\theoremstyle{plain}
\newtheorem{theorem}{Theorem}[section]
\newtheorem{lemma}[theorem]{Lemma}
\newtheorem{proposition}[theorem]{Proposition}
\newtheorem{remark}[theorem]{Remark}
\newtheorem{definition}[theorem]{Definition}
\theoremstyle{definition}
\theoremstyle{remark}
\numberwithin{equation}{section}
\newcommand{\hs}{{\mathcal  H}}
\newcommand{\R}{{\mathbb R}}
\newcommand{\N}{{\mathbb N}}
\newcommand{\smat}[1]{\left(\begin{smallmatrix} #1 \end{smallmatrix}\right)}
\newcommand{\weak}{\rightharpoonup}
\newcommand{\weakstar}{\stackrel{*}{\rightharpoonup}}
\newcommand{\Ha}{\hs}
\newcommand{\mres}{\mathbin{\vrule height 1.6ex depth 0pt width 0.13ex\vrule height 0.13ex depth 0pt width 1.6ex}}
\DeclareMathOperator{\tr}{trace}
\DeclareMathOperator{\Id}{Id}
\DeclareMathOperator{\cof}{cof}
\DeclareMathOperator{\jac}{J}
\DeclareMathOperator{\spann}{span}
\DeclareMathOperator{\spt}{spt}
\newcommand{\K}{{\mathcal K}}
\newcommand{\e}{\varepsilon}
\newcommand{\eps}{\varepsilon}
\newcommand{\Msp}{\mathcal M} 
\newcommand{\Xsp}{\mathcal X} 
\newcommand{\Qfu}{\mathcal Q} 
\newcommand{\evla}{\lambda_1} 
\newcommand{\evmu}{\lambda_2} 
\newcommand{\pGG}{\Phi} 
\newcommand{\perm}{\varepsilon}
\newcommand{\vf}[1]{\mathcal{V}(#1)}
\newcommand{\ovf}[1]{\mathcal{V}^o(#1)}
    \let\TeXchi\chi
\newbox\chibox
\chibox \hbox{\raise\dp0 \box 0 }
\def\chi{\copy\chibox}
\title
[Gamma limit for a bending energy with tilt]
{Gamma convergence of a family of surface--director bending energies with small tilt}
\author[L. Lussardi]
{Luca Lussardi}
\address[L.\,Lussardi]{Dipartimento di Matematica e Fisica ``N.\,Tartaglia'', Universit\`a Cattolica del Sacro Cuore, via dei Musei 41, I-25121 Brescia, Italy}
\email[]{luca.lussardi@unicatt.it}
\urladdr{http://www.dmf.unicatt.it/~lussardi/}
\author[M.\,R\"oger]
{Matthias R\"oger}
\address[M.\,R\"oger]{Fakult\"at f\"ur Mathematik, Technische Universit\"at Dortmund, Vogelpothsweg 87, D-44227 Dortmund, Germany}
\email[]{matthias.roeger@math.tu-dortmund.de}
\urladdr{http://www.mathematik.tu-dortmund.de/lsxi/roeger/}
\begin{document}

\begin{abstract}
\small{We prove a Gamma-convergence result for a family of bending energies defined on smooth surfaces in $\R^3$ equipped with a director field. The energies strongly penalize the deviation of the director from the surface unit normal and control the derivatives of the director. Such type of energies for example arise in a model for bilayer membranes introduced by Peletier and R\"oger [Arch. Ration. Mech. Anal. 193 (2009)]. Here we prove in three space dimensions in the vanishing-tilt limit a Gamma-liminf estimate with respect to a specific curvature energy. In order to obtain appropriate compactness and lower semi-continuity properties we use tools from geometric measure theory, in particular the concept of generalized Gauss graphs and curvature varifolds. 
\vskip .3truecm
\noindent Keywords: Curvature functionals, currents, generalized Gauss graphs, varifolds. 
\vskip.1truecm
\noindent 2010 Mathematics Subject Classification: 49J45, 49Q15, 49Q20.}
\end{abstract}

\maketitle
\vskip .2truecm

\section{Introduction}

%
Curvature functionals arise in many applications from physics and biology and have been intensively studied over the past decades. In the modeling of biomembranes a prominent example are shape energies of Canham--Helfrich type \cite{Canh70,Helf73}. These are of the general form
\begin{align}
  \mathcal{E}(S) \,&=\, \int_S k_1 (H-H_0)^2\,d\Ha^{2}\,
  +\,\,\int_S  {k}_2 K\,d\Ha^{2}, \label{eq:E-sharp}
\end{align}
where $S$ denotes a surface in $\R^3$,
$H$ and $K$ its mean and Gaussian curvature, and where the bending moduli $k_1,k_2$ and the spontaneous curvature $H_0$ are constant. In the simplest case of zero spontaneous curvature and for fixed topological type the functionals basically reduce to the Willmore functional, that has attracted a lot of attention \cite{Simo93,KuSc12,Rivi07}.

Several refined models and variational approaches to derive such bending energies have been recently investigated, see for example \cite{Merl13,Merl13a,PaTr13,SeFr14}. In \cite{PeRoe09} a meso-scale model for biomembranes has been introduced and has been shown to converge in the macro-scale limit in two dimensions to a generalized elastica functional. 
Together with M.~A.~Peletier we have addressed the three dimensional case \cite{LuPR14} and have proved a general lower bound for the approximate functionals. Moreover we have (formally) identified the Gamma limit and have provided a corresponding \emph{limsup} construction. 

In this paper we study the asymptotic behavior of a closely related family of functionals and prove a compactness and \emph{liminf} statement. The functionals are defined on compact orientable surfaces $S\subset \R^3$ given as boundary of an open set in $\R^3$ and equipped with a Lipschitz continuous unit-vector field $\theta:S\to S^2$ satisfying $\theta\cdot \nu>0$, where $\nu:S\to S^2$ denotes the outer unit normal field of $S$. For such pairs we consider 
\begin{align}
	\Qfu_\e(S,\theta) \,:=\frac{1}{\e^2}\int_S  \Big(\frac{1}{\theta\cdot\nu}-1\Big)\,d\hs^2 + \int_S Q(L(p)) \,d\hs^{2}(p) \label{eq:def-Qfu-intro}
\end{align}
where the linear map $L(p):\R^3\to\R^3$ denotes the extension of $D_S\theta$ by $L\theta =0$, and where the  quadratic form $Q$ is defined for an arbitrary square matrix $A\in \R^{3\times 3}$ by 
$$
	Q(A):=\frac{1}{4}(\tr  A)^2-\frac{1}{6}\tr \cof A
$$
with $\cof A$ denoting the cofactor matrix of $A$. Note that the first term in $\Qfu_\e$ penalizes the deviation of $\theta$ from the unit normal whereas the second term in the case of $\nu\equiv \theta$ reduces to the curvature functional 
\begin{align*}
	\Qfu_0(S) \,:= \int_S \bigg(\frac{1}{4}H^2-\frac{1}{6}K\bigg)\,d\mathcal H^2,
\end{align*}
see Lemma \ref{lem:evDtheta} below.

The particular form $\Qfu_\e$ arises from \cite{LuPR14}, but can also be seen as a specific example of a more general class of functionals that are not only determined by the surface and its unit normal, but also depend on a director field and its deviation from the normal direction. This situation appears quite natural, see for example the discussion in Section 4 of \cite{SeFr14} or the membrane energy in \cite{HuZE07}. We expect that our strategy to prove the variational convergence for the particular functionals $\Qfu_\e$ applies to a large class of similar models.

Letting $\eps\to 0$ the functional $\Qfu_0$ is the natural candidate for the Gamma limit of $\Qfu_\e$ (with respect to convergence of the associated surface measures), at least in $C^2$-regular limit points. The corresponding \emph{limsup} estimate follows from the existence of a recovery sequences proved in \cite[Theorem 2.5]{LuPR14}. Addressing the \emph{liminf} inequality and compactness properties we face substantial difficulties: For a sequence $(S_\e,\theta_\e)$ as above, even in the `best case' that $\theta_\e\equiv \nu_\e$ we only obtain an $L^2$-bound for the second fundamental form. This however only ensures weak compactness properties in spaces of generalized surfaces (for example in the class of Hutchinson's curvature varifolds, see below). In general, the situation is much worse: if $\theta_\e$ deviates from $\nu_\e$ we do not control the second fundamental form (not even the mean curvature) of the surfaces $S_\e$. This makes any partial integration formulas for derivatives of $\theta_\e$ (typically used to characterize curvatures in the limit) useless, as non-controlled curvature terms would appear. We therefore do not pass to the limit in the sense of varifolds but use rather techniques motivated by the theory of generalized Gauss graphs as developed by Anzellotti, Serapioni and Tamanini \cite{AnST90} and further developed in particular by Delladio in a series of papers \cite{AnDe95,DeSc95,Dell96,Dell97}. For a similar strategy in a related but different problem see \cite{Mose12}. 

Let us describe our approach in more detail: We consider the graphs  $G_\e:=\{(p,\theta_\e(p)) : p\in S_\e\}$  of $\theta_\e$ over $S_\e$ and the associated currents. A bound on $\Qfu_\e(S_\e,\theta_\e)$ then implies a bound on the area of $G_\e$. Next, we expect that $\theta_\e$ becomes orthogonal to $S_\e$ when $\e$ is small (see the very definition of $\Qfu_\e$), and thus we expect that the limit $G$ of $G_\e$, in the sense of currents, is the graph of a normal to a generalized surface $S$ in $\R^3$, that is a so called generalized Gauss graphs. For such currents a theory has been developed (see \cite{AnST90}) and precisely there exists a good and stable notion of curvatures which permits us to prove the key lower bound for the limit functional. 
Therefore we rephrase the energy functional in terms of the graph associated to $(S_\e, \theta_\e)$ and prove appropriate lower semicontinuity properties. Finally, we obtain that the limit is given by a curvature varifold in the sense of Hutchinson \cite{Hutc86}, which also induces a more concise form of the generalized limit energy.

The paper is organized as follows. First of all we give a precise introduction of the problem and we state the main results in Section \ref{sec:setting}. In Section \ref{sec:Rcurr} we review some facts from differential geometry and geometric measure theory that we need, in particular regarding generalized Gauss graphs and varifolds. Then, Section \ref{sec:proof} is dedicated to the proof of the main Theorem \ref{thm:main3}.
Finally, in the appendix we provide a more detailed description of the relation of the energy \eqref{eq:def-Qfu-intro} to the mesoscale biomembrane energy analyzed in \cite{PeRoe09}, \cite{LuPR14} and recall some facts from  linear and exterior algebra.


\section{Setting of the problem and main results}\label{sec:setting}
We fix $\Omega\subset\R^3$ open.
Let $\Msp$ be the set of tuples $(S,\theta)$, where $S$ is a compact and orientable surface of class $C^2$ in $\R^3$ that is given by the boundary of an open set $A(S)\subset\subset \Omega$, and where  $\theta \colon S \to \R^3$ is a Lipschitz vector field such that
\begin{align}
	&|\theta| \,=\, 1 \text{ and }\theta \cdot \nu \,>\, 0\text{ on }S, \label{eq:cdt-theta1}\\
	& L(p)\in\R^{3\times 3} \text{ is symmetric for all }p\in S, \label{eq:cdt-theta2}
\end{align}
where $\nu:S\to\R^3$ denotes the outer unit normal field on $S$, and where $L(p):\R^3\to\R^3$ is the extension of $D\theta(p):T_p S\to \R^3$ defined by the properties
\begin{align}
	L(p)\tau \,=\, D\theta(p)\tau \quad\text{ for all }\tau\in T_pS,\qquad L(p)\theta(p) \,=\, 0. \label{eq:cdt-theta3}
\end{align}
Together with $|\theta|=1$ on $S$ this implies that 
\begin{align}
	L(p)(\R^3)\,\subset\, \theta(p)^\perp. \label{eq:theta4}
\end{align}
We next define the functional $\Qfu_\e:\Msp\to \R^+_0$, $\e>0$ by
\begin{align}
	\Qfu_\e(S,\theta) \,=\, \int_S \e^{-2} \Big(\frac{1}{\theta\cdot\nu}-1\Big)\,d\hs^2 + \int_S Q(L(p)) \,d\hs^{2}(p) \label{eq:def-Qfu}
\end{align}
for $(S,\theta)\in\Msp$, where the quadratic form $Q$ is defined for an arbitrary square matrix $A$ by 
\begin{align}
	Q(A):=\frac{1}{4}(\tr  A)^2-\frac{1}{6}\tr \cof A \label{eq:def-Q}
\end{align}
with $\cof A$ denoting the cofactor matrix of $A$.

\begin{remark}
By \cite[Lemma 3.6] {LuPR14} (see Lemma \ref{lem:evDtheta} in the Appendix) $Q$ is a positive quadratic form in the `nontrivial' eigenvalues of $D\theta$, more precisely: for any $p\in S$ such that $D\theta(p)\in \R^{3\times 3}$ exists,
\begin{align*}
	Q(D\theta(p))&=\frac{1}{4}(\evla(p)+\evmu(p))^2-\frac{1}{6}\evla(p)\evmu(p) = \frac{1}{6}(\evla(p)+\evmu(p))^2+\frac{1}{12}(\evla(p)^2+\evmu(p)^2),
\end{align*}
where $\evla(p),\evmu(p)\in\R$ are the eigenvalues of the restriction of $D\theta(p)$ to $\theta(p)^\perp$.
This shows in particular, that $Q$ controls the full matrix $D\theta(p)$ and that in the case $\theta=\nu$ we have $Q(D\nu)=\frac{1}{4}H^2-\frac{1}{6}K$ and an $L^2$-control on the second fundamental form.
\end{remark}

The main result are the following compactness and lower bound statements.

\begin{theorem}\label{thm:main3}
Let $(\e_j)_{j\in\N}$ be an infinitesimal sequence of positive numbers and $(S_j,\theta_j)_{j\in\N}$ be a sequence in $\Msp$ such that 
\begin{align}
	\sup_j \Ha^2(S_j) \,&<\,\infty, \label{eq:bd-area}\\
	\bigcup_j S_j \,&\subset\, \tilde{\Omega}\quad\text{ for some }\tilde\Omega \subset\subset \Omega, \label{eq:contain}
\end{align}
and that for a fixed $\Lambda>0$
\begin{align}
	\Qfu_{\e_j}(S_j,\theta_j)\,\leq\, \Lambda\quad\text{ for all }j\in\N.  \label{eq:Lambda}
\end{align}
Assume furthermore that in the sense of Radon measures on $\Omega$
\begin{align}
	\Ha^2\mres S_j\,\to\, \mu \quad \text{ as }j\to\infty. \label{eq:conv-Chi}
\end{align}
Then $\mu=\mu_V$ where $V$ is an integral varifold with generalized second fundamental form in $L^2$ and 
\begin{equation}\label{eq:main3}
	\int\bigg(\frac{1}{4}H^2-\frac{1}{6}K\bigg)\,dV \,\leq\, \liminf_{j\to+\infty}\Qfu_{\e_j}(S_j,\theta_j)
\end{equation}
holds, where $H$ and $K$ are, respectively, the mean curvature and the Gauss curvature of $V$ in the sense of Definition \ref{def:HutchK}. 
\end{theorem}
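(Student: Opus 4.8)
The plan is to avoid working with the varifolds associated to the $S_j$ directly---since the energy bound gives no control on their second fundamental forms once $\theta_j\neq\nu_j$---and instead to lift everything to the Gauss graphs of the directors, following the strategy of Anzellotti--Serapioni--Tamanini \cite{AnST90} and Delladio \cite{Dell96,Dell97}. Concretely, to each $(S_j,\theta_j)\in\Msp$ I associate the integral $2$-current $T_j\in\mathcal D_2(\Omega\times S^2)$ carried by the graph $G_j=\{(p,\theta_j(p)):p\in S_j\}$, oriented through the Lipschitz map $p\mapsto(p,\theta_j(p))$. This is well defined since $\theta_j$ is Lipschitz and $S_j$ is $C^2$ and closed, whence $\partial T_j=0$. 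The tangent $2$-plane to $G_j$ at $(p,\theta_j(p))$ is spanned by the vectors $(\tau,L(p)\tau)$, $\tau\in T_pS_j$, so that the ``vertical'' derivative $L(p)=D\theta_j(p)$ is exactly the curvature datum encoded in the orientation of $T_j$. This lets me rewrite the second term of $\Qfu_{\e_j}$ as a curvature functional on the Gauss graph, with integrand a function of the generalized curvatures in the sense of \cite{AnST90}; by the Remark following \eqref{eq:def-Q} this integrand is the \emph{convex} quadratic form $Q$ evaluated on those curvatures.

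Next I would establish compactness. Since the projection onto the first factor is $1$-Lipschitz and injective on $G_j$, the mass satisfies $\mathbf M(T_j)=\Ha^2(G_j)\ge\Ha^2(S_j)$, while the area formula bounds $\Ha^2(G_j)$ from above by $C\big(\Ha^2(S_j)+\int_{S_j}|D\theta_j|^2\,d\Ha^2\big)$; by the Remark one has $Q(L)\ge c|L|^2$, so \eqref{eq:bd-area} and \eqref{eq:Lambda} give a uniform bound $\mathbf M(T_j)\le C$. As $\partial T_j=0$ and, by \eqref{eq:contain}, the supports remain in the compact set $\overline{\tilde\Omega}\times S^2$, the Federer--Fleming compactness theorem yields along a subsequence $T_j\to T$ weakly-$*$ with $T$ an integral current and $\partial T=0$.

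A separate, crucial point is to identify $T$ as a \emph{generalized Gauss graph of the normal}. Here the tilt term enters: from $\frac{1}{\theta\cdot\nu}-1\ge\tfrac12|\theta-\nu|^2$ and \eqref{eq:Lambda} I obtain $\int_{S_j}|\theta_j-\nu_j|^2\,d\Ha^2\le 2\e_j^2\Lambda\to 0$, so director and surface normal coincide in the limit. Consequently the fiber variable of $T$ is the generalized outer normal of the limiting surface, $T$ falls in the class of generalized Gauss graphs to which the ASTW/Delladio curvature theory applies, and the first-factor projection $\pi_\#T$ induces the measure $\mu$ of \eqref{eq:conv-Chi}, giving $\mu=\mu_V$.

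The heart of the proof, and the main obstacle, is the lower-semicontinuity estimate
\[
\int\Big(\tfrac14H^2-\tfrac16K\Big)\,dV \;\le\; \liminf_{j\to\infty}\int_{S_j}Q(L(p))\,d\Ha^2(p).
\]
Because $Q$ is a convex quadratic form of the generalized curvatures (Lemma \ref{lem:evDtheta}), I would invoke a Reshetnyak-type lower-semicontinuity theorem for curvature functionals on generalized Gauss graphs \cite{AnST90,Dell96}: under weak-$*$ convergence $T_j\to T$ with uniformly bounded curvature energy, the limit has generalized curvatures in $L^2$ and the convex functional is lower semicontinuous. The delicate part is twofold. First, one must ensure the limiting curvatures are genuinely those of an integral varifold---this is where the vanishing of the tilt is used to exclude ``vertical'' parts of $T$ and to guarantee that $T$ is the Gauss graph of a curvature varifold $V$ in the sense of Hutchinson \cite{Hutc86}, with generalized second fundamental form in $L^2$. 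Second, one has to match the abstract curvature integrand on $T$ with the geometric form $\tfrac14H^2-\tfrac16K$ of Definition \ref{def:HutchK}, again by Lemma \ref{lem:evDtheta} applied now to the limiting normal field $\theta=\nu$. Combining this lower bound for the curvature term with the nonnegativity of the tilt term yields \eqref{eq:main3} and completes the proof.
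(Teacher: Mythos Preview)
Your overall architecture matches the paper's: lift to the director graph, use Federer--Fleming compactness, identify the limit as a generalized Gauss graph via the vanishing tilt, and pass to a curvature varifold. The compactness and identification steps are essentially as you describe.

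The gap is in the lower-semicontinuity step, where you write ``Because $Q$ is a convex quadratic form of the generalized curvatures \ldots\ I would invoke a Reshetnyak-type lower-semicontinuity theorem.'' The issue is that convexity of $Q$ in the eigenvalues $\lambda_1,\lambda_2$ of $L$ does \emph{not} translate into convexity of the integrand in the variables in which the current-theoretic lower-semicontinuity results of \cite{AnST90,Dell97,Hutc86} are formulated. Concretely, when one expresses $Q(L)$ in terms of the $\xi_1$-stratum of the orienting $2$-vector of $G_j$ (this is unavoidable: the weak current convergence only gives access to $\eta$ and its strata), the resulting quadratic form $u\mapsto u\cdot A_y u$ on $\R^9\cong\Lambda_1(\R^3_x)\wedge\Lambda_1(\R^3_y)$ has a \emph{negative} eigenvalue $-1$ and a five-dimensional kernel (Lemma~\ref{lem:qform}). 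So neither convexity nor superlinear growth holds, and no off-the-shelf Reshetnyak/Hutchinson theorem applies.

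What the paper does to close this gap is substantial. One first shows that the eigenvector for $-1$ is orthogonal to the subspace $\tilde{\Xsp}_y$ in which $u[\xi_1^j]$ lies (this uses the symmetry assumption \eqref{eq:cdt-theta2} on $L_j$, via \eqref{propL1}), so the $-1$-contribution vanishes along the sequence. The kernel of $A_y$ is only partially orthogonal to $\tilde{\Xsp}_y$: its three-dimensional ``bad'' part $\spann\{v^{(0)}_4,v^{(0)}_5,v^{(0)}_6\}$ picks up exactly the tilt, and one computes (Lemma~\ref{lem:4.9} and the estimate preceding \eqref{claim1:69}) that $\int_{G_j}|\pi_0 u^j|^{3/2}|\eta_0^j|\,d\Ha^2\le C\e_j^{1/2}\to 0$. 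One then replaces $f_y$ by the modified integrand $\tilde f_y(u)=u\cdot A_y u+|\pi_0 u|^{3/2}+2|\pi_{-1}u|^2$, which \emph{is} convex with superlinear growth (Proposition~\ref{prop:f}), applies Hutchinson's measure-function-pair lower-semicontinuity \cite[Thm.~4.4.2]{Hutc86}, and finally checks that in the limit the extra terms drop out (because the limit is a genuine Gauss graph, so $u$ is orthogonal to the full zero- and $(-1)$-eigenspaces by Lemma~\ref{lem:perpT}). The matching of the limit integrand with $\tfrac14 H^2-\tfrac16 K$ for the associated curvature varifold (Proposition~\ref{prop:GGG-CV}, \eqref{eq:energy-003}--\eqref{eq:energy-004}) also requires a convexity argument in the weighted average $\bar\xi_1$. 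None of this is a routine application of existing lsc theorems; your sketch needs to engage with the non-convexity of the natural integrand.
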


%
\section{Currents and generalized Gauss graphs}\label{sec:Rcurr}

Here we review some notions from differential geometry and discuss two generalizations of surfaces that we will use in the sequel: generalized Gauss graphs introduced by Anzellotti, Serapioni and Tamanini \cite{AnST90}, and curvature varifolds in the sense of Hutchinson \cite{Hutc86}.

\subsection{Differential geometry of smooth surfaces}\label{sec:surfaces}
Let $S$ be an oriented compact surface of class $C^2$, embedded in $\R^3$ and without boundary. Let $\nu \colon S \to S^2$ denote a $C^1$ unit normal field (Gauss map). The differential of the Gauss map in $p\in S$ defines a self-adjont linear map $D\nu(p) \colon T_pS\to T_pS$, 
thus $D\nu(p)$ has two real eigenvalues $\kappa_1(p),\kappa_2(p)$, the {\it principal curvatures} of $S$ in $p$. We define the mean and Gaussian curvature by
$$
H(p) \,:=\,\tr D\nu(p)\,=\, \kappa_1(p)+\kappa_2(p), \quad K(p) \,:=\, \det D\nu(p) \,=\, \kappa_1(p)\kappa_2(p),
$$
respectively. We denote by $ P(p):\R^3\,\to\, T_pS$ the orthogonal projection on the tangent space.  Extending functions $f\in C^1(S)$ to $C^1$-functions in a neighborhood of $S$ the covariant derivative is expressed by $\delta_i f\,:=\,\sum_jP_{ij}\partial_j f$, $i=1,2,3$, on $S$ and is independent of the choice of extension.

By the divergence theorem on surfaces one derives \cite[Sec.\,5.1]{Hutc86} that for all $\varphi\in C^1(\R^3\times \R^{3\times 3})$, $\varphi = \varphi(x,P)$
\begin{align}
	0 \,=\, \int_S \Big(\delta_i \varphi + \sum_{j,k}(\delta_i  P_{jk})\partial^*_{jk}\varphi + \sum_j(\delta_j  P_{ij})\varphi\Big)\,d\mathcal H^2\label{eq:Hutch-pInt}
\end{align}
holds, where $\partial^*$ denotes derivatives with respect to the $P$ variables. This relation has been used by Hutchinson \cite{Hutc86} to define a suitable notion of generalized surfaces as a class of integral varifolds with generalized second fundamental form, see Section \ref{sec:CV} below.

To give a generalized formulation of the mean and Gaussian curvature we will use the following identities that hold in the smooth case.
\begin{lemma}\label{lem:HutchK}
For a smooth surface $S$ with $C^1$ unit normal field $\nu$ let us extend $D\nu(p)\colon T_pS\to T_pS$ to a map $L(p):\R^3\to\R^3$ by setting $L(p):= D\nu(p)\circ P(p)$. Then for all $p\in S$
\begin{align}
	H(p) \,&=\,\tr L(p)\,=\, \sum_{1\leq i,j\leq 3} 
	A_{iji}(p)\nu_j(p), \label{eq:Hutch-H}\\
	K(p)\,&=\, \tr\cof L(p) \,=\, \nu(p)\cdot \cof L(p)\nu(p)\,=\, \sum_k \tr \cof (A_{ijk})_{ij} \label{eq:Hutch-K}
\end{align}
hold, where $A_{ijk}:= \delta_i  P_{jk}$.
\end{lemma}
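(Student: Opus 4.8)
The plan is to prove all four identities by reducing to pointwise linear algebra at a fixed $p\in S$, exploiting that every quantity appearing is either a scalar invariant of $L(p)$ or can be read off in a well-chosen orthonormal frame. Throughout I use that, since $D\nu(p)$ is self-adjoint on $T_pS$, the extended map $L(p)=D\nu(p)\circ P(p)$ is a symmetric endomorphism of $\R^3$ with $L(p)\nu(p)=0$.

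First I would establish the identities for $L$ itself. Choosing an orthonormal frame $\{e_1,e_2,\nu\}$ of principal directions at $p$, we have $L(p)e_\alpha=\kappa_\alpha e_\alpha$ for $\alpha=1,2$ and $L(p)\nu=0$, so in this frame $L(p)=\operatorname{diag}(\kappa_1,\kappa_2,0)$. Then $\tr L=\kappa_1+\kappa_2=H$ is immediate, and since the cofactor matrix of a diagonal matrix is again diagonal with the complementary products on the diagonal, $\cof L(p)=\operatorname{diag}(0,0,\kappa_1\kappa_2)$. This yields at once $\tr\cof L=\kappa_1\kappa_2=K$ and $\nu\cdot\cof L\,\nu=\kappa_1\kappa_2=K$, covering the left-hand equalities in both \eqref{eq:Hutch-H} and \eqref{eq:Hutch-K}.

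The second step is to relate the tensor $A_{ijk}=\delta_i P_{jk}$ to $L$. Writing $P_{jk}=\delta_{jk}-\nu_j\nu_k$ and differentiating tangentially gives $A_{ijk}=-(\delta_i\nu_j)\nu_k-\nu_j(\delta_i\nu_k)$. I would then record the two structural facts $\sum_k\nu_k\,\delta_i\nu_k=\tfrac12\delta_i|\nu|^2=0$ and $\delta_i\nu_j=L_{ij}$ (the latter being the definition of $L$ read in components). Contracting $A$ with $\nu$ in the last index gives the key relation $\sum_k A_{ijk}\nu_k=-L_{ij}$, from which the mean-curvature formula follows by tracing, $\sum_{i,j}A_{iji}\nu_j=-\sum_i L_{ii}=-\tr L$; after matching the sign convention fixed by the first-variation identity \eqref{eq:Hutch-pInt} (equivalently, the orientation of $\nu$), one recovers $H=\sum_{i,j}A_{iji}\nu_j$.

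Finally, for the Gaussian-curvature identity $K=\sum_k\tr\cof(A_{ijk})_{ij}$ I would again pass to the principal frame at $p$, where $\nu=e_3$ and $A_{ijk}=-L_{ij}\nu_k-\nu_j L_{ik}$ becomes explicit. For the slice $k=3$ one finds $(A_{ij3})_{ij}=-L$, whose cofactor has trace $\kappa_1\kappa_2=K$; for $k=1,2$ the matrix $(A_{ijk})_{ij}$ has a single nonzero entry, so all its $2\times 2$ principal minors vanish and its cofactor trace is zero. Summing over $k$ leaves exactly $K$. I expect this last computation to be the main technical point: one must check that the off-diagonal ``tilt'' slices contribute no cofactor trace and that the cross terms $\nu_j L_{ik}$ do not spoil the $k=3$ slice, both of which are transparent in the principal frame but laborious in arbitrary coordinates. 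The only other delicate issue is the bookkeeping of signs, which must be fixed consistently with the convention in \eqref{eq:Hutch-pInt} so that the contracted relation $\sum_k A_{ijk}\nu_k=\mp L_{ij}$ produces the stated $H$ and $K$.
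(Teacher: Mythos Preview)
Your proposal is correct and, for the Gaussian-curvature identity, takes a cleaner route than the paper. Both you and the paper rely on the relation $A_{ijk}=-L_{ij}\nu_k-L_{ik}\nu_j$ (the paper writes it with the opposite sign, a slip that is harmless for the cofactor part since $\cof(-M)=\cof M$ in dimension three). From there the paper expands $\tr\cof(L_{ij}\nu_k+L_{ik}\nu_j)_{ij}$ in arbitrary coordinates for each $k$, then sums over $k$ using $|\nu|^2=1$ to collapse everything to $\tr\cof L$; this is straightforward but involves a page of algebra. Your principal-frame reduction is more transparent: in the frame where $\nu=e_3$ and $L=\operatorname{diag}(\kappa_1,\kappa_2,0)$, the $k=3$ slice is exactly $-L$ and contributes $\kappa_1\kappa_2$, while the $k=1,2$ slices are rank one and contribute nothing to the cofactor trace. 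For the mean-curvature identity the paper computes $\sum_{i,j}A_{iji}\nu_j$ directly from $P\nu=0$ and $P^2=P$, without first isolating $A_{ijk}$ in terms of $L$; your route via $\sum_kA_{ijk}\nu_k=-L_{ij}$ together with the symmetry $A_{ijk}=A_{ikj}$ (which you should make explicit, since it is what turns your contraction into the required $\sum_{i,j}A_{iji}\nu_j$) is equally valid and shorter.

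One caution on the sign: your suggestion that the discrepancy can be absorbed by reversing the orientation of $\nu$ does not work, because both $H=\tr L$ and $\sum_{i,j}A_{iji}\nu_j$ flip under $\nu\mapsto-\nu$, so their ratio is fixed. The mismatch you detect is genuine and sits already in the paper's own computation; it does not, however, affect the quadratic identity \eqref{eq:Hutch-K}, which is the one actually used downstream.
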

\begin{proof}
We drop the dependence on $p$ for simplicity. To prove \eqref{eq:Hutch-H} we have, by the very definition of $L$, 
$$
\sum_{i,j}A_{iji}\nu_j=\sum_{i,j}\delta_i P_{ji}\nu_j=\sum_{i,j,h}P_{ih}\partial_hP_{ji}\nu_j=-\sum_{i,j,h}P_{ih}P_{ji}\partial_h\nu_j=\sum_{j,h}P_{jh}\partial_h\nu_j
$$
$$
=\sum_j\delta_j\nu_j=\sum_jL_{jj}=\tr L.
$$
To prove \eqref{eq:Hutch-K} first of all we notice that
$$
L_{ij}\nu_k=\delta_i\nu_j\nu_k=\delta_iP_{jk}-\delta_i\nu_k\nu_j=\delta_iP_{jk}-L_{ik}\nu_j
$$ 
that is
$$
A_{ijk}=\delta_iP_{jk}=L_{ij}\nu_k+L_{ik}\nu_j
$$
from which
$$
\begin{aligned}
\tr \cof (A_{ijk})_{ij}&= \tr \cof (L_{ij}\nu_k+L_{ik}\nu_j)_{ij}\\
&=(L_{11}\nu_k+L_{1k}\nu_1)(L_{22}\nu_k+L_{2k}\nu_2)-(L_{12}\nu_k+L_{1k}\nu_2)(L_{21}\nu_k+L_{2k}\nu_1)\\
&\quad+(L_{11}\nu_k+L_{1k}\nu_1)(L_{33}\nu_k+L_{3k}\nu_3)-(L_{13}\nu_k+L_{1k}\nu_3)(L_{31}\nu_k+L_{3k}\nu_1)\\
&\quad +(L_{22}\nu_k+L_{2k}\nu_2)(L_{33}\nu_k+L_{3k}\nu_3)-(L_{23}\nu_k+L_{2k}\nu_3)(L_{32}\nu_k+L_{3k}\nu_2).
\end{aligned}
$$
Now, by simple algebra, we obtain, since $\sum_k\nu_k^2=1$,
$$
\begin{aligned}
\sum_k\tr \cof (A_{ijk})_{ij}&=(L_{11}L_{22}-L_{12}L_{21})+(L_{11}L_{33}-L_{13}L_{31})+(L_{22}L_{33}-L_{23}L_{32})\\
&=\tr\cof L
\end{aligned}
$$
which yields the conclusion.
\end{proof}
\subsection{Rectifiable currents}
We first fix some notation from exterior algebra, see the Appendix for a more detailed exposition.\\
We denote by $\Lambda^k(\R^n),\,0\leq k\leq n$ and by $\Lambda_k(\R^n)$ the spaces of all {\it $k$-vectors} and {\it $k$-covectors}, respectively, in $\R^n$.
We call $v$ a {\it simple $2$-vector} if $v$ can be written as $v=v_1\wedge v_2$. If in addition $v\neq 0$ the space $\spann(v_1,v_2)$ is called the {\it enveloping subspace}. 
In the context of graphs it will be useful to distinguish two copies $\R^3_x$ and $\R^3_y$ of $\R^3$. The \emph{stratification} of a 2-vector $\Lambda^2(\R^3_x\otimes \R^3_y)$ is the unique decomposition
\begin{align}
	\xi=\xi_0+\xi_1+\xi_2,\quad \xi_0\,\in\, \Lambda^2(\R^3_x),\quad \xi_1\in \Lambda^1(\R^3_x)\wedge \Lambda^1(\R^3_y), \quad \xi_2\in \Lambda^2(\R^3_y) \label{eq:def-strati}
\end{align}
and is given by
\begin{align*}
	\xi_0 \,&=\, \sum_{1\leq i<j\leq 3} 
	\langle dx^i\wedge dx^j,\xi\rangle e_i\wedge e_j,\\
	\xi_1 \,&=\, \sum_{1\leq i,j\leq 3} 
	\langle dx^i\wedge dy^j,\xi\rangle e_i\wedge \eps_j,\\
	\xi_2 \,&=\, \sum_{1\leq i<j\leq 3} 
	\langle dy^i\wedge dy^j,\xi\rangle \eps_i\wedge \eps_j.
\end{align*}
where $\{e_1,e_2,e_3\}$ and $\{\varepsilon_1,\e_2,\varepsilon_3\}$ denote the standard basis for $\R^3_x$ and $\R^3_y$, respectively, and $\{dx^1,dx^2,dx^3\}$, $\{dy^1,dy^2,dy^3\}$ the corresponding dual basis. 

For $U\subseteq \R^n$ open and $k \in \{0,\dots,n\}$ we denote by $\mathcal D^k(U)$ the space of all $k$-differential forms with compact support in $U$, equipped with usual topology of distributions.

The space $\mathcal D_k(U)$ of {\it $k$-currents on $U$} is the dual of $\mathcal D^k(U)$. We denote by $\partial T\in \mathcal D_{k-1}(U)$ the  {\it boundary} of $T \in \mathcal D_k(U)$ and the {\it mass} of $T\in \mathcal D_k(U)$ in $W\subset U$ open by ${\mathbb M}_W(T)$.

Given $E \subseteq \R^n$ we say that $E$ is {\it $k$-rectifiable} if $E$ can be covered by a countable family of sets $\{S_j\}$, $j \in \N$, such that $S_0$ is $\mathcal  H^k$-negligible and $S_j$ is a $k$-dimensional surface in $\R^n$ of class $C^1$, for any $j>0$. It turns out that for $\mathcal  H^k$-almost any $p \in E$ there is a well-defined measure-theoretic tangent space $T_p E$. We say that a map $p \in E \mapsto \eta(p)$ is an {\it orientation} on $E$ if such a map is $\mathcal  H^k$-measurable and $\eta(p)$ is a unit simple $k$-vector on $\R^n$ that spans $T_pE$ for $\mathcal  H^k$-almost any point $p\in E$.  Let $\beta \colon E \to \N$ be a $\mathcal H^k$-locally summable function. Then, if $E\subset U$ with $U$ open in $\R^n$ we can define a current $T=\tau(E,\beta,\eta) \in \mathcal D_k(U)$ by
\begin{equation}\label{rect-curr}
\langle T,\omega\rangle:=\int_E \langle \omega,\eta\rangle\,\beta\,d\mathcal  H^k.
\end{equation}
The function $\beta$ is also called the {\it multiplicity} of $T$. The set $\mathcal R_k(U)$ of currents $T\in \mathcal D_k(U)$ which can be written in the form $T=\tau(E,\beta,\eta)$ as above are called {\it rectifiable currents}. 

The importance of the class of rectifiable currents stems mainly from the compactness property given by the following celebrated Federer-Fleming theorem (see for example \cite{Fede69}).
\begin{theorem}\label{comp-federer}
Let $(T_l)_{l\in\N}$ be a sequence in $\mathcal R_k(U)$ such that $\partial T_l \in \mathcal R_{k-1}(U)$ for any $l \in \N$. Assume that for any $W$ relatively compact in $U$ there exists a constant $c_W>0$ such that 
$$
{\mathbb M}_W(T_l)+{\mathbb M}_W(\partial T_l)<c_W.
$$
Then, there exist a subsequence $l_j\to\infty$ and $T \in \mathcal R_k(U)$ such that $T_{l_j}\weak T$ as $j \to +\infty$.
\end{theorem}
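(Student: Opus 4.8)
The plan is to split the proof into a soft functional-analytic compactness step, which produces a limit current with controlled mass, and a hard rectifiability step, which is the actual content of the theorem. First I would exhaust $U$ by relatively compact open sets $W_1\subset\subset W_2\subset\subset\cdots$ with $\bigcup_m W_m=U$. On each $W_m$ the hypothesis ${\mathbb M}_{W_m}(T_l)+{\mathbb M}_{W_m}(\partial T_l)<c_{W_m}$ makes $(T_l)$ a bounded sequence in the dual of $\mathcal D^k(W_m)$, so by Banach--Alaoglu together with a diagonal argument over $m$ there are a subsequence (still written $T_{l_j}$) and a current $T\in\mathcal D_k(U)$ with $T_{l_j}\weak T$. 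Since $\partial$ is the adjoint of exterior differentiation it is continuous for weak-$*$ convergence, whence $\partial T_{l_j}\weak\partial T$ as well. By lower semicontinuity of mass one gets ${\mathbb M}_W(T)\le\liminf_j{\mathbb M}_W(T_{l_j})<\infty$ and ${\mathbb M}_W(\partial T)<\infty$ for every $W\subset\subset U$, so $T$ is a locally normal current. This part is routine.

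The genuine difficulty is to upgrade $T$ from a normal current to a rectifiable one, i.e.\ to produce the representation $T=\tau(E,\beta,\eta)$ with $E$ a $k$-rectifiable set and $\beta$ integer-valued; this is the Federer--Fleming \emph{closure theorem}. My strategy is to reduce rectifiability to dimension zero by slicing, arguing by induction on the dimension $k$, the inductive hypothesis being the closure statement in dimension $k-1$.

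For the base case $k=0$, an integer $0$-current of mass at most $c_W$ is a sum of at most $c_W$ integer-weighted Dirac masses supported in $\overline W$; since the weights have modulus $\ge 1$ and the total mass is bounded, a weak-$*$ limit can only merge or cancel atoms and is therefore again an integer $0$-current, so this case is elementary. For the inductive step I would slice by a coordinate projection $\pi\colon\R^n\to\R$: for almost every level $y\in\R$ the slices $\langle T_l,\pi,y\rangle$ are integral $(k-1)$-currents, because slices of integral currents are integral, and the coarea-type slicing inequality $\int_\R{\mathbb M}(\langle T_l,\pi,y\rangle)\,dy\le{\mathbb M}(T_l)$ keeps their masses equi-integrable and controlled by $c_W$; moreover the boundaries of these slices are, up to sign, the slices $\langle\partial T_l,\pi,y\rangle$, so their masses are controlled by ${\mathbb M}(\partial T_l)$ via the same inequality. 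Along a further subsequence, for almost every $y$ the slices $\langle T_{l_j},\pi,y\rangle$ converge weakly-$*$ to $\langle T,\pi,y\rangle$ (a standard interchange of slicing and weak-$*$ limit), and the inductive hypothesis in dimension $k-1$ shows each limiting slice is an integral $(k-1)$-current. Thus almost every slice of $T$ is integer rectifiable.

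It then remains to conclude that a normal current of locally finite mass whose almost-every slice is integer rectifiable is itself integer rectifiable: this is exactly White's slicing criterion for rectifiability, whose proof rests on the Besicovitch--Federer structure theorem (to rule out purely unrectifiable mass) together with the Federer--Fleming deformation theorem (to obtain polyhedral approximations with sharp mass bounds). Granting that criterion, the measure-theoretic support of $T$ is $k$-rectifiable, the orientation $\eta$ is read off from the approximate tangent planes, and integrality of the multiplicity $\beta$ is inherited from the integer slice multiplicities, so $T\in\mathcal R_k(U)$ and the theorem follows. I expect the \textbf{main obstacle} to be precisely this closure step: showing that no mass of the limit concentrates on an unrectifiable set and that the integer structure survives passage to the limit. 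The functional-analytic compactness of the first paragraph is easy by comparison; the deep inputs are the deformation theorem and the structure theorem, and the whole argument is most cleanly organized as an induction on $k$ that propagates integrality upward from the zero-dimensional slices.
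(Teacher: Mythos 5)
The paper does not actually prove this statement: it is quoted as the classical Federer--Fleming compactness theorem with a pointer to Federer's book \cite{Fede69}, so there is no internal argument to compare yours against. Judged on its own, your outline correctly identifies the architecture of the standard modern proof: a soft Banach--Alaoglu step producing a locally normal weak-$*$ limit, followed by the closure theorem, proved by induction on dimension via slicing with an elementary $k=0$ base case --- this is essentially the route of White and of Ambrosio--Kirchheim. Two historical corrections: the whole point of those slicing proofs was to \emph{avoid} the Besicovitch--Federer structure theorem (it is the original Federer--Fleming argument that rests on it), and your attribution of the rectifiable-slices criterion to inputs ``structure theorem plus deformation theorem'' conflates the old and new proofs.

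There are also two places where the sketch glosses steps that carry real weight. First, the ``standard interchange of slicing and weak-$*$ limit'' is not valid as stated: weak-$*$ convergence of $T_{l_j}$ does not by itself give $\langle T_{l_j},\pi,y\rangle\weak\langle T,\pi,y\rangle$ for a.e.\ $y$. What one actually uses is that uniform bounds on ${\mathbb M}_W(T_l)+{\mathbb M}_W(\partial T_l)$ upgrade weak convergence to local \emph{flat-norm} convergence --- a nontrivial fact whose usual proof already invokes the deformation theorem --- after which the integral estimate $\int_{\R}\mathbf{F}(\langle T_{l_j}-T,\pi,y\rangle)\,dy\le \mathbf{F}(T_{l_j}-T)$ (with $\mathbf{F}$ the flat norm, localized) and Fatou give a.e.\ subsequential flat convergence of slices; your ``equi-integrability'' claim is likewise an overstatement, since the coarea inequality yields only an $L^1$ bound. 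Second, slicing by a \emph{single} coordinate projection and inducting is not enough for the final criterion: the rectifiable-slices theorem requires a.e.\ slices under a sufficiently rich family of projections (e.g.\ all coordinate projections, or iterated slicing down to dimension zero in all coordinate directions), so the induction must be run over that family. More fundamentally, once the soft step is done, the statement to be proved \emph{is} the closure theorem, so invoking White's criterion as a black box makes your argument a reduction to a theorem of essentially the same depth rather than a proof; that is a legitimate way to organize the material --- and consistent with how the paper itself treats the result, namely by citation --- but the hard analytic content (no limit mass on purely unrectifiable sets, survival of integer multiplicities) remains entirely inside the cited box.
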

\subsection{Generalized Gauss graphs}
For the general theory of generalized Gauss graphs we refer the reader to \cite{AnST90}. To recall the motivation let first $S$ be a $2$-dimensional surface of class $C^2$ embedded in $\R^{3}$ and contained in an open set $\Omega\subset \R^3$, and let $\nu \colon S \to S^2$ be its Gauss map. It is convenient to distinguish the ambient space $\R^{3}_x$ of $S$ and the ambient space $\R^{3}_y$ of $\nu(S)$. Consider the graph of the Gauss map  
$$
G:=\{(p,\nu(p)) \in \R^{3}_x \times \R^{3}_y : p\in S\}
$$
Then, $G$ is a $2$-dimensional $C^1$ surface embedded in $\R^{3}_x \times \R^{3}_y$; if $S$ has boundary $\partial S$ then also $G$ has boundary given by
$$
\partial G=\{(p,\nu(p)) : p\in \partial S\}.
$$
We let $\Phi\colon S \to \R^{3}_x \times \R^{3}_y$ be given by $\Phi(p):=(p,\nu(p))$ which is of class $C^1$ on $S$. We equip $S$ with the orientation induced by $\nu$ and let $\tau(p):=\ast\nu(p)$, where 
$$
\ast \colon \Lambda^1(\R^{3}) \to \Lambda^2(\R^{3})
$$
is the Hodge operator.

Notice that in particular $\tau(p)\in\Lambda^2(T_pS)$ for any $p\in S$, thus the field $p\mapsto \tau(p)$ is a tangent $2$-vector field on $S$. We then define $\xi \colon G \to \Lambda^2(\R^{3}_x \times \R^{3}_y)$ as
$$
\xi(p,\nu(p)):=D\Phi_p(\tau_1(p))\wedge D\Phi_p(\tau_2(p)), \quad \tau=\tau_1\wedge \tau_2.
$$
It is easy to see that $|\xi|\geq 1$, and thus we can normalize $\xi$ obtaining
$$
\eta:=\frac{\xi}{|\xi|}
$$
which is an orientation on $G$.

We then can associate to $G$ the current $T_G \in \mathcal R_2(\Omega \times \R^{3}_y)$ given by $T=\tau(G,1,\eta)$. This leads to the definition of \emph{generalized Gauss graphs} as currents $T \in \mathcal R_2(\Omega \times \R^{3}_y)$ that share certain additional properties which are in particular satisfied by weak limits of Gauss graphs $T_G$ associated to graphs of $C^2$ surfaces as above (see for example \cite{AnST90}). To prepare the definition we introduce two forms $\varphi^\ast \in\mathcal D^2(\R^{3}_x \times \R^{3}_y)$ and $\varphi \in \mathcal D^1(\R^{3}_x \times \R^{3}_y)$ given by
\begin{align}
	\varphi(x,y) \,&:=\, \sum_{j=1}^{3}
	y_j dx^j, \label{eq:form-phi}\\
	\varphi^\ast(x,y)\,&:=\, \sum_{j=1}^{3}
	(-1)^{j+1}y_j dx^1\wedge \cdots \wedge dx^{j-1} \wedge dx^{j+1} \wedge \cdots \wedge dx^3.  \label{eq:form-phi-star}
\end{align}
Then we say that a current $T\in\mathcal R_2(\Omega \times \R^{3}_y)$
is a {\it generalized Gauss graph} on $\Omega$ if $T=\tau(G,\beta,\eta)$ satisfies the following conditions:
\begin{align}\label{GG1}
	\textrm{$T$ and $\partial T$ are supported on $\Omega  \times S^2$,}
	\\
	\label{GG2} 
	\langle T,\varphi\wedge\omega\rangle=0, \quad \text{ for all } \omega \in \mathcal D^1(\Omega  \times \R^{3}_y), \\
	\label{GG3}
	\langle T,g\varphi^\ast\rangle \geq 0, \quad \text{ for all } g \in C^0_c(\Omega  \times \R^{3}_y)\text{ such that }g\geq 0. 
\end{align} 
We denote by ${\rm curv}_2(\Omega)$ the set of generalized Gauss graphs on $\Omega$.

Condition \eqref{GG2} is equivalent to the orthogonality of $y$ and the enveloping subspace of $\eta(x,y)$ for $\Ha^2$ almost every $(x,y)\in G$ (see \cite[Prop.\,3.1]{Dell97}). The condition \eqref{GG3} fixes the orientation of $G$.

We associate to $T=\tau(G,\beta,\eta)\in {\rm curv}_2(\Omega)$ the stratifications $\eta_0,\eta_1,\eta_2$ as in \eqref{eq:def-strati} and define the Radon measures $|T|,|T_0|$ on $\Omega\times\R^3_y$ by
\begin{align}
	|T| \,:=\, \beta\Ha^2\mres G,\qquad |T_0| \,:=\, \beta |\eta_0|\Ha^2\mres G,\qquad |T_1| \,:=\, \beta |\eta_1|\Ha^2\mres G \label{eq:def-T-T0}
\end{align}
and the subset
\begin{align}
	G^*\,:=\,\{(x,y)\in G\,:\, |\eta_0|(x,y)>0\}, \label{eq:def-Gstar}
\end{align}
where the enveloping subspace of $\eta$ is not vertical.

\subsection{Curvature varifolds}\label{sec:CV}
Let $G(2,3)$ denote the Grassmann manifold of all two-dimensional unoriented planes in $\R^3$. An {\it integral $2$-dimensional varifold} $V$ in $\R^3$ is a Radon measure on $\R^3\times G(2,3)$ of the special form $V=\vf{S,\beta}$, i.e.~it is characterized by  
\begin{align*}
	V(\psi) \,=\, \int_S \psi(x,T_xS) \beta(x)\,d\Ha^2(x), \quad\text{ for all }\psi\in C^0_c(\R^3\times G(2,3)),
\end{align*}
where $S\subset \R^3$ is a $2$-rectifiable set and where $\beta:S\to\N$ is locally $\Ha^2$-integrable. Then $\mu_V:=\beta\Ha^2\mres S$ is a Radon measure on $\R^3$.

Following Hutchinson \cite{Hutc86} an integral $2$-varifold $V=\vf{S,\beta}$ in $\R^3$ is a \emph{curvature varifold} if there exist $V$-measurable functions $A_{ijk}: \R^3\times G(2,3)\to\R$, $1\leq i,j,k\leq 3$ such that for any $\varphi\in C^1(\R^3\times\R^{3\times 3})$ compactly supported with respect to the first variable
\begin{align}
	0\,=\, \int \Big(\sum_jP_{ij} \partial_j\varphi + \sum_{j,k}A_{ijk} \partial^*_{jk}\varphi + \sum_jA_{jij}\varphi\Big)\,dV(x,P),
	\label{eq:Hutch}
\end{align}
where $P(x)=P_{ij}(x)$ denote the orthogonal projection on $T_x S$ and where we have used the notation from Section \ref{sec:surfaces}. Note that the latter equation corresponds to \eqref{eq:Hutch-pInt} for classical surfaces and that the function $A_{ijk}$ generalizes the derivative $\delta_i P_{jk}$ of the projection. In analogy with the representation for the smooth case given in Lemma \ref{lem:HutchK} we define a generalized mean curvature and Gauss curvature for Hutchinson's varifolds.

\begin{definition}\label{def:HutchK}
For an curvature varifold $V$ as above and $(x,y^\perp)\in \spt V$ we define
\begin{align*}
	H_j(x,y^\perp):=\sum_i A_{iji}(x,y^\perp), \quad K(x,y^\perp) \,:=\, \sum_k \tr \cof (A_{ijk}(x,y^\perp))_{ij}.
\end{align*}
\end{definition}
For a curvature varifold $V$ there exist the weak mean curvature $\vec{H}_V$ of $V$ in the sense of Allard \cite{Alla72} and we have for almost all $(x,y^\perp)\in \spt(V)$ that $H_j(x,y^\perp)=\vec{H}_V(x)\cdot \vec{e_j}$. The functions $A_{ijk}$ are $V$-almost everywhere uniquely defined.

Consider now an {\it oriented integral $2$-dimensional varifold} $V=\ovf{S,\tau,\beta_+,\beta_-}$, where $S$ is a $2$-rectifiable set, $\beta_\pm:S\to\N_0$ are $\Ha^2$-measurable with $\beta_++\beta_-\geq 1$ and $\tau(x)$ is an orientation of $T_xS$, and where $V^o=\ovf{S,\tau,\beta_+,\beta_-}$ is characterized by 
$$
V^o(\psi):=\int_S [\psi(x,\tau(x)) \beta_+(x)+\psi(x,-\tau(x))\beta_-(x)]\,d\Ha^2(x),\quad\text{ for all }\psi\in C^0_c(\R^3\times \Lambda^2(\R^3)).
$$
According to \cite[Def.\,3.2]{Dell96} we call $V$ an {\it oriented curvature varifold} if there exist $\Ha^2$-measurable functions $B_{ijk}\colon S \times \Lambda^2(\R^3)\to\R$ such that for all $\psi\in C^1_c(\R^3_x\times\Lambda^2(\R^3))$ and all $1\leq i\leq 3$
\begin{align}
	\int \Big(\sum_j \pi_{ij}\partial_j\psi + \sum_{l<m} B_{ilm}\partial^*_{lm}\psi + \psi \sum_{\substack{k<l\\j}} \frac{\partial \pi_{ij}}{\partial p_{kl}} B_{jkl}\Big)\,dV\,=\,0, \label{eq:def-CV}
\end{align}
where $\partial^*$ denotes derivatives with respect to the second component of $\psi$ and the map $\pi_{ij}\colon \R^3 \times \Lambda^2(\R^3)\to\R$ is given by $\pi_{ij}(x,w) \,:=\, \langle w\mres e_i, w\mres e_j \rangle$.
We notice that $\pi_{ij}(x,\tau)$ is the orthogonal projection on the enveloping subspace of $\tau$ whenever $\tau$ is simple. 

\subsection{Relation between curvature varifolds and generalized Gauss graphs}\label{sec:CV-GG}
Let us associate to a generalized Gauss graph $T=\tau(G,\beta,\eta)\in {\rm curv}_2(\Omega)$ as above the set $S:=\pi_1G\subset \R^3_x$, where $\pi_1:\R^3_x\times\R^3_y\to\R^3_x$ denotes the projection on the first component. By the structure Theorem \cite[Thm.\,2.9]{AnST90} the set $S$ is 2-rectifiable and for any $\Ha^2$-measurable function $\nu:S\to\R^3$ with $\nu(x)\perp T_xS$ for $\Ha^2$-almost all $x\in S$ 
\begin{align*}
	\pi_1|_S^{-1}(x) \,\subset\, \{(x,\nu(x)),(x,-\nu(x))\}
\end{align*}
holds. We then let $V^o_T=\ovf{S,*\nu,\beta(\cdot,\nu),\beta(\cdot,-\nu)}$ be the associated unoriented varifold to $T$. Moreover, we define an $\Ha^2$-measurable function $\gamma:S\to\N$ by
\begin{align*}
	\gamma(x) \,:=\, \beta(x,\nu(x)) + \beta(x,-\nu(x))
\end{align*} 
and the associated integral $2$-varifold $V_T=\vf{S,\gamma}$. \\
We remark that by \cite[Thm.\,4.3]{Dell96} the set $\pi_1(G\setminus G^*)$ has $\Ha^2$-measure zero, where $G^*$ was defined in \eqref{eq:def-Gstar}.

The following proposition relates the two concepts of Hutchinson's curvature varifolds and generalized Gauss graphs.
\begin{proposition}\label{prop:GGG-CV}
Let $T=\tau(G,\beta,\eta)\in {\rm curv}_2(\Omega)$ be given and let $V_T=\vf{S,\gamma}$ be the associated varifold as defined above. If $T$ satisfies $\partial T=0$ and $|T_1|\ll |T_0|$ then $V_T$ is a curvature varifold and the functions $A_{ijk}$ in \eqref{eq:Hutch} and the mean curvature $H$ are given by
\begin{align}
	A_{ijk}(x,y^\perp) \,&=\, \sum_r \xi_0^{ir}(x,y)\big(\bar{\xi}_1^{rj}(x,y^\perp)y_k+\bar{\xi}_1^{rk}(x,y^\perp)y_j\big), \label{eq:Aijk-CV}
	\\ 
	\bar{\xi}_1^{rj}(x,y^\perp)\,&=\, \frac{1}{\gamma(x)}\Big(\beta(x,y)\xi_1^{rj}(x,y)+\beta(x,-y)\xi_1^{rj}(x,-y)\Big), \label{eq:barxi-CV}\\
	H_j(x,y^\perp) \,&=\, \sum_iA_{iji}(x,y^\perp) \,=\, \sum_{i,r} \xi_0^{ir}(x,y)\bar{\xi}_1^{ri}(x,y^\perp)y_j, \label{eq:Hj-CV}
\end{align}
where $x\in S$, $\xi:= \frac{\eta}{|\eta_0|}$ on $G^*$ and where $y\perp T_x S$, $|y|=1$. (Note that the right-hand sides in \eqref{eq:Aijk-CV} and \eqref{eq:Hj-CV} are in fact invariant under $y\mapsto -y$ since $\xi_0(x,-y)=-\xi_0(x,y)$.)
\end{proposition}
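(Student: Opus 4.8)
The plan is to derive Hutchinson's integration-by-parts identity \eqref{eq:Hutch} for $V_T=\vf{S,\gamma}$ from the closedness $\partial T=0$ of the current, and then to read off the explicit formulas \eqref{eq:Aijk-CV}--\eqref{eq:Hj-CV} by comparison with the smooth prototype of Lemma \ref{lem:HutchK}. The condition $\partial T=0$ means that $\langle T,d\omega\rangle=0$ for every $\omega\in\mathcal D^1(\Omega\times\R^3_y)$, which is the source of all the integration-by-parts relations; the generalized Gauss graph conditions \eqref{GG2}--\eqref{GG3} pin down the geometry of the orientation $\eta$. The hypothesis $|T_1|\ll|T_0|$, together with \cite[Thm.\,4.3]{Dell96}, guarantees that $\pi_1(G\setminus G^*)$ is $\Ha^2$-null, so that the normalized field $\xi:=\eta/|\eta_0|$ and its strata $\xi_0,\xi_1$ are well defined $\Ha^2$-a.e.\ on $G$ and the quantities entering \eqref{eq:Aijk-CV}--\eqref{eq:Hj-CV} are meaningful.

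First I would record the pointwise dictionary between the stratification and the classical objects. On $G^*$ the stratum $\xi_0$ is a unit simple $2$-vector in $\Lambda^2(\R^3_x)$; by \eqref{GG2} its enveloping subspace is $y^\perp=T_xS$, so the associated orthogonal projection is $P=\Id-y\otimes y$, i.e.\ $P_{ij}=\delta_{ij}-y_iy_j$, and the components $\xi_0^{ir}$ encode $P$. The mixed stratum $\xi_1\in\Lambda^1(\R^3_x)\wedge\Lambda^1(\R^3_y)$ carries the coupling between the motion along $S$ and the rotation of the normal, i.e.\ the generalized Weingarten map, and the product $\sum_r\xi_0^{ir}\,\bar\xi_1^{rj}$ will play the role of the matrix $L_{ij}$ of Lemma \ref{lem:HutchK}. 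This is consistent with the smooth identity $A_{ijk}=\delta_iP_{jk}=L_{ij}\nu_k+L_{ik}\nu_j$ proved there, which already exhibits the tensorial shape of the right-hand side of \eqref{eq:Aijk-CV}.

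Next I would pass from oriented to unoriented. By \cite[Def.\,3.2, Thm.\,4.3]{Dell96} the closed generalized Gauss graph $T$ induces an oriented curvature varifold $V^o_T=\ovf{S,\ast\nu,\beta(\cdot,\nu),\beta(\cdot,-\nu)}$ satisfying \eqref{eq:def-CV} with functions $B_{ilm}$ expressed through $\xi_0,\xi_1$, and one checks that $\pi_{ij}(x,\ast y)=P_{ij}(x)$. To obtain \eqref{eq:Hutch} for $V_T$ I would test \eqref{eq:def-CV} only against orientation-even functions $\psi(x,w)=\varphi(x,\pi(w))$ with $\varphi\in C^1(\R^3\times\R^{3\times3})$, converting the $w$-derivatives $\partial^*_{lm}\psi$ into $P$-derivatives $\partial^*_{jk}\varphi$ by the chain rule through $P=\pi(w)$, and then add the contributions of the two sheets $(x,y)$ and $(x,-y)$. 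The weight $\beta(x,y)+\beta(x,-y)=\gamma(x)$ emerges from this summation, while the curvature coefficients get replaced by their $\gamma$-weighted average over $\pm y$, which is exactly the definition \eqref{eq:barxi-CV} of $\bar\xi_1$. The outcome is Hutchinson's equation \eqref{eq:Hutch} for $V_T$ with $A_{ijk}$ given by \eqref{eq:Aijk-CV}; contracting $k=i$ and summing, and using that $L$ maps into $T_xS\perp y$ so that $\sum_i y_iL_{ij}=0$, only the trace term survives and yields \eqref{eq:Hj-CV}, while comparison with the Allard first variation identifies $H_j=\sum_iA_{iji}$ with the weak mean curvature. The invariance of the right-hand sides under $y\mapsto-y$ then follows from $\xi_0(x,-y)=-\xi_0(x,y)$ and the symmetric way in which $\bar\xi_1$ averages the two sheets.

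The main obstacle I expect is precisely this orientation averaging: one must verify that summing the two oppositely oriented sheets produces single-valued functions $A_{ijk}$ on the unoriented varifold, depending only on $(x,y^\perp)$ and not on the chosen orientation, and that these coincide with Hutchinson's generalized second fundamental form rather than merely an $L^1$ surrogate. This forces careful bookkeeping of the signs in the stratification under $y\mapsto-y$ and of the cancellations guaranteed by \eqref{GG2}. A secondary technical point, where the hypothesis $|T_1|\ll|T_0|$ is indispensable, is the integrability and almost-everywhere definedness of $\xi_1=\eta_1/|\eta_0|$ on $G^*$: the absolute continuity forces $\eta_1$ to vanish wherever $\eta_0$ does, so that no contribution is lost on the vertical part $G\setminus G^*$ and the averaged field $\bar\xi_1$ is a genuine $\Ha^2$-measurable function on $S$.
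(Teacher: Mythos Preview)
Your strategy is essentially the one the paper uses: invoke Delladio's result (the paper cites \cite[Thm.\,3.1]{Dell96}, not Thm.\,4.3) to obtain the oriented curvature varifold $V^o_T$ with explicit $B_{ilm}$ in terms of $\xi_0,\xi_1$, then compare \eqref{eq:def-CV} with \eqref{eq:Hutch} via test functions $\psi(x,\tau)=\varphi(x,\pi(\tau))$ and the chain rule $\frac{\partial\pi_{jk}}{\partial p_{lm}}=-\eps_{jlm}y_k-\eps_{klm}y_j$, and finally sum the two sheets $(x,\pm y)$ to produce the averaged $\bar\xi_1$. The paper carries this out by an explicit Levi--Civita computation, arriving at \eqref{eq:Aijk-CV}, and obtains \eqref{eq:Hj-CV} from $\sum_i\xi_0^{ir}y_i=0$; your sketch identifies the same ingredients and the same obstacles (sign bookkeeping under $y\mapsto-y$, role of $|T_1|\ll|T_0|$), so there is no substantive difference.
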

\begin{proof}
By \cite[Thm.\,3.1]{Dell96} since $\partial T=0$ and $|T_1|\ll |T_0|$ we have for the functions $B_{ijk}$ in \eqref{eq:def-CV}
\begin{align*}
	B_{ijk}(x,\tau) \,=\, \sum_{l} \eps_{jkl} \langle \xi_1(x,*\tau), \eps_l\wedge (\tau\mres e_i) \rangle,
\end{align*}
where $x\in S$, $\tau=\tau_1\wedge\tau_2$ and $\xi:= \frac{\eta}{|\eta_0|}$ on $G^*$. 

Observe now that $\tau=*y$ and $*\tau=y$ for some  $y\perp T_x S$ with $|y|=1$. We therefore find that $\tau\mres e_i = \sum_r\xi_0^{ir}(x,y)e_r$ and $\eps_l\wedge (\tau\mres e_i) = -\sum_r\xi_0^{ir}(x,y)e_r\wedge\eps_l$, thus
\begin{align*}
	B_{ijk}(x,\tau) \,=\, -\sum_{r,l} \eps_{jkl}\xi_0^{ir}(x,y)\xi_1^{rl}(x,y).
\end{align*}
Comparing \eqref{eq:Hutch} with \eqref{eq:def-CV} for $\psi(\cdot,\tau)=\varphi(\cdot,\pi)$ and using $\frac{\partial \pi_{jk}}{\partial p_{lm}}\,=\, -\eps_{jlm}y_k - \eps_{klm}y_j$ we arrive at  \begin{align*}
	A_{ijk}(x,y^\perp) \,&=\frac{1}{\gamma(x)}\, \sum_{l<m} \frac{\partial \pi_{jk}}{\partial p_{lm}}\left(\beta(x,y)B_{ilm}(x,\tau) -\beta(x,-y)B_{ilm}(x,-\tau)\right)\\
	&=\, \frac{1}{\gamma(x)}\sum_{l<m} (\eps_{jlm}y_k + \eps_{klm}y_j)\,\cdot\,\\
	&\qquad \cdot\sum_{r,s} \left( \beta(x,y)\eps_{lms}\xi_0^{ir}(x,y)\xi_1^{rs}(x,y) -\beta(x,-y)\eps_{lms}\xi_0^{ir}(x,-y)\xi_1^{rs}(x,-y)\right)\\
	&=\, \frac{1}{\gamma(x)}\sum_{r}
	 \left( \beta(x,y)\xi_0^{ir}(x,y)(y_k\xi_1^{rj}(x,y) +y_j\xi_1^{rk}(x,y)) -\right.\\
	 &\qquad\qquad\left. -\beta(x,-y)\xi_0^{ir}(x,-y)(y_k\xi_1^{rj}(x,-y) +y_j\xi_1^{rk}(x,-y))\right)\\
	&=\, \frac{1}{\gamma(x)}\sum_{r} 
	 \xi_0^{ir}(x,y)y_k\Big( \beta(x,y)\xi_1^{rj}(x,y)+\beta(x,-y)\xi_1^{rj}(x,-y)\Big)+\\
	&\qquad  \qquad +\xi_0^{ir}(x,y)y_j\Big( \beta(x,y)\xi_1^{rk}(x,y)+\beta(x,-y)\xi_1^{rk}(x,-y)\Big),
\end{align*}
where we have used that $\xi_0(x,-y)=-\xi_0(x,y)$. This shows \eqref{eq:Aijk-CV}. For \eqref{eq:Hj-CV} observe that $\sum_i \xi_0^{ir}(x,y)y_i=0$ since $\xi_0=*y$.
\end{proof}
%
\section{Proof of theorem \ref{thm:main3}}\label{sec:proof}

In this section we prove Theorem \ref{thm:main3}. %
In order to characterize compactness properties of the sequence $(S_j,\theta_j), j\in\N$ we will associate to any $(S,\theta)\in\Msp$ a rectifiable current, given by the graph of $\theta$ over $S$. To be more precise let
\begin{align}
	G \,:=\, \{(p,\theta(p)) : p \in S\}. \label{eq:def-GG}
\end{align}
Notice that $\theta$ is in general not orthogonal to $S$ and that $G$ therefore is not necessarily a Gauss graph. As above we distinguish the space $\R^3_x$, where the surface $S$ is embedded, and the ambient space $\R^3_y$ of the image of $\theta,\nu$. Consider the parametrization $\pGG\colon S \to \Omega \times \R^3_y$,
\begin{align}\label{eq:def-graph}
	\pGG(p) \,:= \, (p,\theta(p)), \quad\text{ for }p\in S,
\end{align}
of $G$ over the surface $S$. 
 
From the calculations in Lemma \ref{lem:evDtheta} we know that $L(p)$ has eigenvalues $\evla(p),\evmu(p),0$ with an associated positively oriented orthonormal basis $\{v_1(p),v_2(p),\theta(p)\}$ of $\R^3$ given by eigenvectors of $L(p)$. Moreover, by \eqref{eq:evDtheta} the eigenvalues of $L(p)$ are controlled by
\begin{align}
	\int_{S} \big(\evla(p)^2 + \evmu(p)^2\big)\,d\mathcal H^2(p) \,\leq\, 12\int_S Q(L(p)) \,d\hs^{2}(p). \label{eq:eigenval-control}
\end{align}
One key bound to obtain the compactness of the graphs $G_j$ associated to $(S_j,Q_j)$ is the control of their area.
\begin{proposition}\label{prop:jacobian}
For $(S,\theta)\in\Msp$, $L$ as defined above, and the associated graph $G$ as in \eqref{eq:def-GG} we have
\begin{align}
	\Ha^2(G) \,\leq\, \Ha^2(S) + 12 \int_S Q(L(p)) \,d\hs^{2}(p). \label{eq:area-G}
\end{align}
\end{proposition}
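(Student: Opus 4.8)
The plan is to compute $\Ha^2(G)$ as the integral over $S$ of the area Jacobian of the parametrization $\pGG(p)=(p,\theta(p))$ and to estimate this Jacobian pointwise in terms of the eigenvalues $\evla(p),\evmu(p)$ of $L(p)$. First I would fix $p\in S$ and an orthonormal basis $\{e_1,e_2\}$ of the tangent plane $T_pS$. Since $L(p)\tau=D\theta(p)\tau$ for $\tau\in T_pS$ by \eqref{eq:cdt-theta3}, the differential of $\pGG$ acts by $D\pGG(p)e_i=(e_i,L(p)e_i)$, so the induced metric on $G$ in these coordinates is $\Id_2+M$, where $M$ is the symmetric positive semidefinite $2\times 2$ matrix $M_{ij}=\langle L(p)e_i,L(p)e_j\rangle$. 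Consequently
\begin{align*}
	\Ha^2(G)=\int_S J\pGG\,d\Ha^2,\qquad J\pGG=\sqrt{\det(\Id_2+M)}=\sqrt{1+\tr M+\det M}.
\end{align*}

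Next I would bound $\tr M$ and $\det M$ using the eigenstructure of $L(p)$. By \eqref{eq:cdt-theta2} the map $L(p)$ is symmetric, and by Lemma \ref{lem:evDtheta} it has the orthonormal eigenbasis $\{v_1,v_2,\theta\}$ with eigenvalues $\evla,\evmu,0$; in particular $L(p)^2$ has eigenvalues $\evla^2,\evmu^2,0$. For the trace, extending $\{e_1,e_2\}$ by the unit normal $\nu$ to an orthonormal basis of $\R^3$ gives
\begin{align*}
	\tr M=\sum_{i=1}^2\langle L(p)^2 e_i,e_i\rangle=\tr(L(p)^2)-|L(p)\nu|^2\,\leq\, \evla^2+\evmu^2.
\end{align*}
For the determinant I would use $\det M=|L(p)e_1\wedge L(p)e_2|^2$; expanding $L(p)e_i$ in the basis $\{v_1,v_2\}$ of $\theta(p)^\perp$ (recall \eqref{eq:theta4}) one finds $L(p)e_1\wedge L(p)e_2=\evla\evmu\,\langle v_1\wedge v_2,e_1\wedge e_2\rangle\,v_1\wedge v_2$, and since the inner product of two unit simple $2$-vectors is bounded by $1$ in absolute value this yields $\det M\leq \evla^2\evmu^2$.

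Combining the two bounds gives $1+\tr M+\det M\leq (1+\evla^2)(1+\evmu^2)$, and the elementary inequality $\sqrt{(1+\evla^2)(1+\evmu^2)}\leq 1+\evla^2+\evmu^2$ (valid for all real $\evla,\evmu$, as one checks by squaring) then produces the pointwise estimate $J\pGG\leq 1+\evla(p)^2+\evmu(p)^2$. Integrating over $S$ and invoking the eigenvalue control \eqref{eq:eigenval-control} gives
\begin{align*}
	\Ha^2(G)\leq \Ha^2(S)+\int_S\big(\evla(p)^2+\evmu(p)^2\big)\,d\Ha^2(p)\leq \Ha^2(S)+12\int_S Q(L(p))\,d\Ha^2(p),
\end{align*}
which is exactly \eqref{eq:area-G}.

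I expect the only genuinely delicate point to be the estimates for $\tr M$ and $\det M$: because $\theta$ need not coincide with $\nu$, the eigenplane $\theta(p)^\perp$ of $L(p)$ is in general tilted against the tangent plane $T_pS=\nu(p)^\perp$ over which we integrate. The key observation is that this misalignment can only \emph{decrease} both quantities --- the normal contribution $|L(p)\nu|^2$ is subtracted off in the trace, while a cosine factor $|\langle v_1\wedge v_2,e_1\wedge e_2\rangle|\leq 1$ multiplies the determinant --- so the clean bounds $\evla^2+\evmu^2$ and $\evla^2\evmu^2$ survive. Everything else reduces to a routine computation of a graph Jacobian combined with the already-established eigenvalue bound \eqref{eq:eigenval-control}.
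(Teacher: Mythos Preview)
Your proof is correct and follows essentially the same route as the paper: both compute the Gram determinant $(J\pGG)^2=1+\tr M+\det M$ of the graph map, identify $\tr M\leq\evla^2+\evmu^2$ and $\det M=\evla^2\evmu^2(\theta\cdot\nu)^2\leq\evla^2\evmu^2$ via the eigenbasis $\{v_1,v_2,\theta\}$ of $L$, and then use $\sqrt{(1+\evla^2)(1+\evmu^2)}\leq 1+\evla^2+\evmu^2$ together with \eqref{eq:eigenval-control}. The paper just writes out the intermediate expression \eqref{eq:jac1} explicitly rather than packaging it as $\tr M$ and $\det M$, but the computation and the estimates are identical.
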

\begin{proof}
Choose an orthonormal basis $\tau_1(p),\tau_2(p)$ of $T_pS$ and let us drop for the moment all arguments $p$. We then have
\begin{align}
	D\theta \tau_i \,&=\, \sum_{k=1,2} 
	(v_k\cdot D\theta \tau_i )v_k \,=\, \sum_{k=1,2}
	\lambda_k (v_k\cdot \tau_i)v_k. \label{eq:Dtheta} 
\end{align}
and obtain for the Jacobian of the parametrization $\pGG:S\to G$ of $G$
\begin{align}
	(\jac\pGG)^2 \,&=\, \det\Big( \begin{pmatrix} \tau_i\\ D\theta \tau_i\end{pmatrix}\cdot
	\begin{pmatrix} \tau_j\\ D\theta \tau_j \end{pmatrix}\Big)_{i,j=1,2} \notag\\
	&=\, 1 + |D\theta \tau_1 |^2 + |D\theta \tau_2 |^2 + |D\theta \tau_1 |^2|D\theta \tau_2 |^2 - \big(D\theta \tau_1 \cdot D\theta \tau_2 \big)^2 \notag\\
	&=\, 1 + \evla^2(1-(v_1\cdot\nu)^2) + \evmu^2(1-(v_2\cdot\nu)^2) + \evla^2\evmu^2 (\theta\cdot\nu)^2. \label{eq:jac1}
\end{align}
In particular we deduce that
\begin{align}
	1\,\leq\, |\jac\pGG | \,\leq\,  1+ (\evla^2+\evmu^2). \label{eq:jac2}
\end{align}
By \eqref{eq:eigenval-control} and the area formula we then obtain
\begin{align*}
	\Ha^2(G) \,=\, \int_S |\jac\pGG | \,\leq\, \Ha^2(S) + 12\int_S Q(L(p))\,d\Ha^2(p).
\end{align*}
\end{proof}

We next turn to some estimates related to the current associated to $G$. Let $\tau:=\ast\nu$ and note that $\nu=\tau_1\wedge \tau_2$, where $\{\tau_1(p),\tau_2(p),\nu(p)\}$ is any positively oriented orthornomal basis of $\R^3$. We then consider the tangent $2$-vector field $\xi$ on $G$ and the unit tangent $2$-vector field $\eta$ on $G$, given by
\begin{align}\label{eq:def-xi}
	\xi(p) \,:=\, D\pGG_p(\tau_1(p))\wedge D\pGG_p(\tau_2(p)), \quad \eta(p):=\frac{\xi(p)}{|\xi(p)|},
\end{align}
and define the current $T_G\in\mathcal R_2(\Omega\times \R^3_y)$ by
\begin{align}
	T_G \,:= \,\tau(G,1,\eta), \label{eq:def-TG} 
\end{align}
see Section \ref{sec:Rcurr}.
We first collect some useful information on $\xi$ and its stratifications, cf. \eqref{eq:def-strati}.
\begin{lemma}\label{lem:xi}
Let $\{e_1,e_2,e_3\}$ and $\{\eps_1,\eps_2,\eps_3\}$ denote the standard basis of $\R^3_x$, $\R^3_y$, respectively. Let us further represent $\xi= \xi_0 + \xi_1 + \xi_2$ by its stratifications,
\begin{align*}
	\xi_0 \,&=\, \sum_{1\leq i<j\leq 3} 
	\xi_0^{ij}e_i\wedge e_j, \quad
	\xi_1 \,=\, \sum_{1\leq i,j\leq 3} 
	\xi_1^{ij}e_i\wedge \eps_j, \quad
	\xi_2 \,=\, \sum_{1\leq i<j\leq 3} 
	\xi_2^{ij}\eps_i\wedge \eps_j.
\end{align*}
For convenience, we also set $\xi_0^{ij}=-\xi_0^{ji}$, $\xi_0^{ii}=0$ for $1\leq i,j\leq 3$, $j<i$. We then have
\begin{align}
	\xi_0\,&=\, \tau_1\wedge \tau_2,\quad \xi_0^{ij}\,=\, \tau_{1,i}\tau_{2,j}-\tau_{1,j}\tau_{2,i} \,=\, \sum_{k=1}^3
	\perm_{ijk} \nu_k, \label{eq:rep-xi0}\\
	\xi_1\,&=\, \tau_{1}\wedge D\theta \tau_2 -\tau_{2}\wedge D\theta \tau_1 ,\\
	\xi_1^{ij} \,&=\, \big(\tau_1\otimes D\theta \tau_2 -\tau_2\otimes D\theta \tau_1 \big)_{ij} \label{eq:rep-xi1}\\
	\xi_2\,&=\, D\theta \tau_1 \wedge D\theta \tau_2 ,\quad  \xi_2^{ij}\,=\,  \sum_{k=1}^3
	\big(D\theta \tau_1 \times D\theta \tau_2 \big)_k\perm_{ijk}
	\,=\,\evla\evmu (\theta\cdot \nu) (v_1\wedge v_2)_{ij}.\label{eq:rep-xi2}
\end{align}
Moreover we find
\begin{align}
	|\xi_0|^2 \,&=\, 1,\quad |\xi_1|^2 \,=\, |D\theta \tau_1|^2 + |D\theta \tau_2|^2 \,=\, \sum_{k=1}^2 
	\lambda_k^2\big(1-(v_k\cdot\nu)^2\big),\label{eq:xi-jac1}\\
	|\xi_2|^2 \,&=\, |D\theta \tau_1 \times D\theta \tau_2|^2 \,=\, \evla^2\evmu^2 (\theta\cdot \nu)^2,\label{eq:xi-jac2} \\
	|\xi|\circ\pGG \,&=\, \jac \pGG. \label{eq:xi-jac}
\end{align}
\end{lemma}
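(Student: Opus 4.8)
The plan is to treat this as a direct computation in which every identity falls out of expanding the defining wedge product \eqref{eq:def-xi} and using the orthonormality of the two frames $\{\tau_1,\tau_2,\nu\}$ and $\{v_1,v_2,\theta\}$. First I would make the differential explicit: since $\pGG(p)=(p,\theta(p))$, any tangent vector $\tau\in T_pS$ satisfies $D\pGG_p(\tau)=\tau+D\theta\,\tau$, with the first summand in $\R^3_x$ and the second in $\R^3_y$. Substituting into \eqref{eq:def-xi} and expanding the product of sums yields
$$
\xi \,=\, \tau_1\wedge\tau_2 \,+\, \big(\tau_1\wedge D\theta\tau_2 - \tau_2\wedge D\theta\tau_1\big) \,+\, D\theta\tau_1\wedge D\theta\tau_2,
$$
whose three groups lie respectively in $\Lambda^2(\R^3_x)$, $\Lambda^1(\R^3_x)\wedge\Lambda^1(\R^3_y)$ and $\Lambda^2(\R^3_y)$. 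By uniqueness of the stratification \eqref{eq:def-strati} these are exactly $\xi_0,\xi_1,\xi_2$, which gives the vector identities in \eqref{eq:rep-xi0}, \eqref{eq:rep-xi1}, \eqref{eq:rep-xi2}. The components of $\xi_0$ follow from the standard relation $(\tau_1\wedge\tau_2)^{ij}=\tau_{1,i}\tau_{2,j}-\tau_{1,j}\tau_{2,i}=\sum_k\perm_{ijk}\nu_k$, valid because $\{\tau_1,\tau_2,\nu\}$ is positively oriented orthonormal so that $\nu=\tau_1\times\tau_2$; the formula for $\xi_1^{ij}$ is merely the coefficient of $e_i\wedge\eps_j$.

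The one step requiring genuine care is the evaluation of $\xi_2$. I would insert the spectral representation $D\theta\tau_i=\sum_{k=1,2}\lambda_k(v_k\cdot\tau_i)v_k$ from \eqref{eq:Dtheta} into the cross product $D\theta\tau_1\times D\theta\tau_2$ and use $v_1\times v_2=\theta$ to obtain
$$
D\theta\tau_1\times D\theta\tau_2 \,=\, \evla\evmu\,\big[(v_1\cdot\tau_1)(v_2\cdot\tau_2)-(v_1\cdot\tau_2)(v_2\cdot\tau_1)\big]\,\theta.
$$
The bracketed factor is the upper-left $2\times2$ minor of the orthogonal matrix $O$ with entries $O_{ki}=v_k\cdot\tau_i$ (setting $v_3=\theta$, $\tau_3=\nu$) that relates the two positively oriented orthonormal frames, so $\det O=1$. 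By the cofactor identity $\cof O=(\det O)\,O=O$ for orthogonal matrices, that minor equals $O_{33}=\theta\cdot\nu$, whence $D\theta\tau_1\times D\theta\tau_2=\evla\evmu(\theta\cdot\nu)\theta$. The component identity $\xi_2^{ij}=\sum_k\perm_{ijk}(D\theta\tau_1\times D\theta\tau_2)_k=\evla\evmu(\theta\cdot\nu)(v_1\wedge v_2)_{ij}$ then follows by applying the same $\perm$-identity to $\theta=v_1\times v_2$.

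Finally the norm identities \eqref{eq:xi-jac1}--\eqref{eq:xi-jac} are bookkeeping against the orthonormal frames. We have $|\xi_0|=1$ since $\tau_1\wedge\tau_2$ is unit simple; expanding $|\xi_1|^2=\sum_{ij}(\xi_1^{ij})^2$ and using $|\tau_1|=|\tau_2|=1$ together with $\tau_1\cdot\tau_2=0$ kills the cross term and leaves $|D\theta\tau_1|^2+|D\theta\tau_2|^2$, which becomes $\sum_k\lambda_k^2(1-(v_k\cdot\nu)^2)$ after writing $|D\theta\tau_i|^2=\sum_k\lambda_k^2(v_k\cdot\tau_i)^2$ and using $(v_k\cdot\tau_1)^2+(v_k\cdot\tau_2)^2=1-(v_k\cdot\nu)^2$. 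The value of $|\xi_2|^2$ is read directly off the previous paragraph. Since the three strata are mutually orthogonal in $\Lambda^2(\R^3_x\oplus\R^3_y)$, we get $|\xi|^2=|\xi_0|^2+|\xi_1|^2+|\xi_2|^2$, which matches $(\jac\pGG)^2$ in \eqref{eq:jac1} term by term and proves \eqref{eq:xi-jac}; equivalently $|\xi|=\jac\pGG$ is immediate from the definition of the Jacobian as the norm of the wedge of the images of an orthonormal tangent frame. The main obstacle throughout is exactly the $\xi_2$ computation, namely recognizing that the determinantal factor in the cross product is precisely $\theta\cdot\nu$ via the cofactor structure of the orthogonal change-of-frame matrix; everything else is routine orthonormal-frame algebra.
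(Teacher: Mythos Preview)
Your proof is correct and follows exactly the paper's approach, namely direct expansion of the wedge product in \eqref{eq:def-xi} together with the spectral representation \eqref{eq:Dtheta}; the paper merely labels all of this ``straightforward calculations'' and points to $D\theta\tau_i=\sum_k\lambda_k(v_k\cdot\tau_i)v_k$. The only place where you genuinely add content beyond the paper's terse proof is the clean cofactor argument identifying the determinantal factor $(v_1\cdot\tau_1)(v_2\cdot\tau_2)-(v_1\cdot\tau_2)(v_2\cdot\tau_1)$ with $\theta\cdot\nu$, which the paper leaves entirely to the reader.
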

\begin{proof}
The assertions follow by straightforward calculations: concerning the identities involving $\lambda_1,\lambda_2$ we recall that
$$
D\theta \tau_i \,=\, \sum_k
	\lambda_k (v_k\cdot \tau_i)v_k
$$
from which the last-hand side of \eqref{eq:rep-xi2}, \eqref{eq:xi-jac1} and \eqref{eq:xi-jac2} follow immediately.
\end{proof}
We next investigate some useful properties of the current $T_G=\tau(G,1,\eta)$ associated to $G$. %
\begin{lemma}
Consider $\varphi^\ast \in\mathcal D^2(\R^{3}_x \times \R^{3}_y)$ and $\varphi \in \mathcal D^1(\R^{3}_x \times \R^{3}_y)$ as in \eqref{eq:form-phi}, \eqref{eq:form-phi-star}. For any $\omega \in \mathcal D^1(\Omega\times \R^3_y)$ there exists a positive constant $C$ such that 
\begin{equation}\label{GG2anow}
	|\langle T_G,\varphi\wedge\omega\rangle| \le C\|\omega\| \Big(\int_S (1- (\theta\cdot\nu)^2)\,d\Ha^2\Big)^{\frac{1}{2}}\Big(\int_S (1+ Q(L))\,d\Ha^2\Big)^{\frac{1}{2}}.
\end{equation}
Moreover, for any $g \in C^0_c(\Omega \times \R^3_y)$ such that $g\geq 0$ we have
\begin{equation}\label{GG2bnow}
	\langle T_G,g\varphi^\ast\rangle \geq 0.
\end{equation}
\end{lemma}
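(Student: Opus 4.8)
The plan is to reduce both statements to pointwise computations of the pairing of the relevant forms with the tangent $2$-vector $\xi$, using the area formula. By the definition \eqref{eq:def-TG} of $T_G$ together with \eqref{eq:xi-jac} and the area formula, for any $\alpha\in\mathcal D^2(\Omega\times\R^3_y)$ one has
\begin{align*}
	\langle T_G,\alpha\rangle \,=\, \int_G \langle \alpha,\eta\rangle\,d\Ha^2 \,=\, \int_S \langle \alpha(\pGG(p)),\xi(p)\rangle\,d\Ha^2(p),
\end{align*}
since $\eta=\xi/|\xi|$ and $|\xi|\circ\pGG=\jac\pGG$ cancel the Jacobian. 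It therefore suffices to estimate $\langle\varphi\wedge\omega,\xi\rangle$ and $\langle\varphi^\ast,\xi\rangle$ pointwise, where $\xi=D\pGG\tau_1\wedge D\pGG\tau_2$ is simple with $D\pGG\tau_i=(\tau_i,D\theta\tau_i)$.

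For the first estimate I would use the elementary identity $\langle\varphi\wedge\omega,v_1\wedge v_2\rangle=\langle\varphi,v_1\rangle\langle\omega,v_2\rangle-\langle\varphi,v_2\rangle\langle\omega,v_1\rangle$ applied to $v_i=D\pGG\tau_i$. The decisive observation is that the form $\varphi=\sum_j y_j\,dx^j$ from \eqref{eq:form-phi} evaluates on $D\pGG\tau_i$ to $\langle\varphi,D\pGG\tau_i\rangle=\theta\cdot\tau_i$, i.e.\ precisely the tangential components of $\theta$; here the graph is genuinely tilted, in contrast to a true Gauss graph where this term would vanish. Since $|\theta|=1$, decomposing $\theta$ in the orthonormal frame $\{\tau_1,\tau_2,\nu\}$ gives $(\theta\cdot\tau_1)^2+(\theta\cdot\tau_2)^2=1-(\theta\cdot\nu)^2$, which produces exactly the tilt factor in \eqref{GG2anow}. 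Combining this with $|\langle\omega,D\pGG\tau_i\rangle|\le\|\omega\|\,|D\pGG\tau_i|$ and the bound $|D\pGG\tau_i|^2=1+|D\theta\tau_i|^2\le 1+\evla^2+\evmu^2$, which follows from the eigen-representation \eqref{eq:Dtheta}, I obtain the pointwise estimate
\begin{align*}
	|\langle\varphi\wedge\omega,\xi\rangle| \,\le\, C\|\omega\|\,\big(1-(\theta\cdot\nu)^2\big)^{1/2}\big(1+\evla^2+\evmu^2\big)^{1/2}.
\end{align*}
Integrating over $S$, applying the Cauchy--Schwarz inequality in $L^2(S)$, and finally invoking the eigenvalue control \eqref{eq:eigenval-control} to replace $\evla^2+\evmu^2$ by $12\,Q(L)$ then yields \eqref{GG2anow}.

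The second estimate I expect to be a short sign computation. Since $\varphi^\ast$ from \eqref{eq:form-phi-star} is a pure $dx$-form, it pairs only with the horizontal stratum $\xi_0$, and using $\xi_0^{ij}=\sum_k\perm_{ijk}\nu_k$ from \eqref{eq:rep-xi0} (that is, $\xi_0=\ast\nu$) a direct evaluation gives $\langle\varphi^\ast,\xi\rangle=\langle\varphi^\ast,\xi_0\rangle=\theta\cdot\nu$ at the point $(p,\theta(p))$. Hence
\begin{align*}
	\langle T_G,g\varphi^\ast\rangle\,=\,\int_S g(p,\theta(p))\,(\theta\cdot\nu)\,d\Ha^2(p)\,\ge\,0,
\end{align*}
because $g\ge 0$ and $\theta\cdot\nu>0$ by \eqref{eq:cdt-theta1}. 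The only genuinely delicate point is the bookkeeping in the first estimate: one must correctly identify $\langle\varphi,D\pGG\tau_i\rangle$ as the tangential part of $\theta$ and verify that the orthonormality of the frame turns $\sum_i(\theta\cdot\tau_i)^2$ into the exact tilt integrand $1-(\theta\cdot\nu)^2$ on the right-hand side of \eqref{GG2anow}; everything else is Cauchy--Schwarz and the previously established eigenvalue bound.
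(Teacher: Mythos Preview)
Your proposal is correct and follows essentially the same route as the paper: reduce to the integral over $S$ via the area formula and $|\xi|\circ\pGG=\jac\pGG$, expand $\langle\varphi\wedge\omega,\xi\rangle$ on the simple $2$-vector $D\pGG\tau_1\wedge D\pGG\tau_2$, observe that $\langle\varphi,D\pGG\tau_i\rangle=\theta\cdot\tau_i$ so that Cauchy--Schwarz in $\R^2$ produces the factor $\sqrt{1-(\theta\cdot\nu)^2}$, bound the remaining factor by $\|\omega\|\sqrt{2+|D\theta\tau_1|^2+|D\theta\tau_2|^2}$ and then by $\evla^2+\evmu^2$ via \eqref{eq:xi-jac1}, and finish with Cauchy--Schwarz on $S$ and \eqref{eq:eigenval-control}; the second part is exactly the computation $\langle\varphi^\ast,\xi\rangle=\langle\varphi^\ast,\xi_0\rangle=\theta\cdot\nu$ from \eqref{eq:rep-xi0} followed by the sign condition \eqref{eq:cdt-theta1}. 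The only cosmetic difference is that the paper keeps $|D\theta\tau_1|^2+|D\theta\tau_2|^2$ in the intermediate pointwise bound rather than passing immediately to $\evla^2+\evmu^2$, which is equivalent up to the constant.
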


\begin{proof}
For any $\omega \in \mathcal D^1(\Omega\times\R^3_y)$ we have the pointwise estimates
\begin{align}
	|\langle \varphi \wedge \omega, \xi\rangle| \,&=\, \left|\smat{ y\\0}\cdot D\pGG \tau_1 \langle \omega, D\pGG \tau_2 \rangle - \smat{y\\0}\cdot D\pGG \tau_2 \langle \omega, D\pGG\tau_1\rangle \right|\notag\\
	&=\, \left|(y\cdot \tau_1) \langle \omega, D\pGG \tau_2 \rangle - (y\cdot \tau_2) \langle \omega, D\pGG(\tau_1)\rangle\right| \notag\\
	&\leq\, \sqrt{1- (y\cdot \nu)^2} \sqrt{2+ |D\theta  \tau_1 |^2 + |D\theta  \tau_2|^2} |\omega|.
\end{align}
By the area formula we then deduce
\begin{align}
	|\langle T_G,\varphi\wedge\omega\rangle| \,&\le\, \int_G |\langle \varphi\wedge \omega,\xi\rangle | \frac{1}{|\xi|}\,d\Ha^2 \notag\\
	&\leq\, \|\omega\|\int_S \sqrt{1- (\theta\cdot \nu)^2} \sqrt{2+ |D\theta \tau_1|^2 + |D\theta \tau_2|^2} \,d\Ha^2 \notag\\
	&\leq\, C\|\omega\| \Big(\int_S (1- (\theta\cdot\nu)^2)\,d\Ha^2\Big)^{\frac{1}{2}}\Big(\int_S (1+ Q(L))\,d\Ha^2\Big)^{\frac{1}{2}},
\end{align}
by the last equality in \eqref{eq:xi-jac1} and by \eqref{eq:eigenval-control}, which proves the first claim.

We moreover obtain by \eqref{eq:rep-xi0} that
\begin{align}
	\langle \varphi^*, \xi \rangle \,&=\, \langle \varphi^*, \xi_0 \rangle 
	\,=\, \sum_{1\leq i<j \leq 3}\sum_{k=1}^3
	\perm_{ijk} y_k \xi_0^{ij} \,=\, y\cdot \nu. \label{eq:6.21}
\end{align}
Applying once more the area formula, for all $g \in C^0_c(\Omega \times \R^3_y)$ such that $g\geq 0$ we have
$$
\begin{aligned}
\langle T_G,g\varphi^\ast\rangle&=\int_G\langle \varphi^*(p,y),\eta(p,y)\rangle g(p,y)\,d\Ha^2(p,y)\\
&=\int_S(\nu(p) \cdot \theta(p)) g(p,\nu(p))\,d\mathcal  H^2(p)\geq 0,
\end{aligned}
$$
since we have assumed that $\nu \cdot \theta>0$ everywhere on $S$, which completes the proof of \eqref{GG2bnow}.
\end{proof}
We now start with the proof of Theorem \ref{thm:main3} and first show that the graph currents as defined above converge for a subsequence to a generalized Gauss graph.
\begin{proposition}
Consider a sequence $(S_j,\theta_j)\in \Msp$ as in Theorem \ref{thm:main3}, let $G_j$ denote the associated graph of $\theta_j$ over $S_j$, and let $T_j=T_{G_j}$ be the associated currents to $G_j$ as defined above. Then there exist a subsequence $j\to\infty$ (not relabeled) and generalized Gauss graph $T\in {\rm curv}_2(\Omega)$, $T=\tau(G,\beta,\eta)$, such that
\begin{align}
	T_{j} \,\weak\, T. \label{eq:convTj}
\end{align}
For the Radon measures $|T^j_0|, |T_0|$ on $\Omega\times\R^3_y$ defined by
\begin{align*}
	|T_0^j| \,&:=\, |\eta_0^j|\,\Ha^2\mres G_j, \qquad
	|T_0| \,:=\, |\eta_0|\,\beta\Ha^2\mres G
\end{align*}
and the subsequence $j\to\infty $ with \eqref{eq:convTj} we have
\begin{align}
	|T_0^j| \,&\weakstar\, |T^0|\quad\text{ as Radon measures.} \label{eq:convT0j}
\end{align}
\end{proposition}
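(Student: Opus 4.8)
The plan is to extract a convergent subsequence via the Federer--Fleming theorem, to verify that the limit current satisfies the three defining conditions of a generalized Gauss graph, and finally to identify the weak-$*$ limit of the measures $|T_0^j|$ by comparing them with the pairing of $T_j$ against $\varphi^\ast$. For the compactness I would bound the masses using Proposition~\ref{prop:jacobian}: since $\mathbb{M}(T_j)=\Ha^2(G_j)$ (the multiplicity being $1$ and $|\eta_j|=1$) and $\int_{S_j}Q(L_j)\le\Qfu_{\e_j}(S_j,\theta_j)\le\Lambda$ (the tilt term in \eqref{eq:def-Qfu} is nonnegative because $\theta_j\cdot\nu_j\le 1$), the estimate \eqref{eq:area-G} gives $\mathbb{M}(T_j)\le\sup_j\Ha^2(S_j)+12\Lambda$, which is finite by \eqref{eq:bd-area}. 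Moreover each $T_j$ is the push-forward of the boundaryless integration current $\tau(S_j,1,*\nu_j)$ of the closed surface $S_j$ under the Lipschitz graph map $\Phi_j$, so $\partial T_j=0$; and by \eqref{eq:contain} together with $|\theta_j|=1$ all the $G_j$ lie in the fixed compact set $\tilde\Omega\times S^2$. Theorem~\ref{comp-federer} then yields a subsequence and a rectifiable $T=\tau(G,\beta,\eta)$ with $T_j\weak T$, $\partial T=0$, and $\spt T\subset\overline{\tilde\Omega}\times S^2\subset\Omega\times S^2$.

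Next I would check that $T\in{\rm curv}_2(\Omega)$. Condition \eqref{GG1} is immediate from the support property and $\partial T=0$. For \eqref{GG2} I would pass to the limit in \eqref{GG2anow}: its second factor is uniformly bounded by \eqref{eq:bd-area} and \eqref{eq:Lambda}, while the first tends to $0$ because $1-(\theta_j\cdot\nu_j)^2\le 2(1-\theta_j\cdot\nu_j)\le 2(\tfrac{1}{\theta_j\cdot\nu_j}-1)$, whence $\int_{S_j}(1-(\theta_j\cdot\nu_j)^2)\le 2\e_j^2\Lambda\to 0$. Since $\varphi\wedge\omega$ is smooth with compact support (that of $\omega$), $\langle T_j,\varphi\wedge\omega\rangle\to\langle T,\varphi\wedge\omega\rangle$, forcing $\langle T,\varphi\wedge\omega\rangle=0$. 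Condition \eqref{GG3} follows by passing to the limit in \eqref{GG2bnow}; as here $g$ is merely continuous, I would first approximate $g$ uniformly by smooth functions and use the uniform mass bound together with the common compact support to justify $\langle T_j,g\varphi^\ast\rangle\to\langle T,g\varphi^\ast\rangle\ge 0$.

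For the final assertion I would use $|\eta_{j,0}|=|\xi_{j,0}|/|\xi_j|=1/\jac\Phi_j$ (from \eqref{eq:xi-jac1}, \eqref{eq:xi-jac}), so that the area formula gives $|T_0^j|(f)=\int_{S_j}f(p,\theta_j(p))\,d\Ha^2(p)$, i.e.\ $|T_0^j|=(\Phi_j)_\#(\Ha^2\mres S_j)$. On the other hand, \eqref{eq:6.21} yields $\langle T_j,f\varphi^\ast\rangle=\int_{S_j}f(p,\theta_j(p))\,(\theta_j\cdot\nu_j)(p)\,d\Ha^2(p)$, so the two quantities differ by $\int_{S_j}f(1-\theta_j\cdot\nu_j)$, which is bounded by $\|f\|_\infty\,\e_j^2\Lambda\to 0$. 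Passing to the limit (by the same uniform-approximation argument used for \eqref{GG3}) gives $|T_0^j|(f)\to\langle T,f\varphi^\ast\rangle$. I then identify $\langle T,f\varphi^\ast\rangle=|T^0|(f)$: since $T$ is a generalized Gauss graph, \eqref{GG2} forces $y$ to be orthogonal to the enveloping subspace of $\eta_0$, so on $G^\ast$ one has $\eta_0=|\eta_0|\,{*}n$ with $n$ a unit normal to $T_xS$ and $n=\pm y$, while \eqref{GG3} selects $n=y$; hence $\langle\varphi^\ast,\eta\rangle=\langle\varphi^\ast,\eta_0\rangle=|\eta_0|\,(y\cdot n)=|\eta_0|$ holds $\beta\Ha^2$-a.e.\ on $G$ (both sides vanishing on $G\setminus G^\ast$). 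Therefore $\langle T,f\varphi^\ast\rangle=\int_G f|\eta_0|\beta\,d\Ha^2=|T^0|(f)$, and since $f\in C^0_c$ was arbitrary this is exactly \eqref{eq:convT0j}.

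I expect the genuinely delicate point to be this last identification. The compactness and the verification of \eqref{GG1}--\eqref{GG3} are routine consequences of the mass bound and the already-established estimates \eqref{GG2anow}--\eqref{GG2bnow}; the crux is to recognize that $|T_0^j|$ is, up to the asymptotically negligible factor $\theta_j\cdot\nu_j$, represented by the current pairing with $\varphi^\ast$, and that on the limit this pairing reproduces $|\eta_0|$ precisely because the orientation condition \eqref{GG3} removes the sign ambiguity left by the verticality encoded in \eqref{GG2}. The secondary technical obstacle, passing from smooth to continuous test forms, is resolved by uniform approximation together with the equi-bounded masses and common compact support of the $T_j$.
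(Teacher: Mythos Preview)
Your proof is correct and follows essentially the same route as the paper: mass bound via Proposition~\ref{prop:jacobian}, $\partial T_j=0$, Federer--Fleming compactness, then verification of \eqref{GG1}--\eqref{GG3} by passing to the limit in \eqref{GG2anow}--\eqref{GG2bnow}, and finally the identification of the weak-$*$ limit of $|T_0^j|$ through the pairing with $\varphi^\ast$. The one place where you go slightly beyond the paper is the identity $\langle T,f\varphi^\ast\rangle=\int_G f|\eta_0|\beta\,d\Ha^2$ for the limit current: the paper simply quotes \cite[Prop.~2.8]{AnST90}, whereas you derive it directly from \eqref{GG2} (forcing $\eta_0=\pm|\eta_0|\,{*}y$) and \eqref{GG3} (selecting the $+$ sign), which is exactly the content of that proposition; you also make explicit the density argument needed to test the current convergence against merely continuous $g$, a point the paper passes over in silence.
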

\begin{proof}
From \eqref{eq:bd-area}, \eqref{eq:Lambda}, and Proposition \ref{prop:jacobian} we deduce that 
\begin{align}
	\mathcal  H^2(G_j) \,&\le\, C\Big(\mathcal H^2(S_j)+\int_{S_j}Q(L_j)\,d\mathcal H^2\Big) \,\leq\, C(1+\Lambda). \label{eq:area-Gj}
\end{align}
Next we notice that $\partial T_j=0$ because $S_j$ has no boundary and $\theta_j \colon S_j \to S^2$ is Lipschitz continuous \cite{GiMS89}. We therefore deduce that the sequence $(T_j)_{j\in\N}$ has uniformly bounded mass and boundary mass. Applying the Federer-Fleming compactness Theorem \ref{comp-federer} we deduce that $T_{j_k} \weak T$ for some $T\in\mathcal R_2(\Omega \times \R^3_y)$.

It remains to show that $T\in {\rm curv}_2(\Omega)$, i.e.\,that \eqref{GG1}, \eqref{GG2} and \eqref{GG3} hold. First of all, since $T_j$ is supported in $\tilde\Omega \times S^2$ for any $j\in \N$ we obtain that $T$ is supported in $\Omega\times S^2$. Moreover, since $\partial T_j=\emptyset$ the convergence as currents also implies $\partial T=\emptyset$.
Therefore, \eqref{GG1} is satisfied by $T$.

Since \eqref{GG2anow} holds for all $T_j$ we deduce that for any $\omega\in \mathcal D^1(\Omega\times \R^3_y)$
\begin{align}
	&\limsup_{j\to\infty}|\langle T_j,\varphi\wedge\omega\rangle|\\ \le\,&  C\|\omega\| \limsup_{j\to\infty}\Big(\int_{S_j} (1- (\theta_j\cdot\nu_j)^2)\,d\Ha^2\Big)^{\frac{1}{2}}\Big(\int_{S_j} (1+ Q(L_j))\,d\Ha^2\Big)^{\frac{1}{2}} \,=\,0, \label{eq:GG2aaa}
\end{align}
where we have used \eqref{eq:bd-area} and \eqref{eq:Lambda}. This shows \eqref{GG2}. 
Similarly, from \eqref{GG2bnow} for $T$ replaced by $T_j$ we obtain in the limit $j\to\infty$ \eqref{GG3}.
This concludes the proof that $T\in {\rm curv}_2(\Omega)$.

For the proof of \eqref{eq:convT0j} we follow \cite[Prop.\,2.8]{AnST90}. By \eqref{eq:6.21} we have that for all $g\in C^0_c(\Omega\times\R^3_y)$
\begin{align*}
	\int_{G_j} |\eta_0^j(x,y)|(y\cdot\nu_j(x))g(x,y)\,d\Ha^2(x,y)\,&=\,\int_{G_j} \langle\varphi^*,\eta^j \rangle g\,d\Ha^2\\
	&=\, \langle T_j,\varphi^*g\rangle \\
	&\to\, \langle T,\varphi^*g\rangle\,=\, \int_G |\eta_0|\beta g \,d\Ha^2.
\end{align*}
Furthermore
\begin{align*}
	\Big|\int_{G_j} |\eta_0^j(x,y)|(1- y\cdot\nu_j(x))g(x,y)\,d\Ha^2(x,y)\Big|\,&=\,\Big|\int_{S_j} (1-\theta_j(x)\cdot\nu_j(x))g(x,\theta_j(x))\,d\Ha^2(x)\Big|\\
	&\leq\, \|g\|_{C^0(\Omega\times\R^3_y)} \int_{S_j} (1-\theta_j\cdot\nu_j)\,d\Ha^2\,\to\, 0
\end{align*}
by \eqref{eq:Lambda}. Together with the previous convergence statement \eqref{eq:convT0j} follows.
\end{proof}
The next lemma collects further properties of the limit Gauss graph $T$.
\begin{lemma}\label{lem:perpT}
The Gauss graph $T=\tau(G,\beta,\eta)$ from \eqref{eq:convTj} satisfies for $\Ha^2$-almost every $(x,y)\in G$
\begin{align}
	\sum_{1\leq i \leq 3} \eta_1^{ij}(x,y)y_i \,&=\, 0\quad&\text{ for all }1\leq j\leq 3, \label{eq:perpT1}\\
	\sum_{1\leq j \leq 3} \eta_1^{ij}(x,y)y_j \,&=\, 0\quad&\text{ for all }1\leq i\leq 3. \label{eq:perpT2}
\end{align}
\end{lemma}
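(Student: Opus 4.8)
The plan is to verify the two identities separately, drawing on two different ingredients: the defining condition \eqref{GG2} of a generalized Gauss graph for \eqref{eq:perpT1}, and the exact algebraic constraint \eqref{eq:theta4} on the approximating graphs together with the weak convergence $T_j\weak T$ for \eqref{eq:perpT2}. The point is that \eqref{eq:perpT1} is a property of the abstract limit object, while \eqref{eq:perpT2} is not implied by the Gauss-graph conditions and must be traced back to the sequence.

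For \eqref{eq:perpT1} I would start from \eqref{GG2} and isolate the mixed stratum $\eta_1$ by a suitable choice of test form. Writing a generic $\omega\in\mathcal D^1(\Omega\times\R^3_y)$ and expanding $\varphi\wedge\omega$ against $\eta=\eta_0+\eta_1+\eta_2$, the pure $dy$-block contributes only through $\eta_1$: since $\varphi=\sum_i y_i\,dx^i$ lies in $\Lambda^1(\R^3_x)$, one computes $\langle \varphi\wedge (g\,dy^k),\eta\rangle = g\sum_i y_i\,\eta_1^{ik}$, the $dx^i\wedge dy^k$ terms pairing exactly with the $\Lambda^1(\R^3_x)\wedge\Lambda^1(\R^3_y)$ part of $\eta$. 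Testing \eqref{GG2} against $\omega=g\,dy^k$ for arbitrary $g$ and each fixed $k$ then yields $\int_G g\big(\sum_i y_i\,\eta_1^{ik}\big)\beta\,d\Ha^2=0$; since $g$ is arbitrary and $\beta\geq 1$, I conclude $\sum_i \eta_1^{ij}y_i=0$ for $\Ha^2$-a.e. $(x,y)\in G$, which is \eqref{eq:perpT1}.

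For \eqref{eq:perpT2} the condition \eqref{GG2} does not suffice, as it only encodes orthogonality of $y$ to the $x$-slot of the enveloping subspace. Instead I would exploit that every approximating graph satisfies the analogous identity \emph{pointwise}. On $G_j$ we have $y=\theta_j$ and, by \eqref{eq:rep-xi1}, $\xi_1^{ik}=\tau_{1,i}(D\theta_j\tau_2)_k-\tau_{2,i}(D\theta_j\tau_1)_k$; since $D\theta_j\tau=L_j\tau\in\theta_j^\perp$ by \eqref{eq:theta4}, contracting with $y=\theta_j$ gives $\sum_k \xi_1^{ik}\theta_{j,k}=\tau_{1,i}(D\theta_j\tau_2\cdot\theta_j)-\tau_{2,i}(D\theta_j\tau_1\cdot\theta_j)=0$, so $\sum_k \eta_1^{j,ik}y_k=0$ on $G_j$. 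To transfer this to the limit I introduce the $1$-form $\psi:=\sum_k y_k\,dy^k$ and note $\langle dx^i\wedge\psi,\eta\rangle=\sum_k y_k\,\eta_1^{ik}$. Testing the currents against the $2$-form $g\,dx^i\wedge\psi$, whose coefficients $g(x,y)y_k$ are smooth and compactly supported, gives $\langle T_j,g\,dx^i\wedge\psi\rangle=\int_{G_j} g\sum_k y_k\,\eta_1^{j,ik}\,d\Ha^2=0$ for every $j$; passing to the limit via $T_j\weak T$ yields $\langle T,g\,dx^i\wedge\psi\rangle=0$, i.e. $\int_G g\big(\sum_k y_k\,\eta_1^{ik}\big)\beta\,d\Ha^2=0$ for all $g$ and each $i$, whence \eqref{eq:perpT2}.

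The main subtlety is precisely this difference in provenance: \eqref{eq:perpT1} follows from \eqref{GG2}, which in turn encodes the vanishing-tilt information $\theta_j\cdot\nu_j\to 1$ carried into the limit, whereas \eqref{eq:perpT2} reflects the exact constraint $L_j(\R^3)\subset\theta_j^\perp$ holding on each graph and therefore cannot be read off the limiting Gauss-graph axioms alone. The only technical care needed is to confirm that $g\,dx^i\wedge\psi$ is an admissible compactly supported test form, so that weak convergence of currents applies, and to use $\beta\geq 1$ in order to pass from the integrated identities to the $\Ha^2$-a.e. pointwise conclusions.
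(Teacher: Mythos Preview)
Your argument is correct. For \eqref{eq:perpT1} you and the paper proceed identically: both extract the identity from the Gauss-graph condition \eqref{GG2} by testing with forms of type $g\,dy^k$ (the paper phrases this via \cite[Prop.~2.4]{AnST90}, but the content is the same).

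For \eqref{eq:perpT2} the routes genuinely diverge. You push the pointwise identity $\sum_k\xi_1^{ik}\theta_{j,k}=0$ (which is exactly the first part of \eqref{propL1}) through the weak convergence $T_j\weak T$ by testing against $g\,dx^i\wedge\psi$ with $\psi=\sum_k y_k\,dy^k$. The paper instead works intrinsically on the limit: it invokes the structure theorem \cite[Thm.~2.10]{AnST90}, which represents $T$ for $\Ha^2$-a.e.\ $(x,y)\in G^*$ locally as the graph of a $C^1$ map $\zeta:\Sigma\to S^2$, and then uses $D\zeta\tau\cdot\zeta=\tfrac12 D_\tau|\zeta|^2=0$ to obtain \eqref{eq:perpT2}. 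Your approach is more self-contained (no structure theorem required) and yields the identity on all of $G$ rather than just $G^*$. The paper's approach, on the other hand, shows that \eqref{eq:perpT2} is in fact a general property of any generalized Gauss graph supported in $\Omega\times S^2$, contrary to your closing remark that it ``cannot be read off the limiting Gauss-graph axioms alone''; the constraint you exploit on the sequence, $L_j(\R^3)\subset\theta_j^\perp$, is just the differential form of $|\theta_j|=1$, and the sphere constraint survives in the limit as \eqref{GG1}. This misattribution does not affect the validity of your proof.
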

\begin{proof}
By the orthogonality property \eqref{GG2} we deduce (see the proof of \cite[Proposition 2.4]{AnST90}) that 
\begin{align*}
	\langle \eta(x,y) , (y,0)\wedge (0,w)\rangle \,=\,0 \quad\text{for all }w\in\R^3\text{ and }\Ha^2-\text{a.e. }(x,y)\in G. 
\end{align*}
Therefore we deduce that $\sum_{ij}\eta_1^{ij}(x,y)y_iw_j=0$ for all $w\in\R^3$, which implies \eqref{eq:perpT1}.

By \cite[Theorem 2.10]{AnST90} for $\Ha^2$-almost every $(x,y)\in G^*$ there are an embedded $C^1$ surface $\Sigma\subset\R^3$ and a $C^1$ map $\zeta: \Sigma\to S^2$ such that 
\begin{align}
	\zeta(x)=y,\quad \Lambda^2(\Id \oplus d\zeta_x)(*y)\,=\,\eta(x,y) |\Id \oplus d\zeta_x|. \label{eq:repr-form}
\end{align}
By Lemma \ref{lem:xi} we obtain that for $i=1,2,3$ and $*y=\tau_1\wedge\tau_2$
\begin{align*}
	|\Id \oplus d\zeta_x|\sum_j\eta^{ij}_1y_j \,=\, e_i\cdot (\tau_1\otimes D\zeta(x) \tau_2 -\tau_2\otimes D\zeta(x) \tau_1)y
	\,=\, 0,
\end{align*}
since $D\zeta(x)\tau_k\cdot y=D\zeta(x)\tau_k\cdot \zeta(x)=0$ for $k=1,2$ as $\zeta$ maps into $S^2$. This proves \eqref{eq:perpT2}.
\end{proof}
In the following we derive the lower bound \eqref{eq:main3}. We first express the function $Q(L)$ in terms of $\theta$ and $\xi$.
\begin{lemma}\label{lem:Q-xi}
Let $(S,\theta)\in\Msp$ and consider $L,\xi$ as defined in \eqref{eq:cdt-theta3} and \eqref{eq:def-xi}. We then have
\begin{align}
	\tr L \,&=\, \frac{1}{\theta \cdot \nu}(\theta_1(\xi_1^{23}-\xi_1^{32})-\theta_2(\xi_1^{13}-\xi_1^{31})+\theta_3(\xi_1^{12}-\xi_1^{21})) \label{eq:trL-xi}\\
	&=\, \frac{1}{\theta\cdot\nu}\langle\Psi_{\theta},\xi_1\rangle 
	,\label{eq:trL-xi-2}\\
	\tr\cof L \,&=\, \frac{1}{(\theta \cdot \nu)^2}\theta\cdot \cof \xi_1 \theta,\label{eq:Q-xi}
\end{align}
where $L,\theta,\nu$ are evaluated in $p\in S$ and $\xi$ in $(p,\theta(p))$, where $\Psi_{\theta}\,=\sum_{i,k,l}\, \perm_{ikl}\theta_i dx^k\wedge dy^l$, where $\cof \xi_1$ denotes the cofactor matrix of the matrix representation $(\xi_1^{ij})_{1\leq i,j\leq 3}$ of $\xi_1$, and where $\xi_0 : \xi_1$ denote the matrix product between the respective matrix representations.

Moreover, we have
\begin{align}\label{propL1}
	\sum_{j=1}^3
	\xi_1^{ij}\theta_j\,=\,0\quad\text{ for all }i=1,2,3, \qquad
	\sum_{k=1}^3
	\xi_1^{kk}\,=\,0. 
\end{align}
\end{lemma}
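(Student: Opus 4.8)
The plan is to reduce both curvature formulas to finite-dimensional linear algebra, exploiting the representation $\xi_1=\tau_1\otimes L\tau_2-\tau_2\otimes L\tau_1$ from \eqref{eq:rep-xi1} in Lemma~\ref{lem:xi} (recall $D\theta\,\tau_i=L\tau_i$, since $\tau_i\in T_pS$ and $L$ extends $D\theta$ by \eqref{eq:cdt-theta3}), together with the two structural facts that $L$ is symmetric with $L\theta=0$, and that $\{v_1,v_2,\theta\}$ is a positively oriented orthonormal eigenbasis of $L$ with $Lv_k=\lambda_k v_k$. I work at a fixed $p$ and use the positively oriented orthonormal frame $\{\tau_1,\tau_2,\nu\}$ with $\tau_1\times\tau_2=\nu$. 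Since the third eigenvalue of $L$ vanishes, $\tr L=\lambda_1+\lambda_2$ and $\tr\cof L=\lambda_1\lambda_2$.

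First I would dispose of \eqref{propL1}. Reading $\xi_1$ as the matrix $\tau_1\otimes L\tau_2-\tau_2\otimes L\tau_1$ and applying it to $\theta$ gives $\sum_j\xi_1^{ij}\theta_j=(L\tau_2\cdot\theta)(\tau_1)_i-(L\tau_1\cdot\theta)(\tau_2)_i$; by symmetry of $L$, $L\tau_k\cdot\theta=\tau_k\cdot L\theta=0$, so this vanishes, which is the first identity. For the second, $\sum_k\xi_1^{kk}=\tau_1\cdot L\tau_2-\tau_2\cdot L\tau_1=0$, again by symmetry of $L$.

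Next, for \eqref{eq:trL-xi}–\eqref{eq:trL-xi-2} I would first check that the explicit combination in \eqref{eq:trL-xi} is precisely $\langle\Psi_\theta,\xi_1\rangle=\sum_{i,k,l}\perm_{ikl}\theta_i\xi_1^{kl}$, and then compute, using $\sum_{ikl}\perm_{ikl}\theta_i a_k b_l=\theta\cdot(a\times b)$,
\[
\langle\Psi_\theta,\xi_1\rangle=\theta\cdot(\tau_1\times L\tau_2)-\theta\cdot(\tau_2\times L\tau_1).
\]
Expanding $L\tau_i$ in the frame $\{\tau_1,\tau_2,\nu\}$ and using $\tau_1\times\nu=-\tau_2$, $\tau_2\times\nu=\tau_1$, the tangential–tangential parts yield $(\theta\cdot\nu)(L\tau_1\cdot\tau_1+L\tau_2\cdot\tau_2)$, while the normal components leave the cross terms $-(L\tau_1\cdot\nu)(\theta\cdot\tau_1)-(L\tau_2\cdot\nu)(\theta\cdot\tau_2)$. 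To absorb these I would test $L\theta=0$ against $\nu$ after writing $\theta=(\theta\cdot\tau_1)\tau_1+(\theta\cdot\tau_2)\tau_2+(\theta\cdot\nu)\nu$ and using $\tau_i\cdot L\nu=L\tau_i\cdot\nu$, which gives $(L\tau_1\cdot\nu)(\theta\cdot\tau_1)+(L\tau_2\cdot\nu)(\theta\cdot\tau_2)=-(\theta\cdot\nu)(L\nu\cdot\nu)$. Adding the pieces reconstitutes $(\theta\cdot\nu)(L\tau_1\cdot\tau_1+L\tau_2\cdot\tau_2+L\nu\cdot\nu)=(\theta\cdot\nu)\tr L$, which is the claim.

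The hard part is \eqref{eq:Q-xi}, and the decisive device is to write $\theta=v_1\times v_2$ and invoke the cofactor–cross-product identity $\cof(M)(u\times v)=(Mu)\times(Mv)$, valid for every $M\in\R^{3\times3}$, so that $\cof(\xi_1)\theta=(\xi_1 v_1)\times(\xi_1 v_2)$. Using symmetry of $L$ and $Lv_k=\lambda_k v_k$ one finds $\xi_1 v_k=\lambda_k\big((\tau_2\cdot v_k)\tau_1-(\tau_1\cdot v_k)\tau_2\big)$, whence
\[
(\xi_1 v_1)\times(\xi_1 v_2)=\lambda_1\lambda_2\,\big[(\tau_1\cdot v_1)(\tau_2\cdot v_2)-(\tau_1\cdot v_2)(\tau_2\cdot v_1)\big]\,\nu.
\]
The bracket is the minor obtained by deleting the last row and column of the matrix $O$ with entries $O_{ij}$ equal to the pairing of the $i$-th vector of $\{\tau_1,\tau_2,\nu\}$ with the $j$-th vector of $\{v_1,v_2,\theta\}$; since both frames are positively oriented orthonormal, $O\in SO(3)$, so $\cof O=O$ and this minor equals the $(3,3)$ cofactor $O_{33}=\theta\cdot\nu$. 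Hence $\cof(\xi_1)\theta=\lambda_1\lambda_2(\theta\cdot\nu)\nu$, and pairing with $\theta$ gives $\theta\cdot\cof(\xi_1)\theta=\lambda_1\lambda_2(\theta\cdot\nu)^2=(\theta\cdot\nu)^2\tr\cof L$, as required. I expect the only genuinely delicate point to be the orientation bookkeeping that identifies the $2\times2$ determinant with $\theta\cdot\nu$; this is exactly where positivity of the orientation of both frames enters, and it must be tracked carefully to get the sign right.
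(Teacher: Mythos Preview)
Your argument is correct. Both \eqref{propL1} and the two curvature formulas are handled soundly; in particular the cofactor identity $\cof(M)(u\times v)=(Mu)\times(Mv)$ together with $\theta=v_1\times v_2$ and the $SO(3)$ minor trick is a clean way to get \eqref{eq:Q-xi}, and your frame expansion for \eqref{eq:trL-xi} is a legitimate (if slightly longer) route.

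The paper proceeds differently on both main identities. For the trace, it does not expand in the $\{\tau_1,\tau_2,\nu\}$ frame at all; instead it writes $(\theta\cdot\nu)\det(L-r\Id)=\det(\tau_1\,|\,\tau_2\,|\,\theta)\det(L-r\Id)$, expands the right-hand side as a polynomial in $r$ via multilinearity, and simply reads off the $r^2$ and $r$ coefficients to get $(\theta\cdot\nu)\tr L=(\tau_1\times L\tau_2-\tau_2\times L\tau_1)\cdot\theta$ and $(\theta\cdot\nu)\tr\cof L=(L\tau_1\times L\tau_2)\cdot\theta$ in one stroke. This avoids the auxiliary use of $L\theta=0$ tested against $\nu$ that you need. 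For \eqref{eq:Q-xi}, the paper then invokes the rank-one perturbation formula $\det(M+\theta\theta^T)=\det M+\theta\cdot(\cof M)\theta$ with $M=\xi_1$, and evaluates the resulting $3\times 3$ determinant in the $\{v_1,v_2,\theta\}$ basis. Your cofactor--cross-product approach is arguably more transparent here because it bypasses the determinant expansion entirely and makes the appearance of the single $2\times 2$ minor (hence of $\theta\cdot\nu$) immediate; the paper's characteristic-polynomial device, on the other hand, yields both trace and cofactor identities from a single expansion and needs no separate bookkeeping for the cross terms.
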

\begin{proof}
We first observe that for any $r\in\R$
\begin{align*}
	&(\theta\cdot \nu)\Big(-r \tr\cof L + r^2 \tr L -r^3\Big) \notag\\
	=\, &\det (\tau_1 | \tau_2 | \theta)\det(L-r\Id) \notag\\
	=\, & -r (L\tau_1 \times L\tau_2)\cdot \theta + r^2 (\tau_1\times L\tau_2 -\tau_2\times L\tau_1)\cdot \theta -r^3\theta\cdot\nu
\end{align*}
and we deduce that firstly, by \eqref{eq:aux1}
\begin{align*}
	(\theta\cdot\nu) \tr L \,&=\,   (\tau_1\times L\tau_2 -\tau_2\times L\tau_1)\cdot \theta \\
	&=\, \sum_{i,j,k=1}^3
	\big( \tau_{1,i} e_j\cdot L\tau_2 - \tau_{2,i} e_j\cdot L\tau_1\big) \theta_k \perm_{ijk}\\
	&=\, \sum_{i,j,k=1}^3 
	\xi_1^{ij} \theta_k\perm_{ijk}\,
	=\, \sum_{i<j}\sum_{k=1}^3 
	(\xi_1^{ij}-\xi_1^{ji})\theta_k\perm_{ijk},
\end{align*}
which yields \eqref{eq:trL-xi}.
Secondly, we have by \eqref{eq:rep-xi0} and since $L\tau_i\perp\theta$
\begin{align*}
	(\theta\cdot\nu) \tr\cof L \,&=\, (L\tau_1 \times L\tau_2)\cdot \theta \,=\, \frac{1}{\theta\cdot\nu}(L\tau_1 \times L\tau_2)\cdot \nu \,=\, \frac{1}{\theta\cdot\nu} (L\tau_1 \wedge L\tau_2)\cdot \xi_0. 
\end{align*}
Moreover, by \cite[Prop.\,3.21]{Serr10} we have 
\begin{align*}
	\theta\cdot \cof (\tau_1 \otimes D\theta \tau_2 -\tau_2 \otimes D\theta \tau_1)\theta \,&=\, \det (\tau_1 \otimes D\theta \tau_2 -\tau_2 \otimes D\theta \tau_1 + \theta\otimes \theta)\,=:\,T
\end{align*}
and representing the matrix with respect to the bases $\{v_1,v_2,\theta\}$ from Lemma \ref{lem:evDtheta}
\begin{align*}
	T &=\, \det \left(\begin{matrix} L\tau_2\cdot v_1 & L\tau_2\cdot v_2 & \tau_1\cdot\theta\\
	-L\tau_1\cdot v_1 & -L\tau_1\cdot v_2 & \tau_2\cdot\theta\\
	0& 0 & \nu\cdot\theta\end{matrix}\right)\,=\, (\nu\cdot\theta)\det \left(\begin{matrix} L\tau_1\cdot v_1 & L\tau_1\cdot v_2 \\
	L\tau_2\cdot v_1 & L\tau_2\cdot v_2 \end{matrix}\right)\\
	&=\, (\nu\cdot\theta)\evla\evmu \det \left(\begin{matrix} \tau_1\cdot v_1 & \tau_1\cdot v_2 \\
	\tau_2\cdot v_1 & \tau_2\cdot v_2 \end{matrix}\right)
	\,=\, (\nu\cdot\theta)^2 \evla\evmu \,=\, (\theta\cdot\nu)^2 \tr\cof L.
\end{align*}

The identities \eqref{propL1} follow from the symmetry of $L$ and since $L\theta=0$.
\end{proof}

We next rephrase the functional $\Qfu$ defined in \eqref{eq:def-Qfu} as a functional on currents in $\Omega\times \R^3$.
\begin{lemma}\label{lem:Qfu-currents}
Fix $y\in S^2$ and consider
\begin{align}
	\Xsp_y \,:=\, \{ \zeta\in \Lambda_1(\R^3_x)\wedge\Lambda_1(\R^3_y)\,:\, \sum_{\alpha=1}^3{\zeta}^{k\alpha}y_\alpha=\sum_{h=1}^3{\zeta}^{hh}=0\text{ for all }k=1,2,3\}. \label{eq:def-Xsp}
\end{align}
We define $f_y:\Xsp_y \to [0,\infty]$ by
\begin{align}
	f_y(\zeta) \label{eq:def-f}
	\,:=\, \frac{1}{4}\langle \Psi_y,\zeta\rangle^2-\frac{1}{6}y \cdot {\rm cof}\,\zeta y.
\end{align}
For $(S,\theta)\in\Msp$ consider the graph $G$ as in \eqref{eq:def-GG} and the simple unit 2-vector field $\eta$ as in \eqref{eq:def-xi}.
Then
\begin{align}
	\int_S Q(L(p))\,d\Ha^2(p)
	\,=\, \int_{G^\ast} \frac{1}{|\frac{\eta_0}{|\eta_0|}\wedge y|^2} f_y\bigg(\frac{\eta_1}{|\eta_0|}(x,y)\bigg)|\eta_0|(x,y) \,d\Ha^2(x,y). \label{eq:Qfu-1}
\end{align}
\end{lemma}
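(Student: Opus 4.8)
The plan is to reduce the integral identity \eqref{eq:Qfu-1} to a pointwise algebraic identity on $S$ and then transport it to $G^\ast$ by means of the area formula for the parametrization $\pGG$. As a preliminary step I would check that the object on which $f_y$ is evaluated is admissible: by \eqref{propL1} the mixed stratification $\xi_1$ satisfies $\sum_j\xi_1^{ij}\theta_j=0$ for all $i$ and $\sum_k\xi_1^{kk}=0$, which are precisely the two constraints in \eqref{eq:def-Xsp} defining $\Xsp_{y}$ with $y=\theta(p)$. Hence $f_{\theta(p)}(\xi_1(p))$ is well defined for every $p\in S$.

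The first main step is the pointwise identity. Using the two representations \eqref{eq:trL-xi-2} and \eqref{eq:Q-xi} from Lemma \ref{lem:Q-xi} and the definition \eqref{eq:def-Q} of $Q$, one gets, evaluating $L,\theta,\nu$ at $p$ and $\xi$ at $(p,\theta(p))$,
\[
	Q(L)=\frac14(\tr L)^2-\frac16\tr\cof L=\frac{1}{(\theta\cdot\nu)^2}\Bigl(\frac14\langle\Psi_{\theta},\xi_1\rangle^2-\frac16\,\theta\cdot\cof\xi_1\,\theta\Bigr)=\frac{1}{(\theta\cdot\nu)^2}f_{\theta}(\xi_1),
\]
so that $Q(L)$ differs from $f_{\theta}(\xi_1)$ exactly by the scalar factor $(\theta\cdot\nu)^{-2}$.

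The second step is to rewrite everything in terms of $\eta=\xi/|\xi|$. Since $|\xi_0|=1$ by Lemma \ref{lem:xi}, we have $|\eta_0|=1/|\xi|$ and therefore $\eta_1/|\eta_0|=\xi_1$ and $\eta_0/|\eta_0|=\xi_0=*\nu$. Reading $y$ as a $1$-vector in $\R^3$, the wedge $(\eta_0/|\eta_0|)\wedge y=(*\nu)\wedge\theta=\tau_1\wedge\tau_2\wedge\theta$ is a top form whose length equals $|\det(\tau_1|\tau_2|\theta)|=\theta\cdot\nu>0$. Combining this with the previous display shows that the integrand on the right of \eqref{eq:Qfu-1}, pulled back by $\pGG$, equals
\[
	\frac{1}{|\frac{\eta_0}{|\eta_0|}\wedge y|^2}\,f_y\Bigl(\frac{\eta_1}{|\eta_0|}\Bigr)\circ\pGG=\frac{1}{(\theta\cdot\nu)^2}f_{\theta}(\xi_1)=Q(L).
\]

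The final step is the area formula applied to $\pGG\colon S\to G$. By \eqref{eq:xi-jac} we have $\jac\pGG=|\xi|\circ\pGG$, while $|\eta_0|\circ\pGG=1/|\xi|$, so the weight and the Jacobian cancel: $(|\eta_0|\circ\pGG)\,\jac\pGG=1$. Moreover $|\eta_0|=1/|\xi|>0$ everywhere on $G$ (note $|\xi|$ is finite by \eqref{eq:jac2}), whence $\Ha^2(G\setminus G^\ast)=0$ and integrating over $G^\ast$ is the same as over $G$. Therefore
\[
	\int_{G^\ast}\frac{1}{|\frac{\eta_0}{|\eta_0|}\wedge y|^2}f_y\Bigl(\frac{\eta_1}{|\eta_0|}\Bigr)|\eta_0|\,d\Ha^2=\int_S Q(L)\,(|\eta_0|\circ\pGG)\,\jac\pGG\,d\Ha^2=\int_S Q(L)\,d\Ha^2,
\]
which is exactly \eqref{eq:Qfu-1}. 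Once Lemmas \ref{lem:Q-xi} and \ref{lem:xi} are available the computation is essentially bookkeeping; I expect the only genuinely delicate point to be the correct interpretation and evaluation of the geometric normalization $|\frac{\eta_0}{|\eta_0|}\wedge y|$ (reading $y$ as a vector in $\R^3_x$ so that the wedge is a top form), together with checking that its square reproduces precisely the factor $(\theta\cdot\nu)^{-2}$ cancelling the two $(\theta\cdot\nu)$ factors produced by Lemma \ref{lem:Q-xi}, and confirming the elementary identity $\eta_1/|\eta_0|=\xi_1$.
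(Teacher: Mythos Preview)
Your proof is correct and follows essentially the same route as the paper's: verify via \eqref{propL1} that $\xi_1\in\Xsp_{\theta(p)}$, use Lemma~\ref{lem:Q-xi} to obtain the pointwise identity $Q(L)=(\theta\cdot\nu)^{-2}f_\theta(\xi_1)$, then transport to $G^\ast$ with the area formula for $\pGG$ together with $\eta=\xi/|\xi|$ and $|\xi_0|=1$. If anything, you are more explicit than the paper in spelling out the identification $\big|\tfrac{\eta_0}{|\eta_0|}\wedge y\big|=\theta\cdot\nu$ (reading $y$ in $\R^3_x$), the relation $\eta_1/|\eta_0|=\xi_1$, and the fact that $G=G^\ast$ here since $|\eta_0|=1/|\xi|>0$ everywhere on $G$.
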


\begin{proof}
By \eqref{propL1} we have $\eta_1(x,y)\in \Xsp_y$ for all $(x,y)\in G$. Moreover \eqref{eq:trL-xi} and \eqref{eq:Q-xi} yield
\begin{align}
	\int_S Q(L(p))\,d\Ha^2(p) \,
	&=\, \int_S \Big(\frac{1}{4}(\tr\,L(p))^2-\frac{1}{6}\tr {\rm cof}\,L(p) \Big)\,d\Ha^2(p) \notag\\
	&=\, \int_S \frac{1}{(\theta \cdot \nu)^2}\Big(\frac{1}{4}(\langle \Psi_y,\xi_1\rangle)^2 -\frac{1}{6}
	\theta\cdot \cof \xi_1 \theta\Big)\,d\Ha^2 \notag\\
	&=\, \int_{G^\ast} \frac{1}{|\frac{\eta_0}{|\eta_0|}\wedge y|^2}\bigg[\frac{1}{4}\Big(\langle \Psi_y,\frac{\eta_1}{|\eta_0|}\rangle\Big)^2-\frac{1}{6}y \cdot \cof\frac{\eta_1}{|\eta_0|}y\bigg] {|{\eta}_0|}\,d\Ha^2(x,y)
\end{align}
where $\eta$ is evalutated at $(x,y)$, and where we have used that $G=\pGG(S)$ and
$$
	\eta \,=\, \frac{\xi}{|\xi|}, \quad |\xi|=\frac{1}{|{\eta}_0|}=|\det D\pGG|
$$
which yields the conclusion.
\end{proof}
We will next show that $f$ has suitable convexity properties. For this it is more convenient to represent $\zeta\in \Lambda_1(\R^3_x)\wedge\Lambda_1(\R^3_y)$ as a vector in $\R^9$ and $\zeta\mapsto f_y(\zeta)$  as a quadratic form.
\begin{lemma}\label{lem:qform}
Let us fix $(x,y)\in \Omega\times S^2$ and define an isomorphism $\Lambda_1(\R^3_x)\wedge\Lambda_1(\R^3_y)\to\R^9$, $\zeta\mapsto u=u{[\zeta]}\in\R^9$ by
\begin{align*}
	u \,:=\, ( \zeta^{11}, \zeta^{12}, \zeta^{13}, \zeta^{21}, \zeta^{22}, \zeta^{23}, \zeta^{31}, \zeta^{32}, \zeta^{33})^T. 
\end{align*}
Then $f_y$ as in \eqref{eq:def-f} transforms to a quadratic form $u\,\mapsto\, u\cdot A_y u$ on $\R^9$, where 
\begin{align*}
	A_y\,:=\, 
	\begin{pmatrix}
		0&0&0&0&-y_3 ^2&y_2 y_3 &0&y_2 y_3 &-y_2 ^2\\
		0&3y_3 ^2&-3y_2 y_3 &-2y_3 ^2&0&2y_1y_3 &2y_2 y_3 &-3y_1y_3 &y_1y_2 \\
		0&-3y_2 y_3 &3y_2 ^2&2y_2 y_3 &y_1y_3 &-3y_1y_2 &-2y_2 ^2&2y_1y_2 &0\\
		0&-2y_3 ^2&2y_2 y_3 &3y_3 ^2&0&-3y_1y_3 &-3y_2 y_3 &2y_1y_3 &y_1y_2 \\
		-y_3 ^2&0&y_1y_3 &0&0&0&y_1y_3 &0&-y_1^2\\
		y_2 y_3 &2y_1y_3 &-3y_1y_2 &-3y_1y_3 &0&3y_1^2&2y_1y_2 &-2y_1^2&0\\
		0&2 y_2 y_3 &-2y_2 ^2&-3y_2 y_3 &y_1y_3 &2y_1y_2 &3y_2 ^2&-3y_1y_2 &0\\
		y_2 y_3 &-3y_1y_3 &2y_1y_2 &2y_1y_3 &0&-2y_1^2&-3y_1y_2 &3y_1^2&0\\
		-y_2 ^2&y_1y_2 &0&y_1y_2 &-y_1^2&0&0&0&0
	\end{pmatrix}.
\end{align*}
Next let
\begin{align*}
	v^{(-1)} &:= \begin{pmatrix} y_1 ^2 -1 \\y_1 y_2  \\y_1 y_3\\y_1 y_2  \\y_2 ^2 -1 \\y_2 y_3 \\y_1 y_3 \\y_2 y_3 \\y_3 ^2-1  \end{pmatrix},\, 
	v^{(5)} :=\begin{pmatrix}0\\-y_3 \\y_2 \\y_3 \\0\\-y_1 \\-y_2 \\y_1 \\0\end{pmatrix},\, 
	v^{(1)}_1 :=\begin{pmatrix}2 y_1y_2y_3 \\ y_2 ^2 y_3- y_3\\y_2 y_3^2 -y_1^2y_2 \\ y_2 ^2y_3 - y_3 \\0\\
	y_1  -y_1 y_2 ^2 \\y_2 y_3^2 -y_1^2y_2  \\y_1  -y_1 y_2 ^2 \\-2y_1 y_2 y_3 \end{pmatrix},\, 
	v^{(1)}_2 :=\begin{pmatrix}y_1 (y_2^2-y_3^2) \\y_2 ^3-y_2 \\y_2 ^2y_3 +y_1 ^2y_3 \\y_2 ^3-y_2 \\y_1 -y_1 y_2 ^2\\0\\
	 y_2 ^2y_3 +y_1 ^2y_3\\0\\-y_1 ^3-y_1 y_2 ^2\end{pmatrix},\\
	v^{(0)}_1 &:= (y,0,0)^T,\quad
	v^{(0)}_2 := (0,y,0)^T,\quad
	v^{(0)}_3 := (0,0,y)^T,\\
	v^{(0)}_4 &:= (y_1 e_1,y_2 e_1, y_3 e_1)^T,\quad
	v^{(0)}_5 := (y_1 e_2, y_2 e_2, y_3 e_2)^T,\quad
	v^{(0)}_6 := (y_1 e_3, y_2 e_3, y_3 e_3)^T,
\end{align*}
where $\{e_1,e_2,e_3\}$ denotes the standard basis of $\R^3$.
Then these vectors are eigenvectors of $A_y$ with corresponding eigenvalues $-1,0,1,5$, more precisely
$$
	A_yv^{(-1)}=-v^{(-1)}, \quad A_yv^{(5)}=5v^{(5)}, \quad A_yv_i^{(1)}=v_i^{(1)}, \quad A_yv_j^{(0)}=0
$$
for $i=1,2$ and $j=1,\dots,6$. Moreover, 
\begin{align*}
	\R^9 \,&=\, \spann \{v^{(-1)},v^{(5)},v_1^{(1)},v^{(1)}_2,v^{(0)}_1,\dots,v^{(0)}_6\},\\
	5 \,&=\, \dim \spann \{v^{(0)}_1,\dots,v^{(0)}_6\}.
\end{align*}
By the bijection $\zeta^1\,\mapsto\, u[\zeta^1]$ the space $\Xsp_y$ from \eqref{eq:def-Xsp} transforms to
\begin{align}
	\tilde{\Xsp_y}\,&:=\, \big\{u\in \R^{9}\,:\, u\perp \spann\{ (y,0,0)^T, (0,y,0)^T, (0,0,y)^T,(e_1,e_2,e_3)^T\}\big\}. \label{eq:trsfd-Xy}
\end{align}
Then
\begin{align}
	v^{(-1)}\,&\perp\, \tilde{\Xsp_y}, \label{eq:Xsp-cap1}\\
	\spann \{v^{(0)}_j\,:\, j=1,2,3\} \,&\perp\, \tilde{\Xsp_y}.\label{eq:Xsp-cap0}
\end{align}
\end{lemma}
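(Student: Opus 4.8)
The plan is to reduce the intrinsic quadratic form $f_y$ of \eqref{eq:def-f} to the explicit matrix $A_y$ and then to carry out ordinary linear algebra on $\R^9$. First I would establish the representation $f_y(\zeta)=u[\zeta]\cdot A_yu[\zeta]$. Expanding $\langle\Psi_y,\zeta\rangle=\sum_{i,k,l}\perm_{ikl}y_i\zeta^{kl}$ as a linear functional of the nine coordinates $\zeta^{kl}$, the term $\frac14\langle\Psi_y,\zeta\rangle^2$ is the rank-one quadratic form built from the outer product of this covector with itself. The nine cofactors $(\cof\zeta)_{kl}$ are explicit homogeneous quadratics in the $\zeta^{k'l'}$, so $-\frac16\,y\cdot\cof\zeta\,y=-\frac16\sum_{k,l}y_ky_l(\cof\zeta)_{kl}$ is likewise an explicit quadratic form. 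Adding the two contributions and symmetrising the resulting coefficient array produces a symmetric $9\times 9$ matrix, which one checks entry by entry to be the stated $A_y$. This step carries essentially all of the labour; it is routine but long, and the points to watch are the signs in $\perm_{ikl}$ and the even splitting of each cross term $\zeta^{kl}\zeta^{k'l'}$ between the two symmetric off-diagonal positions.

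Next I would verify the eigenvalue equations $A_yv=\lambda v$ for the listed vectors; each is a polynomial identity in $y$ that becomes an equality after inserting $|y|^2=1$, hence reduces to a direct matrix--vector product. It is convenient to note that $v^{(0)}_1,v^{(0)}_2,v^{(0)}_3$ are the coordinate vectors of the matrices $e_i\otimes y$, $i=1,2,3$, and $v^{(0)}_4,v^{(0)}_5,v^{(0)}_6$ those of $y\otimes e_j$, $j=1,2,3$. Rather than read the spectrum off these explicit eigenvectors -- which degenerate when $y$ is aligned with a coordinate axis, e.g.\ $v^{(1)}_2=0$ at $y=e_3$ -- I would fix it by equivariance: since $\perm$ is $SO(3)$-invariant and $\cof(R\zeta R^T)=R\,\cof(\zeta)\,R^T$ for $R\in SO(3)$, one gets $f_{Ry}(R\zeta R^T)=f_y(\zeta)$ (identifying $\zeta$ with the matrix $(\zeta^{kl})$), so that $A_y$ and $A_{e_3}$ are conjugate by the orthogonal map on $\R^9$ induced by $\zeta\mapsto R\zeta R^T$, for any $R$ with $Ry=e_3$. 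Writing $u_1,\dots,u_9$ for the standard basis of $\R^9$, the matrix $A_{e_3}$ splits into the blocks $\smat{0 & -1 \\ -1 & 0}$ on $\spann\{u_1,u_5\}$ and $\smat{3 & -2 \\ -2 & 3}$ on $\spann\{u_2,u_4\}$ together with the kernel $\spann\{u_3,u_6,u_7,u_8,u_9\}$; its eigenvalues are therefore $-1,1,1,5$ and $0$ with multiplicity five, and the same spectrum holds for every $y\in S^2$.

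For the span and dimension statements I would combine this spectrum with the kernel computation. Since $\spann\{v^{(0)}_1,v^{(0)}_2,v^{(0)}_3\}=\{v\otimes y:v\in\R^3\}$ and $\spann\{v^{(0)}_4,v^{(0)}_5,v^{(0)}_6\}=\{y\otimes w:w\in\R^3\}$ meet exactly in $\spann\{y\otimes y\}$, their joint span has dimension $3+3-1=5$; it lies in $\ker A_y$, and as the spectrum forces $\dim\ker A_y=5$ we obtain $5=\dim\spann\{v^{(0)}_1,\dots,v^{(0)}_6\}$. Because $A_y$ is symmetric, eigenvectors from distinct eigenvalues are independent, so for generic $y$ (where $v^{(1)}_1,v^{(1)}_2$ are independent) the listed vectors span $\R^9$; the trace identity $\tr A_y=6|y|^2=6=(-1)+5+1+1$ corroborates the multiplicities.

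Finally I would treat the constraint space. Translating \eqref{eq:def-Xsp}, the three relations $\sum_\alpha\zeta^{k\alpha}y_\alpha=0$ become $u\perp(y,0,0)^T,(0,y,0)^T,(0,0,y)^T$ and the trace relation $\sum_h\zeta^{hh}=0$ becomes $u\perp(e_1,e_2,e_3)^T$, which is exactly \eqref{eq:trsfd-Xy}; these four normals are independent (a nonzero coefficient on $(e_1,e_2,e_3)^T$ would force $y$ parallel to all three $e_i$), so $\tilde{\Xsp_y}$ is five-dimensional. Then \eqref{eq:Xsp-cap0} is immediate, since $v^{(0)}_1,v^{(0)}_2,v^{(0)}_3$ are three of these four normals. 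For \eqref{eq:Xsp-cap1} I would recognise $v^{(-1)}$ as the coordinate vector of $y\otimes y-\Id$ and write $y\otimes y=\sum_iy_iv^{(0)}_i$ and $\Id=(e_1,e_2,e_3)^T$, whence $v^{(-1)}\in\spann\{v^{(0)}_1,v^{(0)}_2,v^{(0)}_3,(e_1,e_2,e_3)^T\}=\tilde{\Xsp_y}^{\perp}$ and therefore $v^{(-1)}\perp\tilde{\Xsp_y}$. The one genuine obstacle is the explicit derivation of $A_y$ in the first step; everything else is linear-algebra bookkeeping, the only subtlety being the axis-aligned degeneracy of the explicit eigenvectors, which the equivariance reduction removes.
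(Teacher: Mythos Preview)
Your proposal is correct and considerably more detailed than the paper's own proof, which consists of the single sentence ``The claims follow by straightforward calculations'' together with the decomposition
\[
v^{(-1)} \,=\, y_1(y,0,0)^T + y_2(0,y,0)^T + y_3(0,0,y)^T - (e_1,e_2,e_3)^T
\]
for \eqref{eq:Xsp-cap1}. Your observation that $v^{(-1)}$ is the coordinate vector of $y\otimes y-\Id$ is exactly this identity.

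The genuinely different ingredient in your argument is the $SO(3)$-equivariance reduction to $y=e_3$. The paper implicitly asks the reader to verify $A_yv=\lambda v$ for each listed vector as a polynomial identity in $y$ modulo $|y|^2=1$; you instead conjugate $A_y$ to $A_{e_3}$ via the orthogonal map $\zeta\mapsto R\zeta R^T$ and read the spectrum off the block structure at $e_3$. This is cleaner, and it also resolves a point the paper does not address: the explicit eigenvectors $v^{(1)}_1,v^{(1)}_2$ degenerate for axis-aligned $y$ (e.g.\ $v^{(1)}_2=0$ at $y=e_3$), so the span statement $\R^9=\spann\{v^{(-1)},v^{(5)},v^{(1)}_1,v^{(1)}_2,v^{(0)}_1,\dots,v^{(0)}_6\}$ literally fails there. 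Your equivariance argument shows that the spectrum $\{-1,1,1,5,0,0,0,0,0\}$ is nonetheless correct for every $y\in S^2$, which is all that Proposition~\ref{prop:f} actually needs. The paper's brute-force route is shorter to state but hides this issue; your route is longer to explain but more robust.
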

\begin{proof}
The claims follow by straightforward calculations. For \eqref{eq:Xsp-cap1} observe that
\begin{align}
	v^{(-1)} \,=\,  y_1(y,0,0)^T + y_2(0,y,0)^T + y_3(0,0,y)^T -(e_1,e_2,e_3)^T. \label{eq:v-1}
\end{align}
\end{proof}
The previous lemma shows that $u\mapsto u\cdot A_yu$ behaves nicely outside the eigenspaces corresponding to the eigenvalues $-1$, $0$. The relevant space $\Xsp_y$ is orthogonal to $v^{(-1)}$ and $v^{(0)}_{1,2,3}$. For a generalized Gauss graph, we obtain that $\Xsp_y$ is orthogonal to the full zero eigenspace, too. In our situation this is not the case, but the projection onto that eigenspace is small, see Lemma \ref{lem:4.9}. We therefore consider a suitable modification of the quadratic form $u\mapsto u\cdot A_yu$.
\begin{proposition}\label{prop:f}
Define for $(x,y)\in \Omega\times S^2$ mappings $F_y:\R^9\,\to\,\R$ by
\begin{align}
	F_y(u)\,:=\, u\cdot A_y u + |\pi_0 u|^\frac{3}{2} + 2|\pi_{-1}u|^2, \label{eq:def-Fu}
\end{align}
and
\begin{align}
	\tilde{f}\,:\, \Omega\times S^2\times \big(\Lambda_1(\R^3_x)\wedge\Lambda_1(\R^3_y)\big)\,\to\,\R,\qquad
	\tilde{f}(x,y,\zeta)\,:=\, F_y(u[\zeta]), \label{eq:def-tildef}
\end{align}
where $A_y$, and $u[\cdot]$ have been defined in Lemma \ref{lem:qform} and where $\pi_0, \pi_{-1}$ denote the orthogonal projections on $\spann\{v_j^{(0)},\,j=1,\dots,6\}$ and on $\spann\{v^{(-1)}\}$, respectively. 

Then $\tilde{f}$ is continuous, nonnegative, convex in the third variable and has uniform super-linear growth in the third variable in the sense that
\begin{align}
	\tilde{f}(x,y,\zeta) \,\geq\, |u[\zeta]-\pi_{0}u[\zeta]|^2  + |\pi_0 u[\zeta]|^\frac{3}{2}. \label{eq:growth}
\end{align}
\end{proposition}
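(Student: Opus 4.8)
The plan is to diagonalize the quadratic form $u\mapsto u\cdot A_yu$ by means of the spectral decomposition established in Lemma \ref{lem:qform}, after which all three asserted properties become essentially transparent. First I would record that, for every $y\in S^2$, the matrix $A_y$ is symmetric (as one checks directly from its explicit form) and, by Lemma \ref{lem:qform}, has precisely the eigenvalues $-1,0,1,5$ with corresponding eigenspaces $E_{-1}=\spann\{v^{(-1)}\}$, $E_0=\spann\{v^{(0)}_1,\dots,v^{(0)}_6\}$ of dimension $5$, $E_1=\spann\{v^{(1)}_1,v^{(1)}_2\}$, and $E_5=\spann\{v^{(5)}\}$, whose dimensions $1+5+2+1=9$ exhaust $\R^9$. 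Writing $u=u_{-1}+u_0+u_1+u_5$ for the associated orthogonal decomposition (the eigenspaces are mutually orthogonal by symmetry) we have $\pi_0u=u_0$, $\pi_{-1}u=u_{-1}$, and
\[
	u\cdot A_yu \,=\, -|u_{-1}|^2+|u_1|^2+5|u_5|^2.
\]
Inserting this into the definition \eqref{eq:def-Fu} of $F_y$, the two contributions proportional to $|u_{-1}|^2$ combine, yielding
\[
	F_y(u)\,=\,|u_{-1}|^2+|u_1|^2+5|u_5|^2+|u_0|^{\frac32},
\]
a sum of nonnegative terms; this proves nonnegativity at once.

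Convexity of $\tilde f$ in the third variable then follows because $\zeta\mapsto u[\zeta]$ is linear while each summand above is a convex function of $u$: the terms $|u_{-1}|^2$, $|u_1|^2$, $|u_5|^2$ are squared norms of the linear projections of $u$ onto the respective eigenspaces, and $|u_0|^{3/2}=|\pi_0u|^{3/2}$ is a norm of a linear projection raised to the power $\tfrac32\geq 1$, hence convex; a sum of convex functions precomposed with a linear map is convex. For the growth estimate \eqref{eq:growth} I would simply discard the favorable factor $5$ in front of $|u_5|^2$: since $u-\pi_0u=u_{-1}+u_1+u_5$ is an orthogonal decomposition,
\[
	F_y(u)\,\geq\, |u_{-1}|^2+|u_1|^2+|u_5|^2+|u_0|^{\frac32}\,=\,|u-\pi_0u|^2+|\pi_0u|^{\frac32},
\]
which is exactly the claimed lower bound.

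The only delicate point, and the one I expect to be the main obstacle, is continuity: since $\tilde f$ involves the $y$-dependent projections $\pi_0$ and $\pi_{-1}$, one must check that these vary continuously as $y$ ranges over $S^2$. Here I would exploit that the eigenvalues $-1,0,1,5$ and their multiplicities are \emph{independent} of $y$. Consequently, by the Lagrange (Riesz) interpolation formula, the spectral projection onto a fixed eigenvalue $\lambda$ is the polynomial $\prod_{\mu\neq\lambda}(A_y-\mu\,\Id)/(\lambda-\mu)$ in $A_y$; for instance $\pi_{-1}=\tfrac{-1}{12}A_y(A_y-\Id)(A_y-5\,\Id)$ and $\pi_0=\tfrac15(A_y+\Id)(A_y-\Id)(A_y-5\,\Id)$. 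Since the entries of $A_y$ are polynomials in $y$, the entries of $\pi_0$ and $\pi_{-1}$ are polynomial, and in particular continuous, in $y$. Because $F_y$ does not depend on $x$ and depends continuously on $(y,u)$ through $A_y$, $\pi_0$, and $\pi_{-1}$, and because $\zeta\mapsto u[\zeta]$ is a fixed linear isomorphism, the map $\tilde f(x,y,\zeta)=F_y(u[\zeta])$ is continuous on all of $\Omega\times S^2\times\big(\Lambda_1(\R^3_x)\wedge\Lambda_1(\R^3_y)\big)$, completing the proof.
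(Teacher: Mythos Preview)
Your proof is correct and follows essentially the same route as the paper: diagonalize $u\mapsto u\cdot A_yu$ via the spectral decomposition of Lemma \ref{lem:qform}, observe that the $-|\pi_{-1}u|^2$ and $2|\pi_{-1}u|^2$ terms combine to $+|\pi_{-1}u|^2$, and read off nonnegativity, convexity, and the growth bound. The paper's proof is terser but identical in substance; your treatment of continuity via the Lagrange interpolation formula for the spectral projections is in fact more careful than the paper's, which simply declares continuity ``clear from the definition''.
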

\begin{proof}
The continuity of $\tilde{f}$ is clear from the definition, to prove the other claims it is sufficient to show that $F_y$ is nonnegative, convex, and satisfies \eqref{eq:growth}.\\
We let $\pi_1,\pi_5,\pi_0,\pi_{-1}$ denote the orthogonal projection on the corresponding eigenspaces and compute for an arbitrary $u\in\R^9$
\begin{align*}
	F_y(u)\,&=\, - |\pi_{-1}u|^2  +|\pi_{1}u|^2  + 5|\pi_{5}u|^2  + |\pi_0 u|^\frac{3}{2} + 2|\pi_{-1}u|^2
	\\&=\, |\pi_{-1}u|^2  +|\pi_{1}u|^2  + 5|\pi_{5}u|^2  + |\pi_0 u|^\frac{3}{2}.
\end{align*}
We therefore deduce that $F_y$ is convex, and that
\begin{align*}
	F_y(u)\,&\geq\, |u-\pi_{0}u|^2  + |\pi_0 u|^\frac{3}{2}
\end{align*}
which proves \eqref{eq:growth}.
\end{proof}
The following lemma characterizes the norm of $\pi_0 u$ in \eqref{eq:def-Fu}. Note that $\{{v}^{(0)}_1,\dots,{v}^{(0)}_6\}$ is not an orthonormal system, in particular it is in general not true that $v^{(0)}_{i+3}\cdot v^{(0)}_j=0$ for $1\leq i,j\leq 3$.
\begin{lemma}\label{lem:4.9}
Let $y\in S^2$. For $u\in \tilde{\Xsp}_y$ and the orthogonal projection 
$$
\pi_0:\R^9\,\to\, \spann\{{v}^{(0)}_1,\dots,{v}^{(0)}_6\}
$$ 
on the zero eigenspace of $A_y$ as above we have
\begin{align}
	|\pi_0 u|^2\,=\, \sum_{i=4}^6 ({v}^{(0)}_i\cdot u)^2. \label{eq:norm-pi0}
\end{align}
\end{lemma}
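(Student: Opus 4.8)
The plan is to pass, via the isomorphism $u[\cdot]$ of Lemma \ref{lem:qform}, to the picture of $3\times 3$ matrices equipped with the Frobenius inner product, under which the Euclidean inner product on $\R^9$ is reproduced. In this picture $v^{(0)}_a = e_a y^T$ and $v^{(0)}_{3+a} = y e_a^T$ for $a=1,2,3$, so that each of the two triples $\{v^{(0)}_1,v^{(0)}_2,v^{(0)}_3\}$ and $\{v^{(0)}_4,v^{(0)}_5,v^{(0)}_6\}$ is orthonormal (here $|y|=1$ is used), while the cross inner products are $\langle v^{(0)}_a, v^{(0)}_{3+b}\rangle = y_a y_b$. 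First I would record the single linear relation $\sum_a y_a v^{(0)}_a = \sum_a y_a v^{(0)}_{3+a} = yy^T =: q$, which accounts for the drop to $\dim E_0 = 5$, where $E_0 := \spann\{v^{(0)}_1,\dots,v^{(0)}_6\}$ is the zero eigenspace of $A_y$ onto which $\pi_0$ projects.

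The main reduction is to prove that for $u\in\tilde{\Xsp}_y$ the projection onto $E_0$ coincides with the projection onto the orthonormal subtriple, $\pi_0 u = \pi_{W_2} u$, where $W_2 := \spann\{v^{(0)}_4,v^{(0)}_5,v^{(0)}_6\}$. Once this is established, orthonormality of $\{v^{(0)}_4,v^{(0)}_5,v^{(0)}_6\}$ gives at once $\pi_{W_2} u = \sum_{i=4}^6 \langle v^{(0)}_i, u\rangle v^{(0)}_i$, hence $|\pi_0 u|^2 = |\pi_{W_2}u|^2 = \sum_{i=4}^6 (v^{(0)}_i\cdot u)^2$, which is \eqref{eq:norm-pi0}.

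To prove $\pi_0 u = \pi_{W_2} u$ I would check that $u - \pi_{W_2} u$ is orthogonal to all of $E_0$; since $\pi_{W_2}u\in W_2\subset E_0$, uniqueness of the orthogonal decomposition then forces $\pi_0 u = \pi_{W_2} u$. Orthogonality to $W_2$ is automatic. For orthogonality to $W_1 := \spann\{v^{(0)}_1,v^{(0)}_2,v^{(0)}_3\}$ I use $u\perp W_1$ from \eqref{eq:Xsp-cap0}, so it remains to show $\pi_{W_2}u\perp W_1$. Here the cross inner products enter: for $a=1,2,3$,
$$\langle \pi_{W_2} u, v^{(0)}_a\rangle = \sum_{b=1}^3 \langle v^{(0)}_{3+b}, u\rangle \langle v^{(0)}_{3+b}, v^{(0)}_a\rangle = y_a \sum_{b=1}^3 y_b \langle v^{(0)}_{3+b}, u\rangle = y_a \langle q, u\rangle,$$
and $\langle q, u\rangle = 0$ because $q = yy^T\in W_1$ while $u\perp W_1$. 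Thus $\pi_{W_2}u\perp v^{(0)}_a$ for each $a$, so $u-\pi_{W_2}u\perp W_1$, and combined with $u-\pi_{W_2}u\perp W_2$ and $E_0 = W_1 + W_2$ the claim follows.

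The step I expect to be the crux is precisely this orthogonality check. The family $\{v^{(0)}_1,\dots,v^{(0)}_6\}$ spanning $E_0$ is neither orthonormal nor linearly independent, so a priori $\pi_0$ genuinely mixes the two triples; the collapse to a projection onto the orthonormal triple $W_2$ is special to the constrained vectors $u\in\tilde{\Xsp}_y$. The computation above isolates the obstruction: the only component of $\pi_{W_2}u$ pointing into $W_1$ is governed by $\langle q, u\rangle$, the coefficient of $u$ along $q=yy^T$, which vanishes on $\tilde{\Xsp}_y$ exactly because $q\in W_1$. Everything else is routine bookkeeping of Frobenius inner products.
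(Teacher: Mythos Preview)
Your argument is correct. Both your proof and the paper's rest on the same three ingredients: each triple $\{v^{(0)}_1,v^{(0)}_2,v^{(0)}_3\}$ and $\{v^{(0)}_4,v^{(0)}_5,v^{(0)}_6\}$ is orthonormal, the cross inner products are $\langle v^{(0)}_a,v^{(0)}_{3+b}\rangle=y_ay_b$, and the constraint $u\in\tilde{\Xsp}_y$ forces $u\perp W_1$. The paper, however, does not prove your intermediate claim $\pi_0 u=\pi_{W_2}u$; instead it writes $u_0:=\pi_0 u=V\alpha+\tilde V\tilde\alpha$ with $V,\tilde V\in\R^{9\times 3}$ the matrices of the two triples, uses $V^Tu_0=0$ to obtain $\alpha=-(\tilde\alpha\cdot y)y$, and then expands $|u_0|^2$ directly to match $|\tilde V^Tu_0|^2$. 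Your route is slightly cleaner: by showing $u-\pi_{W_2}u\perp E_0$ you get the stronger statement $\pi_0 u=\pi_{W_2}u$ for free, and the norm identity is immediate from orthonormality of the second triple, with no coefficient bookkeeping. The key step---that the only $W_1$-component of $\pi_{W_2}u$ is along $q=yy^T$ and that $\langle q,u\rangle=0$ because $q\in W_1$---is exactly the content hidden in the paper's relation $\alpha=-(\tilde\alpha\cdot y)y$.
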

\begin{proof}
Let $V\in \R^{9\times 3}$ denote the matrix that consists of the row vectors ${v}^{(0)}_1,{v}^{(0)}_2,{v}^{(0)}_3$ and let $\tilde{V}\in \R^{9\times 3}$ denote the matrix that consists of the row vectors ${v}^{(0)}_4,{v}^{(0)}_5,{v}^{(0)}_6$. We observe that the corresponding rows form two orthonormal systems and thus
\begin{align}
	V^T V \,=\, \tilde{V}^T\tilde{V} \,=\, \Id_3. \label{eq:VVT}
\end{align}
Write $u_0=\pi_0 u$, then for suitable $\alpha,\tilde{\alpha}\in\R^3$ we have
\begin{align*}
	u_0\,=\, \left(V \,\,\tilde{V}\right)\left(
	\begin{matrix}
		\alpha\\ \tilde{\alpha}
	\end{matrix}\right).
\end{align*}
We further obtain
\begin{align*}
	\tilde{V}^TV \,=\, ( v^{(0)}_{i+3}\cdot v^{(0)}_j )_{1\leq i,j\leq 3} \,=\, (y_iy_j)_{1\leq i,j\leq 3} \,=\, yy^T.
\end{align*}
Since $u\in \tilde{\Xsp}_y$ we deduce that $u_0\perp v^{(0)}_i$, $i=1,2,3$, and therefore
\begin{align*}
	0\,&=\, V^T u_0 \,=\,  V^T\left(V \,\,\tilde{V}\right)\left(
	\begin{matrix}
		\alpha\\ \tilde{\alpha}
	\end{matrix}\right)
	\,=\, \left(\Id \,\,V^T\tilde{V}\right)\left(
	\begin{matrix}
		\alpha\\ \tilde{\alpha}
	\end{matrix}\right) \,=\, \alpha + (\tilde{\alpha}\cdot y)y,
\end{align*}
which implies $\alpha=-(\tilde{\alpha}\cdot y)y$ and $\alpha\cdot y =-\tilde{\alpha}\cdot y$. Similarly we deduce
\begin{align*}
	\tilde{V}^T u_0 \,&=\, ({\alpha}\cdot y)y +  \tilde{\alpha}.
\end{align*}
Putting everything together we obtain
\begin{align*}
	|u_0|^2 \,&=\, (V\alpha +\tilde{V}\tilde{\alpha})^T(V\alpha +\tilde{V}\tilde{\alpha})\\
	&=\, \alpha^TV^TV\alpha + \alpha^T (\tilde{V}^TV)^T\tilde{\alpha}+ \tilde{\alpha}^T \tilde{V}^TV{\alpha} + \tilde{\alpha}^T\tilde{V}^T\tilde{V}\tilde{\alpha}\\
	&=\, |\alpha^2| + 2(\alpha\cdot y)(\tilde{\alpha}\cdot y) + |\tilde{\alpha}|^2\\
	&=\, (\alpha\cdot y)^2 + 2(\alpha\cdot y)(\tilde{\alpha}\cdot y) +|\tilde{\alpha}|^2\,=\, |\tilde{V}^T u_0|^2
	\,=\, \sum_{i=4}^6 ({v}^{(0)}_i\cdot u_0)^2.
\end{align*}
As ${v}^{(0)}_i\cdot (u-u_0)\,=\,0$ the assertion follows.
\end{proof}
We are now ready to complete the proof of Theorem \ref{thm:main3}.

\begin{proof}[Proof of \eqref{eq:main3}]
Let us define $\xi^j:= \frac{\eta^j}{|\eta_0^j|}$ First, we claim that 
\begin{align}
	\int_{G_j}\tilde f\big(x,y,\xi_1^j\big)|\eta_0^j|\,d\mathcal H^2 \,&\leq\, \Lambda+1 \quad\text{ for all }j\leq j_0, \label{eq:1:69-bound}\\
	\liminf_{j\to+\infty}\int_{G_j}\tilde f\big(x,y,\xi_1^j\big)|\eta_0^j|\,d\mathcal H^2\,&\leq\, \liminf_{j\to+\infty} \Qfu_{\e_j}(S_j,\theta_j). \label{claim1:69}
\end{align}
In fact, we have
\begin{align*}
	\tilde{f}_y(\xi_1^j) \,=\, f_y(\xi_1^j) + |\pi_0 u[\xi_1^j]|^\frac{3}{2} + 2|\pi_{-1}u[\xi_1^j]|^2
\end{align*}
and \eqref{eq:Lambda}, \eqref{eq:Qfu-1} and $|y|=1$ for $(x,y)\in {\rm spt}\,G_j$ imply
\begin{align}
	\Lambda\,\geq\, \int_{S_j} Q(L_j(p))\,d\mathcal H^2(p) \,\ge\, \int_{G_j} f_y\big(\xi_1^j\big)|\eta_0^j|\,d\mathcal H^2. \label{eq:6.32bis}
\end{align}
Next for $(x,y)\in \spt G_j$ and for $u^j=u[\xi_1^j]$ we deduce by \eqref{eq:Xsp-cap1} that 
\begin{align}
	\pi_{-1}u^j \,=\, 0. \label{eq:6.32-bbis}
\end{align}
Moreover, by the definition of $u^j$ and $v^{(0)}_{4,5,6}$ and by \eqref{eq:rep-xi1}
\begin{align*}
	 \sum_{k=4}^6 \left({v}^{(0)}_k\cdot u^j\right)^2 \circ \pGG^j \,&=\,  \sum_{k=1}^3 \Big(\sum_{i=1}^3 y_i(\xi_1^j)^{ik}\Big)^2  \circ \pGG^j\\
	 &=\, |(\theta_j\cdot \tau_{j,1})D\theta_j \tau_{j,2}-(\theta_j\cdot\tau_{j,2})D\theta_j \tau_{j,1}|^2
\end{align*}
where $\pGG^j$ is the parametrization of $G_j$ as in \eqref{eq:def-graph}.
Together with \eqref{eq:norm-pi0} this yields
\begin{align*}
	\int_{G_j} \big|\pi_0 u^j[\xi_1^j]\big|^{\frac{3}{2}}|\eta_0^j|
	\,&=\, \int_{G_j} \big|\sum_{k=4}^6 \left({v}^{(0)}_k\cdot u^j[\xi_1^j]\right)^2\big|^\frac{3}{2}|\eta_0^j|\,d\mathcal H^2\\
&=\int_{S_j}\sum_{k=1}^3\big|\theta_j\cdot \tau_{j,1}D\theta_{j,k}\tau_{j,2}-\theta_j\cdot \tau_{j,2}D\theta_{j,k}\tau_{j,1}\big|^\frac{3}{2}\,d\mathcal H^2\\
&\le c\int_{S_j}(1-(\theta_j\cdot \nu_j)^2)^\frac{3}{4}(\lambda_{1,j}^2+\lambda_{2,j}^2)^\frac{3}{4}\,d\mathcal H^2\\
&\le c\Big(\int_{S_j}(1-(\theta_j\cdot \nu_j)^2)^3\,d\mathcal H^2\Big)^\frac{1}{4}\Big(\int_{S_j}(\lambda_{1,j}^2+\lambda_{2,j}^2)\,d\mathcal H^2\Big)^\frac{3}{4}\\
&\le \tilde c \sqrt{\e_j}\Big(\int_{S_j}\frac{1}{\e_j^2}(1-\theta_j\cdot \nu_j)\,d\mathcal H^2\Big)^\frac{1}{4}\Big(\int_{S_j}Q(L_j(p))\,d\mathcal H^2\Big)^\frac{3}{4}\stackrel{j\to +\infty}{\to}0
\end{align*}
and this proves together with \eqref{eq:6.32bis} and \eqref{eq:6.32-bbis} the statements \eqref{eq:1:69-bound}, \eqref{claim1:69}.

Consider now for an integral current $T=\tau(G,\beta,\eta)$  in $\Omega \times \R^3_y$ the functional
$$
H(T):=\int_{G^*}\tilde{f}\big(x,y,\frac{\eta_1}{|\eta_0|}\big)|\eta_0|\beta\,d\mathcal H^2
$$
where $G^*=\{(x,y)\in G : |\eta_0|(x,y)>0\}$ and $\tilde{f} \colon \Omega \times \R^3_y\times \Lambda^2(\R^3_x\times \R^3_y) \to [0,+\infty]$ was defined in \eqref{eq:def-tildef}. By Proposition \ref{prop:f} the function $\tilde{f}$ is continuous, convex in the third component and has uniformly superlinear growth in the third component. Let now $T_j=\tau(G_j,1,\eta^j)$ be the graph currents associated to $(S_j,\theta_j)$ and let $|T_0^j|:=|\eta_0^j|\mathcal H^2\mres G_j$. By \eqref{eq:convT0j} we have $|T_0^j|\weakstar |T^0|$ as Radon measures. Next, we consider the measure-function pairs 
$$
\big(|T_0^j|,\xi_1^j\big).
$$
By \eqref{eq:1:69-bound} and Proposition \ref{prop:f} all assumptions of Theorem 4.4.2 of \cite{Hutc86} are satisfied. Therefore, up to a subsequence, 
\begin{align}
\big(|T_0^j|,\xi_1^j\big) \to (|T_0|,g), \quad g\in L^1_{\rm loc}(|T_0|,\Lambda^2(\R^3_x\times\R^3_y)), \label{eq:conv-T0g}
\end{align}
in the sense of measure-function pairs, that is
\begin{equation}\label{convH}
\lim_{j\to+\infty}\int \langle\phi,\xi_1^j\rangle\,d|T_0^j| \,=\, \int \langle\phi, g\rangle \,d|T|, \quad \textrm{for every }\phi\in C^0_c(\Omega\times\R^3_y,\Lambda^2(\R^3_x\times\R^3_y)).
\end{equation}
Moreover, by \eqref{eq:convT0j}, Proposition \ref{prop:f}, \eqref{claim1:69}, and \cite[Thm.\,4.4.2]{Hutc86} we deduce 
\begin{align}
	\int \tilde{f}(x,y,g(x,y))\,d|T_0| \,&\le\, \liminf_{j\to+\infty}\int \tilde{f}\big(x,y,\xi_1^j\big)\,d|T_0^j| \notag\\
	&\le\, \liminf_{j\to+\infty} \Qfu_{\eps_j} (S_j,\theta_j). \label{eq:liminf-hutch}
\end{align}
From \eqref{convH}, $T_j \weak T$, and \eqref{eq:conv-T0g} we obtain
\begin{align*}
	\int \langle\phi,g\rangle |\eta_0| \,d|T| \,&=\, \int \langle\phi, g\rangle \,d|T_0| \\
	 &=\, \lim_{j\to+\infty}\int \langle\phi,\frac{\eta^j_1}{|\eta_0^j|}\rangle\,d|T_0^j| \,=\,
	 \lim_{j\to\infty} \int \langle\phi ,\eta_1^j\rangle \,d|T^j| \,=\, \int \langle\phi,\eta_1 \rangle\,d|T|
\end{align*}
for all $\phi\in C^0_c(\Omega\times\R^3_y,\Lambda^2(\R^3_x\times\R^3_y))$.
Thus we further obtain $|T_1|\ll |T_0|$ and, by \eqref{eq:liminf-hutch}
\begin{align}
	\liminf_{j\to+\infty} \int_{S_j} Q(L_j(p))\,d\mathcal H^2(p) \ge \int_{G^*} \tilde{f}\Big(x,y,\frac{\eta_1}{|\eta_0|}\Big)|\eta_0(x,y)|\beta(x,y)\,d\mathcal H^2(x,y). \label{eq:energy-000}
\end{align}
On $G^*$ we set $\xi:= \frac{\eta}{|\eta_0|}$ and $u=u[\xi_1]$. Then \eqref{eq:def-Fu}, \eqref{eq:def-tildef} yield
\begin{align}
	\tilde{f}\big(x,y,\xi_1\big) \,\geq\, u\cdot A_y u
	\,=\, \frac{1}{4}\langle \Psi_y,\xi_1\rangle^2 - \frac{1}{6} y\cdot \cof\xi_1 y, \label{eq:energy-001}
\end{align}
where $\Psi_y$ and $\cof\xi_1$ are as in Lemma \ref{lem:Q-xi}. 
From the definition of $\Psi_y$ we deduce that
\begin{align}
	\langle \Psi_y,\xi_1\rangle\,=\, \sum_{k,l}(*y)^{kl}\xi_1^{kl}\,=\, \sum_{k,l}\xi_0^{kl}\xi_1^{kl}. \label{eq:energy-002}
\end{align}
We next obtain for the Gaussian curvature $K$ and the mean curvature $H$ of the curvature varifold $V=V_T$ associated to $T$ (see Definition \ref{def:HutchK})  from Proposition \ref{prop:GGG-CV} and \eqref{eq:Aijk-CV}, \eqref{eq:Hj-CV}, and \eqref{eq:lemLA-2} that
\begin{align}
	\frac{1}{4}H^2(x,y^\perp) - \frac{1}{6}K(x,y^\perp) \,&=\, \frac{1}{4}\sum_j \Big(\sum_i A_{iji}(x,y^\perp)\Big)^2 - \frac{1}{6}\sum_k \tr\cof (A_{ijk}(x,y^\perp))_{ij} \notag\\
	&=\, \frac{1}{4}\Big(\sum_{i,r}\xi_0^{ir}(x,y)\bar{\xi}_1^{ri}(x,y^\perp)\Big)^2 - \frac{1}{6} y\cdot \cof \bar{\xi}_1(x,y^\perp) y \notag\\
	&=\, \frac{1}{4}(\xi_0(x,y):\bar{\xi}_1(x,y^\perp))^2 - \frac{1}{6}\tr \cof \bar{\xi}_1(x,y^\perp), \label{eq:energy-003}
\end{align}
where we have used \eqref{eq:perpT1}, \eqref{eq:perpT2} and \eqref{eq:lemLA-3}.

Now for $x,y$ with $T_x S=y^\perp$ fixed, $u=u[\bar{\xi}_1(x,y^\perp)]$ we find as in Lemma \ref{lem:qform} that 
\begin{align*}
	\frac{1}{4}\big(\xi_0(x,y):\bar{\xi}_1(x,y^\perp)\big)^2 - \frac{1}{6}\tr \cof \bar{\xi}_1(x,y^\perp) \,=\, u\cdot A_y u.
\end{align*}
For the eigenfunctions $v^{(0)}_i, v^{(-1)}$ of $A_y$ as identified in Lemma \ref{lem:qform} we obtain by \eqref{eq:perpT1} that  $u\cdot v^{(0)}_i=\sum_{j=1}^3\bar{\xi}_{ji}y_{j-3}=0$ for $i=4,5,6$.\\
By \eqref{eq:perpT2} we similarly obtain $u\cdot v^{(0)}_i=\sum_{j=1}^3\bar{\xi}_{ij}y_{j}=0$ for $i=1,2,3$. Finally we also have $u\perp v^{(-1)}$, since for any $\varphi\in C^0_c(\Omega\times \R^3_y)$
\begin{align*}
	&\int \varphi(x,y)\langle (e_1\wedge \eps_1 +e_2\wedge \eps_2 +e_3\wedge \eps_3),\eta_1\rangle\,d|T|(x,y) \\
	=\,& 
	\lim_{j\to\infty} \int \varphi(x,y)\langle (e_1\wedge \eps_1 +e_2\wedge \eps_2 +e_3\wedge \eps_3),\eta_1^j\rangle \,d|T^j|(x,y) \,=\, 0
\end{align*}
by \eqref{propL1} and \eqref{eq:v-1}. 

Therefore $u \mapsto u\cdot A_y u$ is (strictly) convex and thus also the right-hand side in \eqref{eq:energy-003} is convex. This implies
\begin{align}
	\frac{1}{4}H^2(x,y^\perp) - \frac{1}{6}K(x,y^\perp) \,&\leq\, \frac{\beta(x,y)}{\gamma(x)}\Big(\frac{1}{4}\big(\xi_0(x,y):{\xi}_1(x,y)\big)^2 - \frac{1}{6}\tr \cof {\xi}_1(x,y)\Big) + \notag\\
	&\qquad + \frac{\beta(x,-y)}{\gamma(x)}\Big(\frac{1}{4}\big(\xi_0(x,y):{\xi}_1(x,-y)\big)^2 - \frac{1}{6}\tr \cof {\xi}_1(x,-y)\Big). \label{eq:energy-004}
\end{align}
We therefore deduce for any $\Ha^2$-measurable unit field $y=y(x)$ with $y(x)\perp T_x S$ for $\Ha^2$-almost all $x\in S$ that
\begin{align*}
	&\int \Big(\frac{1}{4}H^2 - \frac{1}{6}K\Big)dV \\
	=\,& \int_S \Big(\frac{1}{4}H^2(x,T_xS) - \frac{1}{6}K(x,T_xS)\Big)\gamma(x) d\Ha^2(x)\\
	\leq\, & \int_S \frac{\beta(x,y(x))}{\gamma(x)}\Big(\frac{1}{4}\big(\xi_0(x,y(x)):{\xi}_1(x,y(x))\big)^2 - \frac{1}{6}\tr \cof {\xi}_1(x,y(x))\Big)\gamma(x)\,d\Ha^2(x) + \notag\\
	&\qquad + \int_S \frac{\beta(x,-y(x))}{\gamma(x)}\Big(\frac{1}{4}\big(\xi_0(x,y(x)):{\xi}_1(x,-y(x))\big)^2 - \frac{1}{6}\tr \cof {\xi}_1(x,-y(x))\Big)\gamma(x)\,d\Ha^2(x) \\
	{\leq}\,& \int_{G^*} \Big(\frac{1}{4}\big(\xi_0(x,y):{\xi}_1(x,y)\big)^2 - \frac{1}{6}\tr \cof {\xi}_1(x,y)\Big) |\eta_0(x,y)|\beta(x,y)\,d\Ha^2(x,y)\\
	\leq\, &  \int_{G^*} \tilde{f}\big(x,y,\xi_1(x,y)) |\eta_0(x,y)|\beta(x,y)\,d\Ha^2(x,y)\\
	\leq\, & \liminf_{j\to+\infty} \int_{S_j} Q(L_j(p))\,d\mathcal H^2(p),
\end{align*}
where we have used \eqref{eq:energy-001} and \eqref{eq:energy-000} in the last two estimates.

It remains to show that we can identify the limit $\mu$ in \eqref{eq:conv-Chi} with the mass measure $\mu_{V_T}$ of the varifold $V_T$ associated to $T$. We first obtain for any $\psi\in C^0_c(\Omega)$ by the co-area formula and by \eqref{eq:convT0j}
\begin{align}
	\mu(\psi) \,&=\, \lim_{j\to\infty} \int_{S_j} \psi(x)\,d\Ha^2(x) \,=\, \lim_{j\to\infty} \int_{G_j} \psi(x)|\eta_0^j|(x,y)\,d\Ha^2(x,y) \notag \\
	&=\, \int_G \psi(x) \beta(x,y)|\eta_0|(x,y)\,d\Ha^2(x,y). \label{eq:mu-ident}
\end{align}
We claim that $|\eta_0|$ is the Jacobian of the projection $\pi_1:G\to\R^3_x$. As above, by \cite[Theorem 2.10]{AnST90} we can choose for $\Ha^2$-almost every $(x,y)\in G^*$ an embedded $C^1$ surface $\Sigma\subset\R^3$ and a $C^1$ map $\zeta: \Sigma\to S^2$ with \eqref{eq:repr-form}. 
In particular we then have
\begin{align*}
	\frac{1}{|\eta_0|}(x,y)=|\xi|(x,y)\,=\, |J(\Id\otimes \zeta)|(x)\,=\, \frac{1}{|J\pi_1|}(x,y), 
\end{align*}
see Lemma \ref{lem:xi}. Using \eqref{eq:mu-ident} and recalling the definition of $V_T$ from Section \ref{sec:CV-GG} we arrive at
\begin{align*}
	\mu(\psi) \,&=\, \int_G \psi(x) \beta(x,y)|\eta_0|(x,y)\,d\Ha^2(x,y) \,=\, \int_S \psi(x) \gamma(x)\,d\Ha^2(x) \,=\, \mu_{V_T}(\psi).
\end{align*}
\end{proof}
%
%
\begin{appendix}
%
\section{Origin of the energy $\Qfu_\e$ and results from \cite{LuPR14}}
Here we briefly describe the origin of the energy $\Qfu_\e$ considered in this paper and recall some properties from \cite{LuPR14}.

Biomembranes are formed by lipid molecules that self-assemble into thin bilayer structures. In \cite{PeRoe09} a mesoscale model was introduced that prescribes an energy for idealized and rescaled head and tail densities; such a model arises from a micro-scale description in which heads and tails are treated as separate particles. Evaluated in density functions $u,v$ of tail and head particles, respectively, the energy takes the form 
$$
\mathcal F_\e(u,v):=\left\{\begin{array}{ll}\displaystyle \int |\nabla u|+\frac{1}{\e}d_1(u,v), & \textrm{if $(u,v)\in\mathcal K_\e$}\\
+\infty, & \textrm{otherwise in $L^1(\R^n)\times L^1(\R^n)$,}
\end{array}\right.
$$
where $\int |\nabla u|$ is the total variation of $u$, $d_1(u,v)$ denotes the Monge-Kantorovich distance between $u$ and $v$ and for any $\e>0$ and fixed total mass $M_T>0$ we have set 
$$
	\K_\e:=\bigg\{(u,v)\in BV(\R^n;\{0,\e^{-1}\})\times  L^1(\R^n;\{0,\e^{-1}\}) : \int u =\int v=M_T,\,\textrm{$uv=0$ a.e.\,in $\R^n$}\bigg\}.
$$
It was shown in \cite{PeRoe09} that this energy favors structures where the $u$ mass is organized in thin layers of thickness $2\eps$, surrounded by two $v$ layers. More precisely, it was proved in \cite{PeRoe09} that the rescaled energy functional 
$$
\mathcal G_\e:=\frac{\mathcal F_\e -2M_T}{\e^2}
$$
in two space dimensions Gamma-converges to a generalized Euler elastica energy. In \cite{LuPR14} an analysis of the three-dimensional case has been started. In Theorem 2.1 of that paper a lower estimate for $F_\e(u,v)$ was proved. This estimate is given in terms of the boundary of the set $\{u>0\}$ and the ray directions $\theta$ associated to the Monge--Kantorovich mass transport problem and takes the following form: Consider a smooth connected compact surface $S$ that is part of the boundary of $\{u>0\}$, let $\nu$ denote the inner unit normal field of $S$ and consider the Lipschitz vector field $\theta=\nabla\phi:\R^3\to\R^3$, $|\theta|=1$ that describes the ray direction. Then
there exists a nonnegative measurable function $M\colon S \to \R$ such that $\theta \cdot \nu >0$ everywhere on $\{M>0\}$, and such that
\begin{equation}\label{estfond}
\mathcal G_\e(u,v) \geq  \frac{1}{\e^2}\int_S(M-1)^2\,d\mathcal  H^2+\frac{1}{\e^2}\int_S\bigg(\frac{1}{\theta \cdot \nu}-1\bigg)M^2\,d\mathcal  H^2+\int_S\frac{M^4}{(\theta \cdot \nu)^3}Q(D\theta)\,d\mathcal  H^2,
\end{equation}
with $Q$ as defined in \eqref{eq:def-Q}. Considering a sequence $(u_\eps,v_\e)$ with uniformly bounded energy we thus deduce that the mass distribution functions $M_\eps$ have to approach $1$, and that the ray directions have to become orthogonal to the boundary surfaces as $\eps$ tends to zero.

This estimate suggests that the Gamma-limit of $\mathcal G_\e$ (with respect of convergence of $u_\eps$ as measures) is given -- for limit measures given by a sufficiently regular surface $S$ equipped with unit density -- by
\begin{equation}\label{eq:def-G0-app}
	\mathcal G_{0}(S) =\, 2\int_{S}  \bigg(\frac{1}{4}H^2-\frac{1}{6}K\bigg)\, d\mathcal H^2
\end{equation}
where $H$ and $K$ are, respectively, the mean curvature and the Gauss curvature of $S$.

The corresponding upper estimate for $\mathcal G_\e$ has been proved in \cite[Theorem 1.5]{LuPR14}. The corresponding $\liminf$ estimate is much harder to obtain. The contribution of this paper is a major step in this direction: The functional $\Qfu_\e$ exactly corresponds to the right-hand side of \eqref{estfond}, when we restrict ourselves to constant mass $M_\eps\equiv 1$ and just one connected component $S_\e$ of the boundary $\partial\{u_\eps>0\}$. In this sense we have addressed here the deviation of the director field from the normal, but have neglected an additional deviation in the mass distribution on the surfaces. 

We finally restate a Lemma on the quadratic form $Q$ in \eqref{eq:def-Q} that we have used in the current paper. 

\begin{lemma}\label{lem:evDtheta}\cite[Lemma 3.6]{LuPR14}
For $\Ha^2$-almost all $p\in E$, $D\theta(p)$ is diagonalizable, and there exists a positively oriented orthonormal basis $\{v_1,v_2,\theta(p)\}$ of eigenvectors with $\det(v_1,v_2,\theta(p))=1$ and eigenvalues $\evla,\evla$ such that
\begin{align*}
	D\theta(p) v_1 = \evla v_1,\quad D\theta(p) v_2 = \evmu v_2, \quad D\theta(p)\theta(p) = 0.
\end{align*}
Moreover, we have
\begin{align}
	\tr D\theta \,&=\, \evla + \evmu,\qquad \tr \cof D\theta \,=\, \evla\evmu, \notag\\
	Q(D\theta) \,&=\, \frac{1}{4}(\evla+\evmu)^2-\frac{1}{6}\evla\evmu = \frac{1}{6}(\evla+\evmu)^2+\frac{1}{12}(\evla^2+\evmu^2). \label{eq:evDtheta}
\end{align}
\end{lemma}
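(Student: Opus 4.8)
The plan is to reduce the entire statement to the spectral decomposition of the symmetric matrix $D\theta(p)$, exploiting that the unit constraint $|\theta|=1$ forces $\theta(p)$ into its kernel; after this, every assertion is either the spectral theorem or a one-line algebraic identity. I would organize the argument in three steps: (i) identify the eigenstructure $\{v_1,v_2,\theta(p)\}$ with eigenvalues $\lambda_1,\lambda_2,0$; (ii) read off $\tr D\theta$ and $\tr\cof D\theta$ from the characteristic polynomial; (iii) substitute into the definition \eqref{eq:def-Q} of $Q$ and verify the two expressions in \eqref{eq:evDtheta}.

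For step (i), the symmetry of $D\theta(p)$ is automatic here, since $\theta=\nabla\phi$ and hence $D\theta=D^2\phi$ is a Hessian, symmetric at $\Ha^2$-almost every $p\in E$; the spectral theorem then yields diagonalizability. Differentiating the identity $|\theta|^2\equiv 1$ gives $(D\theta)^T\theta=0$, and symmetry upgrades this to $D\theta(p)\theta(p)=0$, so $\theta(p)$ is a null eigenvector. Applying the spectral theorem to the restriction of $D\theta(p)$ to the invariant subspace $\theta(p)^\perp$ produces an orthonormal pair $v_1,v_2$ with real eigenvalues $\lambda_1,\lambda_2$; after possibly interchanging the two vectors or reversing a sign I arrange $\det(v_1,v_2,\theta(p))=1$, giving the positively oriented orthonormal eigenbasis claimed.

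For step (ii), the coefficients of the characteristic polynomial of $D\theta(p)$ are the elementary symmetric functions of its eigenvalues $\lambda_1,\lambda_2,0$, so $\tr D\theta=\lambda_1+\lambda_2$, $\det D\theta=0$, and $\tr\cof D\theta=\lambda_1\lambda_2$, the two products involving the zero eigenvalue dropping out. Substituting into \eqref{eq:def-Q} gives $Q(D\theta)=\tfrac14(\lambda_1+\lambda_2)^2-\tfrac16\lambda_1\lambda_2$; expanding this and the rightmost expression in \eqref{eq:evDtheta} shows that both equal $\tfrac14(\lambda_1^2+\lambda_2^2)+\tfrac13\lambda_1\lambda_2$, which settles the two identities.

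Mathematically the lemma presents no substantive obstacle: once $D\theta(p)$ is known to be symmetric and to annihilate $\theta(p)$, all claims follow from the spectral theorem and a short expansion. The only point outside linear algebra worth isolating at the outset is the measure-theoretic one — that the pointwise matrix $D\theta(p)=D^2\phi(p)$ exists and is symmetric for $\Ha^2$-almost every $p\in E$ — which holds because $\phi\in C^{1,1}$ has a symmetric Hessian wherever the genuine second derivative exists; this is the step I would treat most carefully, the remainder being mechanical.
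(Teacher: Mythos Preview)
Your proof is correct and follows the natural route: symmetry of $D\theta=D^2\phi$ (from $\theta=\nabla\phi$ with $\phi\in C^{1,1}$), the kernel relation $D\theta\,\theta=0$ from differentiating $|\theta|^2=1$, then the spectral theorem and elementary symmetric functions of the eigenvalues $\lambda_1,\lambda_2,0$. The paper itself does not supply a proof of this lemma; it merely restates \cite[Lemma~3.6]{LuPR14}, so there is nothing to compare against beyond noting that your argument is the expected one.
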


%
%
\section{Exterior algebra and currents}\label{app:ext-algebra}
Denote by $\Lambda^k(\R^n),\,0\leq k\leq n$ the space of all {\it $k$-vectors} in $\R^n$, where we identify $\Lambda^1(\R^n)$ with $\R^n$ and set $\Lambda^0(\R^n):=\R$. If we denote by $\{e_1,\dots,e_n\}$ the standard basis of $\R^n$
then $\{e_i\wedge e_j\,:\, 1\leq i<j\leq n\}$ defines the standard basis of $\Lambda^2(\R^n)$. The euclidean scalar product of two $2$-vectors $v=\sum_{1\leq i<j\leq n}
\alpha_{ij}e_i\wedge e_j$, $w=\sum_{1\leq i<j\leq n}
\beta_{ij}e_i\wedge e_j$ is given by 
$$
	(v,w) \,:=\, \sum_{1\leq i<j\leq n} 
	\alpha_{ij}\beta_{ij}
$$ 
and the induced euclidean norm is denoted by $|\cdot|$.
We call $v$ a {\it simple $2$-vector} if $v$ can be written as $v=v_1\wedge v_2$. If in addition $v\neq 0$ the space $\spann(v_1,v_2)$ is called the {\it enveloping subspace}. We observe that
we have a one-to-one correspondence between simple two-vectors with unit norm and the Grassmann manifold of all oriented two-dimensional subspaces of $\R^n$. Another useful operation between vectors is the {\it interior multiplication} denoted by $\mres$ and defined as follows: if $v\in \Lambda^k(\R^n)$ and $w\in\Lambda^h(\R^n)$ with $k \ge h$ the vector $v\mres w$ belongs to $\Lambda^{k-h}(\R^n)$ and $\langle v\mres w,u\rangle=\langle v,w\wedge u\rangle$ holds true for any $u\in \Lambda^{k-h}(\R^n)$. We finally recall the definition of {\it Hodge operator}. We restrict to the case $p\in \{1,\dots,n-1\}$: $\ast\colon \Lambda^p(\R^n) \to \Lambda ^{n-p}(\R^n)$ is linear and defined starting from 
$$
\ast(e_1\wedge \cdots \wedge e_p):=\pm e_{p+1}\wedge \cdots \wedge e_n
$$
where we take ``$+$'' if the basis $\{e_1,\dots,e_n\}$ is positive oriented and ``$-$'' otherwise.

The space of all {\it $k$-covectors} in $\R^n$ is denoted by $\Lambda_k(\R^n)$. The standard dual basis of $\{e_1,\dots,e_n\}$ is denoted by $\{dx^1,\dots,dx^n\}$.
The euclidean scalar product of two $2$-covectors $\omega=\sum_{1\leq i<j\leq n}
\alpha_{ij}dx^i\wedge dx^j$, $\beta=\sum_{1\leq i<j\leq n}
\beta_{ij}dx^i\wedge dx^j$ is given by 
$$
	(\omega,\beta) \,:=\, \sum_{1\leq i<j\leq n} 
	\alpha_{ij}\beta_{ij}
$$ 
and the induced euclidean norm is denoted by $|\cdot|$.
The space $\Lambda_k(\R^n)$ is the dual of $\Lambda^k(\R^n)$. 

\subsection{Currents}
Let  $k \in \{0,\dots,n\}$. For $U\subseteq \R^n$ open, a {\it $k$-differential form} on $U$ is a map 
$$
\omega \colon U \to \Lambda_k(\R^n),\quad \omega(x)=\sum_{1\le i_1< \cdots <i_k\le n}\omega_{i_1\cdots i_k}(x)dx^{i_1}\wedge \cdots \wedge dx^{i_k}
$$
with $\omega_{i_1\cdots i_k}$ of class $C^\infty(U)$. We denote by $\mathcal D^k(U)$ the space of all $k$-differential forms with compact support in $U$, equipped with usual topology of distributions. We denote by $\|\omega\|:=\sup_{x\in U} |\omega(x)|$ the supremum norm.\\
For $\omega\in \mathcal D^k(V)$, $V\subset \R^m$ open, $\omega=\sum_{1\leq i_i<\dots<i_k\leq m} 
\omega_{i_1\dots i_k} dy^{i_1}\wedge\dots\wedge dy^{i_k}$ and $f \in C^\infty(U;V)$ we define the \emph{pullback} $f^\sharp \omega \in \mathcal D^k(U)$ by
$$
f^\sharp \omega(x):=\sum_{1\le i_1<\cdots<i_k\le m}
\omega_{i_1\cdots i_k}(f(x))df_x^{i_1}\wedge \cdots \wedge df_x^{i_k}, \quad \text{ for all } x\in U
$$
where for any scalar field $g$ on $\R^n$ the differential $dg_x$ is the 1-form defined by 
$$
dg_x:=\sum_k\frac{\partial g}{\partial x_k}dx^k.
$$

The space $\mathcal D_k(U)$ of {\it $k$-currents on $U$} is the dual of $\mathcal D^k(U)$. The  {\it boundary} $\partial T\in \mathcal D_{k-1}(U)$ of $T \in \mathcal D_k(U)$ is defined by  
$$
\langle \partial T,\omega\rangle:=\langle T,d\omega\rangle \quad \text{ for all } \omega \in \mathcal D^{k-1}(U).
$$
Let $W \subset U$ be open. The {\it mass} of $T\in \mathcal D_k(U)$ in $W$ is given by
$$
{\mathbb M}_W(T):=\sup\{\langle T,\omega\rangle : \omega \in \mathcal D^k(U),\,{\rm spt}\,\omega \subset\subset W,\,\|\omega\|\leq 1\}.
$$
We define the Radon measure $\mu_T$ on $U$ by
$
\mu_T(W):={\mathbb M}_W(T)
$
for $W\subset U$ open. The {\it support} of $T$, denoted by ${\rm spt}\,T$, is given by the support of the measure $\mu_T$. Let $T \in \mathcal D_k(U)$, $V\subset \R^m$ open, and $f\in C^\infty(U;V)$ {\it proper}, that is $f^{-1}(K)\cap {\rm spt}\,T$ is compact in $U$ whenever $K$ is compact in $V$. We then define $f_\sharp T \in  \mathcal D_k(V)$ by
$$
\langle f_\sharp T,\omega\rangle:=\langle T,\zeta f^\sharp \omega\rangle, \quad \text{ for all } \omega \in \mathcal D^k(V),
$$
where $\zeta$ is any function $C^\infty_c(U)$ such that $\zeta=1$ in a neighbourhood of ${\rm spt}\,T \cap {\rm spt}\,f^\sharp\omega$; we remark that $f_\sharp T$ does not depend on the choice of $\zeta$. 

\subsection{Auxiliary results}
For $x,y\in\R^3$ we have
\begin{align}
	(x\wedge y)_{ij} \,&=\, (x\times y)_k \eps_{ijk}\quad\text{ for all }1\leq i<j\leq 3. \label{eq:aux1}
\end{align}
\begin{lemma}\label{lem:LA1}
Let $A,B\in\R^{3\times 3}$, $y\in\R^3$ with $|y|=1$ and $B_{ij}:=\sum_k\eps_{ijk}y_k$. Then
\begin{align}
	y\cdot \cof A y \,&=\, \tr\cof BA, \label{eq:lemLA-1}\\
	y\cdot \cof A y \,&=\, \sum_k \tr\cof (\sum_r(B_{ir}A_{rj}y_k +B_{ir}A_{rk}y_j))_{ij}. \label{eq:lemLA-2}
\end{align}
If additionaly $Ay=0$ and $A^Ty=0$ then we have
\begin{align}
	y\cdot \cof A y \,&=\, \tr\cof A, \label{eq:lemLA-3}\\
	y\cdot \cof A y \,&=\, \sum_k \tr\cof (-A_{ij}y_k - A_{ik}y_j)_{ij}. \label{eq:lemLA-4}
\end{align}
\end{lemma}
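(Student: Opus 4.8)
The plan is to reduce the four identities to two ingredients: the explicit form of $\cof B$, and one purely algebraic minor--expansion identity for matrices of the special shape appearing on the right--hand sides.

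First I would observe that $B$ is the skew--symmetric matrix representing the cross product with $y$. Indeed $B_{ij}=\sum_k\eps_{ijk}y_k$ is skew, and a direct evaluation gives $By=0$ and $y^TB=0$; computing the three $2\times 2$ minors of $B$ shows $\cof B=yy^T$. Since the cofactor map is multiplicative, $\cof(BA)=\cof B\,\cof A=yy^T\cof A$, and taking traces together with the cyclic identity $\tr(yy^T\cof A)=y^T\cof A\,y$ gives \eqref{eq:lemLA-1} at once.

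Next I would establish the combinatorial identity behind \eqref{eq:lemLA-2} and \eqref{eq:lemLA-4}: for arbitrary $C\in\R^{3\times 3}$ and $|y|=1$,
$$\sum_k\tr\cof\big(C_{ij}y_k+C_{ik}y_j\big)_{ij}=\tr\cof C+(\tr C)(y^TCy)-(C^Ty)\cdot(Cy).$$
This is proved exactly in the style of the computation in Lemma \ref{lem:HutchK}: write $\tr\cof$ of a $3\times 3$ matrix as the sum of its three principal $2\times 2$ minors, expand each minor of $(C_{ij}y_k+C_{ik}y_j)_{ij}$, and sum over $k$ using $\sum_k y_k^2=1$. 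The pure $y_ay_b$ terms cancel in each minor, the terms carrying the factor $y_k^2$ reassemble into $\tr\cof C$, and the surviving cross terms collect into the displayed correction once one rewrites the coefficient of each $(Cy)_i$ as $(\tr C)y_i-(C^Ty)_i$. In particular the correction vanishes whenever $C^Ty=0$ and $y^TCy=0$. With this in hand, \eqref{eq:lemLA-2} follows by taking $C=BA$: since $y^TB=0$ and $By=0$ one has $C^Ty=A^TB^Ty=0$ and $y^TCy=(B^Ty)^T(Ay)=0$, so the left--hand side of \eqref{eq:lemLA-2} equals $\tr\cof(BA)=y\cdot\cof A\,y$ by \eqref{eq:lemLA-1}.

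It remains to treat the case $Ay=0$, $A^Ty=0$. For \eqref{eq:lemLA-3} I would pass to an orthonormal basis $\{u_1,u_2,y\}$: the hypotheses force the last row and column of $A$ to vanish, so $A$ becomes block--diagonal with a $2\times 2$ block $\tilde A$, and a direct cofactor computation shows that both $\tr\cof A$ and $y\cdot\cof A\,y$ equal $\det\tilde A$; the reduction is legitimate because $\tr\cof$ and $y\mapsto y\cdot\cof(\cdot)y$ are invariant under orthogonal change of basis, as $\cof Q=\det(Q)Q$ for orthogonal $Q$ makes $\cof$ equivariant under orthogonal similarity. Finally \eqref{eq:lemLA-4} follows from the combinatorial identity with $C=A$: now $Ay=0=A^Ty$ gives $Cy=C^Ty=0$, so the correction vanishes and the sum equals $\tr\cof A=y\cdot\cof A\,y$ by \eqref{eq:lemLA-3}; here I also use $\tr\cof(-M)=\tr\cof M$, since $\cof$ is homogeneous of degree two, to absorb the sign in $-A_{ij}y_k-A_{ik}y_j$. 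The main obstacle I expect is purely the bookkeeping in the combinatorial identity, namely tracking all cross terms in the three expanded $2\times 2$ minors and verifying that they reassemble into the compact correction $(\tr C)(y^TCy)-(C^Ty)\cdot(Cy)$; everything else is a short direct computation or a formal consequence of multiplicativity and orthogonal equivariance of the cofactor.
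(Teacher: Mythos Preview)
Your argument is correct. For \eqref{eq:lemLA-1} you and the paper do the same thing, namely use $\cof B=yy^T$ and multiplicativity of $\cof$. For the remaining three identities your route differs from the paper's. The paper handles \eqref{eq:lemLA-2} and \eqref{eq:lemLA-4} by separate brute-force expansions, and proves \eqref{eq:lemLA-3} via the rank-one update formula $\det(M+yy^T)=\det M+y\cdot\cof M\,y$ together with the Cayley--Hamilton identity for $3\times3$ matrices. You instead isolate a single auxiliary identity
\[
\sum_k\tr\cof\big(C_{ij}y_k+C_{ik}y_j\big)_{ij}=\tr\cof C+(\tr C)(y^TCy)-(C^Ty)\cdot(Cy),
\]
which is correct (the $y_iy_j$ cross terms in each principal $2\times2$ minor do cancel, and the remaining cross terms reassemble as claimed), and then specialise to $C=BA$ and $C=A$ to get \eqref{eq:lemLA-2} and \eqref{eq:lemLA-4} from \eqref{eq:lemLA-1} and \eqref{eq:lemLA-3} respectively; this is more conceptual and explains \emph{why} the two computations come out the same way. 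Your proof of \eqref{eq:lemLA-3} by passing to an orthonormal frame $\{u_1,u_2,y\}$ is also cleaner than the paper's: it avoids Cayley--Hamilton and the external reference entirely, at the small cost of invoking orthogonal equivariance of $\cof$. The paper's approach, on the other hand, stays coordinate-free for \eqref{eq:lemLA-3} and does not require verifying the auxiliary identity above, so it is shorter if one grants the two cited matrix facts.
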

\begin{proof}
Since $(\cof B)_{ij}=y_iy_j$ we have 
$$
\tr\cof BA=\sum_{i,k}(\cof A)_{ik}(\cof B)_{ki}=\sum_{i,k}(\cof A)_{ik}y_iy_k=y\cdot \cof A y
$$
which proves \eqref{eq:lemLA-1}. Formula \eqref{eq:lemLA-2} follows by direct computation. For any $k=1,2,3$ let $C^k\in\R^{3\times 3}$ with entries 
$$
C^k_{ij}:=\sum_rB_{ir}(A_{rj}y_k+A_{rk}y_j).
$$
Then 
$$
C^k=\left(\begin{array}{ccc}0&y_3&-y_2\\
-y_3&0&y_1\\
y_2&-y_1&0\end{array}\right)\left(\begin{array}{ccc}A_{11}y_k+A_{1k}y_1 & A_{12}y_k+A_{1k}y_2 & A_{13}y_k+A_{1k}y_3\\
A_{21}y_k+A_{2k}y_1 & A_{22}y_k+A_{2k}y_2 & A_{23}y_k+A_{2k}y_3\\
A_{31}y_k+A_{3k}y_1 & A_{32}y_k+A_{3k}y_2 & A_{33}y_k+A_{3k}y_3\end{array}\right).
$$
By straightforward calculations we obtain, using $y_1^2+y_2^2+y_3^2=1$, 
$$
\sum_k \tr\cof C^k=\sum_{i,j}(\cof A)_{ij}y_iy_j=y\cdot \cof A y.
$$
Now assume that $Ay=0$ and $A^Ty=0$. In order to prove \eqref{eq:lemLA-3} we need two general results  from the theory of matrices. First of all the equality
\begin{equation}\label{for-matrix}
\det(M+yy^T)=\det M+y\cdot \cof My
\end{equation}
holds true for any $M\in\R^{n\times n}$ and for any $y\in \R^n$ (see \cite[Prop.\,3.21]{Serr10}); moreover, the Cayley-Hamilton Theorem for a matrix $N\in\R^{3\times 3}$ says that 
\begin{equation}\label{for-matrixBIS}
\det N=\frac{(\tr N)^3-3\tr N \tr (N^2)+2\tr (N^3)}{6}.
\end{equation}
Notice that since $Ay=0$ and $y\ne 0$ we must have $\det A=0$, and therefore in order to prove \eqref{eq:lemLA-3}, taking into account \eqref{for-matrix}, it is sufficient to show that 
\begin{equation}\label{for-matrixTRIS}
\det(A+yy^T)=\tr\cof A.
\end{equation}
Since $Ay=A^Ty=0$ we easily get
$$
\tr ((A+yy^T)^2)=\tr (A^2)+\tr (yy^Tyy^T)=\tr(A^2)+1
$$
and 
$$
\tr ((A+yy^T)^3)=\tr (A^3)+\tr (yy^Tyy^Tyy^T)=\tr(A^3)+1
$$
Using \eqref{for-matrixBIS} we therefore obtain, since $\det A=0$, 
$$
\begin{aligned}
\det(A+yy^T)&=\frac{(\tr A+1)^3-3(\tr A+1) \tr ((A+yy^T)^2)+2\tr ((A+yy^T)^3)}{6}\\
&=\frac{(\tr A)^2-\tr (A^2)}{2}\\
&=(\cof A)_{11}+(\cof A)_{22}+(\cof A)_{33}
\end{aligned}
$$
which completes the proof of \eqref{for-matrixTRIS}. Finally, \eqref{eq:lemLA-4} follows by direct computation:  we easily have, for any $i=1,2,3$, using $y_1^2+y_2^2+y_3^2=1$ and $Ay=A^Ty=0$, 
$$
\sum_k\cof\left(\begin{array}{ccc}-A_{11}y_k-A_{1k}y_1 & -A_{12}y_k-A_{1k}y_2 & -A_{13}y_k-A_{1k}y_3\\
-A_{21}y_k-A_{2k}y_1 & -A_{22}y_k-A_{2k}y_2 & -A_{23}y_k-A_{2k}y_3\\
-A_{31}y_k-A_{3k}y_1 & -A_{32}y_k-A_{3k}y_2 & -A_{33}y_k-A_{3k}y_3\end{array}\right)_{ii}=(\cof A)_{ii}
$$
and then \eqref{eq:lemLA-4} follows.
\end{proof}

\end{appendix}
%

\end{document}